\numberwithin{equation}{section}
\newtheorem{theorem}{Theorem}[section]
\newtheorem{assumption}{Assumption}
\newtheorem{corollary}[theorem]{Corollary}
\newtheorem{definition}[theorem]{Definition}
\newtheorem{lemma}[theorem]{Lemma}
\newtheorem{proposition}[theorem]{Proposition}
\newtheorem{remark}[theorem]{Remark}
\newtheorem{algorithm}{Algorithm}
\def\neweq#1{\begin{equation}\label{#1}}
\def\endeq{\end{equation}}
\newcommand{\R}{\mathbb{R}}
\renewcommand{\div}{{\rm div}}
\newcommand{\om}{\Omega}
\newcommand{\omdel}{\Omega_{\varrho}}
\newcommand{\omdd}{\Omega\setminus{D}}
\newcommand{\omd}{\Omega_D}
\newcommand{\eps}{\epsilon}
\DeclareMathOperator*{\argmin}{arg\,min}
\newcommand{\proj}{\ensuremath{\mathrm{proj}}} 
\newcommand{\Jade}{J_{\delta,\varepsilon}}
\begin{document}

\title{On the reconstruction of cavities in a nonlinear model arising from cardiac electrophysiology }

\author{Elena Beretta \footnote{Division of Science, NYU Abu Dhabi, \texttt{eb147@nyu.edu}}\ ,
		M. Cristina Cerutti \footnote{Dipartimento di Matematica, Politecnico di Milano , \texttt{cristina.cerutti@polimi.it}}\ ,
	Dario Pierotti, \footnote{Dipartimento di Matematica, Politecnico di Milano , \texttt{dario.pierotti@polimi.it}}\,
	Luca Ratti\footnote{Machine Learning Genoa Center, Department of Mathematics, University of Genoa,  \texttt{luca.ratti@unige.it}}}

\date{ }

\maketitle

\begin{abstract}
In this paper we deal with the problem of determining perfectly insulating regions (cavities) from one boundary measurement in a nonlinear elliptic equation arising from cardiac electrophysiology. 
Based on the results obtained in \cite{BCP} 
we propose a new reconstruction algorithm 
based on $\Gamma$-convergence.
 The relevance and applicability of this approach is then shown through several numerical experiments. 
\end{abstract}
\vskip.15cm

{\sf Keywords.} cardiac electrophysiology; nonlinear elliptic equation; inverse problem; reconstruction; $\Gamma$-convergence.
\vskip.15cm

{\sf 2010 AMS subject classifications.}
35J25, ( 35J61 35N25, 35J20, 92C50)\ \\

\section{Introduction}

In this paper we tackle the inverse problem of reconstructing a cavity $D$ within a planar domain $\Omega$ taking advantage of boundary measurements of the solution of the following boundary value problem:
\begin{equation}\label{cavityproblem}
    \left\{
    \begin{aligned}
        -\Delta u + u^3 &= f \qquad & \text{in } \Omega \setminus D \\
        \frac{\partial u}{\partial\mathbf n} &= 0 & \text{on } \partial \Omega \cup \partial D.
    \end{aligned}
    \right.
\end{equation}
The investigation of this problem is mainly motivated by the mathematical modelling of the electrical activity of the heart regarding, in particular, the detection of ischemic regions from boundary measurements of the transmembrane potential, \cite{BCP}. These regions are composed of non-excitable tissue, that can be modeled as an electrical insulator (cavity) \cite{Perez}, \cite{Relan},\cite{Fronteraetal}. Identification of ischemic regions and their shape is fundamental to perform successful radiofrequency ablation for the prevention of tachycardias and of more serious heart disease. In the steady-state case the transmembrane potential in the presence of an ischemic region satisfies exactly Problem (\ref{cavityproblem}). Hence, mathematically, the inverse problem boils down in determining a cavity $D$ from boundary measurements of the solution $u$. In \cite{BCP} part of the authors analyzed the well-posedness of (\ref{cavityproblem}) and uniqueness of the inverse problem under minimal regularity assumptions on the unknown cavity. More precisely, they proved that one measurement of the potential $u$ on an open arc of $\partial\Omega$ is enough to detect uniquely a finite union of disjoint, compact, simply connected sets with Lipschitz boundary.
The inverse problem is highly nonlinear and severly ill-posed since, as for the linear conductivity problem, even within a class of smooth cavities only a very weak logarithmic-type continuous dependence on data is expected to hold , see \cite{AMR}, \cite{ABRV}. 

In \cite{BRV} the authors analyzed the mathematical model in the case of conductivity inhomogeneities of arbitrary shape and size in the two-dimensional setting. In particular, the issue of reconstructing the inhomogeneity from boundary measurements was addressed. The strategy used in \cite{BRV} for the reconstruction from few data was based on the minimization of a quadratic mismatch functional with a perimeter penalization term. In order to derive a more manageable problem, the perimeter functional was relaxed using a phase-field approach justified by showing the $\Gamma$-convergence of the relaxed functional to the functional with perimeter penalization.  In recent years this kind of approach has been successfully implemented in inverse boundary value problems for partial differential equations and systems,  see for example \cite{BRV},\cite{BZ}, \cite{R}, \cite{LY}, \cite{DES}.\\
Here, we use a similar approach starting from the minimization of the following quadratic boundary misfit functional with a Tikhonov regularization penalizing the perimeter of the set $D$: 
\begin{equation}
\label{perfunct_intro}J(D) = \frac{1}{2} \int_{\Sigma}(u(D) - u_{meas})^2d\sigma+\alpha \textrm{Per}(D)
\end{equation}
where $\alpha>0$ represents the regularization parameter, $u_{meas}$ the measurements corresponding to some solution of (\ref{cavityproblem}). Assuming uniform Lipschitz regularity of the cavity $D$, we prove continuity of solutions to (\ref{cavityproblem})  with respect to $D$ in the Hausdorff metric and, as a consequence, the existence of minima in the class of Lipschitz cavities $D$, showing the stability of the functional with respect to noisy data and the convergence of minimizers as $\alpha\rightarrow 0$ to the solution of the inverse problem.
\\
In the linear counterpart of the problem, it is natural to interpret cavities as perfectly insulating inclusions, namely regions in which the conductivity of the medium is vanishing. This scenario (together with the case of perfectly conducting inclusions, where the conductivity goes to infinity) is usually referred to as \textit{extreme} conductivity inclusions. For this reason, it is natural to interpret the cavity problem under exam as the limit case of the inclusion detection, and hence to approximate it by means of inclusion detection problems associated with very low conductivities $\delta$. This entails the introduction of an approximation of the forward problem \eqref{cavityproblem}, leading to a solution map $u_\delta$ and to the corresponding functional $J_{\delta}$ and minimization problem (see \eqref{mindelta}). 
Since the functional is not differentiable and its minimization is conducted in a non-convex space, we propose as in \cite{BRV} a Modica-Mortola relaxation of the functional $J_{\delta}$ via a family of smooth functionals $J_{\varepsilon,\delta}$ defined on a suitable subset of $H^1(\Omega)$ to guarantee $\Gamma-$ convergence as $\varepsilon\rightarrow 0$ and as $\delta\rightarrow 0$ to the functional $J$. \\ 
This theoretical convergence result motivates the choice to approximate the original regularized problem \eqref{perfunct_intro} by minimizing the functional $J_{\delta,\varepsilon}$ for fixed, small values of $\delta$ and $\varepsilon$. The Fr\'echet differentiability of such functionals ultimately suggests to employ a first-order optimization method to iteratively converge to a critical point, satisfying (necessary) optimality conditions. As further motivated in Section \ref{sec:numerics}, we can sequentially perform the minimization of $J_{\delta,\varepsilon}$ for reducing values of $\varepsilon$ and $\delta$ to obtain a candidate regularized solution of the original cavity detection problem. \\
Nevertheless, there is a gap between the theoretical results and the numerical implementation: in particular, unlike the conductivity case, the phase-field relaxation via $J_{\delta,\varepsilon}$ is not able to mitigate the non-convexity of the original problem. Indeed, as explained above, we must assume that the cavity $D$ is of Lipschitz class. To guarantee such a regularity for the minimizers of the functionals $J_\delta$, and to ensure the $\Gamma-$convergence as $\varepsilon \rightarrow 0$, we are forced to set the minimization of $J_{\delta,\varepsilon}$ in a suitable non-convex subset $\mathcal{K}_{\eta}$ of $H^1(\Omega;[0,1])$. This allows, on the one hand, for a complete and thorough theoretical analysis of the relaxation strategy, but on the other one, it still makes it impossible to minimize $J_{\delta,\varepsilon}$ by means of standard gradient-based schemes. However, numerical evidence shows that it is possible to perform such a minimization on the whole space $H^1(\Omega;[0,1])$ and still have convergence to a function satisfying the desired additional regularity.
\par
From a numerical standpoint, we can compare our strategy with other existing approaches in the literature related with the linear counterpart of the problem, namely the cavity detection in the linear conductivity problem. In such a context, phase-field techniques have been studied for the reconstruction of cavities (and cracks) in the conductivity case in \cite{ring2010reconstruction} and in the elasticity case in \cite{ABCRV} and \cite{A}. \\
Among the several alternative strategies, we can perform a main distinction between algorithms which have been originally developed for inclusion detection and later extended to the cavity case, and algorithms specifically suited for the reconstruction of cavities. 
\\
Regarding the first family, we can trace back to the first algorithm, introduced by Friedman and Vogelius in \cite{friedman1989} for the detection of arbitrarily small (extreme) conductivity inclusions. It is based on the asymptotic expansion of a suitable mismatch functional, and it has been further developed, by means of polarization tensors, by Ammari and Kang in \cite{ammari2007polarization}. Subsequently, many other techniques originally designed for inclusion detection have been extended to the cavity case. For example, the enclosure method, allowing for the reconstruction of the convex hull of inclusions in electrical impedance tomography, has been formulated in the cavity case 
in \cite{ikehata2002numerical}, whereas the factorization method, developed by Br\"uhl and Hanke, has been investigated for the cavity problem in \cite{hanke2003recent}, also comparing it to the MUSIC algorithm. Analogously, the level set method, allowing for the reconstruction an inclusion as a level curve of a suitable function which is iteratively updated, has been successfully and efficiently implemented 
in \cite{burger2003levenberg}. Recently, also the monotonicity method, which exploits the monotonicity of the Dirichlet-to-Neumann map to define an iterative reconstruction algorithm, has been studied in the presence of extreme inclusions, 
see \cite{candiani2020monotonicity}. 
\\
Among the second family of algorithms, namely the ones that are innatily suited for extreme inclusions detection in the linear conductivity equation, we recall the the method of fundamental solutions 
(see \cite{borman2009method}), the algorithm by Kress and Rundell involving nonlinear boundary integral equations (see \cite{kress2005nonlinear}) and the conformal mapping technique (see \cite{kress2012inverse} and \cite{munnier2017conformal}).

\par
The remaining part of the paper is structured as follows: in Section \ref{sec:notation} we set the notation and introduce the main assumptions regarding the forward problem and the class of cavities we aim at reconstructing. Section \ref{sec:direct} is devoted to the analysis of the forward problem \eqref{cavityproblem}, both recalling the well-posedness results from \cite{BCP} and proving a novel result about the continuous dependence of boundary measurements with respect to  cavity perturbations. Section \ref{sec:recon} outlines our approach to the reconstruction of cavities, studying the regularization properties of \eqref{perfunct_intro} and thoroughly describing the phase-field relaxation. It also contains the main theorems of the paper, namely the $\Gamma-$convergence results for the relaxed functionals as $\delta$ and $\varepsilon$ go to $0$. Finally, Section \ref{sec:numerics} provides a numerical counterpart of the proposed strategy, formulating and discussing two optimization algorithms, and in Section \ref{sec:results} we report the results of some numerical experiments, assessing the effectiveness of such algorithms. 

\section{Notation and main assumptions}
\label{sec:notation}

We consider the following inhomogeneous Neumann problem 
\begin{equation}
\label{probcav}
\left\{
  \begin{array}{ll}
    -\Delta u+u^3=f, & \hbox{in $\omdd$} \\
    \displaystyle{\frac{\partial u}{\partial\mathbf{n}}}=0, & \hbox{on $\partial(\omdd)$}.
  \end{array}
\right.
\end{equation}

where $\mathbf{n}$ is the outer unit normal to $\Omega\backslash D$.
In what follows we will use the notation 

$$\omd=\omdd$$

Let us first  recall the definition of Lipschitz (or $C^{0,1}$) regularity.
\begin{definition}
[Lipschitz or $C^{0,1}$ regularity]\ \\\label{lipclass}
Let $\Omega$ be a bounded domain in $\mathbb{R}^2$.  We say that a portion $S$ of $\partial \Omega$ is of class $C^{0,1}$ with constants $r_0$ and $L_0$, if for any ${P}\in S$, there exists a rigid transformation of coordinates under which we have ${P}={0}$ and
	\begin{equation*}
		\mathring{\Omega}\cap B_{r_0}({0})=\{{(x_1,x_2)}\in B_{r_0}({0})\, :\, x_2>\psi(x_1)\},
	\end{equation*}
where ${\psi}$ is a $C^{0,1}$ function on $(-r_0,r_0)\subset \mathbb{R}$ such that
	\begin{equation*}
		\begin{aligned}
			{\psi}({0})&=0,\\
			\|{\psi}\|_{C^{0,1}(B_{r_0}({0}))}&\leq L_0.
		\end{aligned}
	\end{equation*}

We say that e domain $\Omega$ is Lipschitz (or a $C^{0,1}$) with constants $r_0$, $L_0$, if $\partial\Omega\in C^{0,1}$.
\end{definition}
We can now state our main set of assumptions. 
\begin{assumption}\label{as:1}
 $\om\subset \R^2$  is a bounded Lipschitz domain with constants $r_0, L_0$.
\end{assumption}

\begin{assumption}\label{as:2} $\Sigma \subset \partial \Omega$, an open arc, is the portion of boundary which is accessible for measurement.
\end{assumption}

\begin{assumption}\label{as:3}
The cavity $D$ is the union of at most $M$ disjoint, compact, simply connected sets and has Lipschitz boundary, i.e  $D\in\mathcal{D}$ defined by

$$
\mathcal{D}=\{\exists N\leq M \;|\; D=\cup_{j=1}^N D_j\subset \Omega\,, \partial D\in C^{0,1} \textrm{ with constants } r_0,L_0\}
$$
where, $\forall \;1\leq j\leq N$, $D_j$ is compact and simply connected; moreover we assume
${\rm dist}(D_j,D_i)\geq d_0\,\,\,\forall i\neq j$ and ${\rm dist}(D,\partial\Omega)\geq 2 d_0>0.$

\end{assumption}
\begin{assumption}\label{as:4} The source term $f$ in \eqref{probcav} satisfies
\begin{equation}
 f \in L^{\infty}(\Omega),\,\, f\geq 0, \,\,\,\mathrm{supp}(f)\subset\Omega_{d_0}=\{x\in\Omega: dist(x,\partial\Omega)\leq d_0\}.
  \end{equation}
  \end{assumption}
\vskip 2truemm
We will denote by
$$\textrm{Per}(D)=\operatorname{TV}(\chi_D,\Omega)=\sup\left\{\int_{\Omega}\chi_D(x)\textrm{div}\varphi(x)dx:\varphi\in C_c^1(\Omega,\mathbb{R}^2), |\varphi|\leq 1\right\}.$$
In particular, if $D\in\mathcal{D}$ we have that
$$
\textrm{Per}(D)=\mathcal{H}^1(\partial D)<+\infty
$$
where $\mathcal{H}^1(\partial D)$ is the one-dimensional Hausdorff measure of $\partial D$.

\vskip 5truemm

In the sequel $A\triangle B:=(A\backslash B)\cup(B\backslash A)$ will indicate the symmetric difference of the two sets $A$ and $B$. Finally, let us recall the definition of the Hausdorff distance between two sets $A$ and $B$:
\[d_H(A,B)=\max\{\sup_{x\in A}\inf_{y\in B}\textrm{dist}
(x,y),\sup_{y\in B}\inf_{x\in A}\textrm{dist}(y,x)\}.\]
\begin{remark}
Throughout the paper, for the sake of brevity, we will denote with $v$ the indicator function of some set $D\subset \Omega$ and we will use $\textrm{Per}(D)$ or $\operatorname{TV}(v)$ depending on the situation.
\end{remark}
We will use several times throughout the paper the following compactness result
\begin{proposition}
\label{compactD}
$\mathcal{D}$ is compact with respect to the Hausdorff topology.
\end{proposition}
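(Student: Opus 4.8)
The plan is to establish sequential compactness of $\mathcal{D}$ in the Hausdorff metric, which by the metric-space structure of the collection of nonempty compact subsets of $\overline\Omega$ (with the Hausdorff distance) is equivalent to compactness. So I would start from an arbitrary sequence $\{D^{(k)}\}_{k\in\N}\subset\mathcal{D}$ and produce a subsequence converging in $d_H$ to some $D\in\mathcal{D}$. Since each $D^{(k)}$ is a closed subset of the fixed compact set $\overline\Omega$, Blaschke's selection theorem immediately gives a subsequence (not relabeled) and a nonempty compact set $D\subset\overline\Omega$ with $D^{(k)}\to D$ in the Hausdorff metric. The real content is to show that the limit set $D$ again belongs to $\mathcal{D}$, i.e.\ that all the structural constraints defining $\mathcal{D}$ — at most $M$ connected components, each compact and simply connected, uniform Lipschitz character with the \emph{same} constants $r_0, L_0$, the separation ${\rm dist}(D_j,D_i)\geq d_0$, and ${\rm dist}(D,\partial\Omega)\geq 2d_0$ — pass to the Hausdorff limit.

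I would argue the stability of each constraint in turn. The distance conditions ${\rm dist}(D^{(k)},\partial\Omega)\geq 2d_0$ pass to the limit because $A\mapsto {\rm dist}(A,\partial\Omega)$ is continuous (indeed $1$-Lipschitz) with respect to $d_H$; this in particular keeps $D$ away from $\partial\Omega$, so $D\subset\Omega$. For the number of components and their separation: writing $D^{(k)}=\cup_{j=1}^{N_k}D_j^{(k)}$ with $N_k\le M$, pass to a further subsequence so that $N_k\equiv N$ is constant, and then apply Blaschke again componentwise (after relabeling so that, say, the components are ordered) to get $D_j^{(k)}\to D_j$ in $d_H$ for each $j=1,\dots,N$, with $D=\cup_{j=1}^N D_j$. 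The separation ${\rm dist}(D_j,D_i)\ge d_0$ for $i\neq j$ again follows from continuity of the distance functional; this also guarantees the $D_j$ remain pairwise disjoint, so $D$ has exactly $N\le M$ components. The heart of the matter — and the step I expect to be the main obstacle — is showing that each limit component $D_j$ is simply connected and has $C^{0,1}$ boundary with the same constants $r_0, L_0$. The standard device here is the uniform Lipschitz parametrization: for each $P\in\partial D_j^{(k)}$ there is a rigid motion and a function $\psi$ with $\psi(0)=0$, $\|\psi\|_{C^{0,1}}\le L_0$, describing $D_j^{(k)}$ in $B_{r_0}(P)$ as an epigraph. Given a point $P\in\partial D_j$, choose $P_k\in\partial D_j^{(k)}$ with $P_k\to P$ (possible since $D_j^{(k)}\to D_j$ in $d_H$ forces convergence of boundaries for this class), extract a convergent subsequence of the associated rigid motions (the rotation part lives in the compact group $SO(2)$) and, via Arzelà–Ascoli applied to the uniformly bounded, uniformly Lipschitz functions $\psi_k$, a subsequence $\psi_k\to\psi$ uniformly on $(-r_0,r_0)$ with $\psi(0)=0$ and $\|\psi\|_{C^{0,1}}\le L_0$. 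One then checks that this limiting graph locally describes $D_j$ near $P$, using the Hausdorff convergence to match the epigraph of $\psi_k$ with $D_j^{(k)}\cap B_{r_0}$ and pass to the limit; this yields the uniform Lipschitz regularity of $\partial D_j$ with the original constants. Finally, simple connectedness of each $D_j$ is inherited in the limit: a Lipschitz domain with the stated uniform character has uniformly bounded perimeter $\mathcal H^1(\partial D_j^{(k)})\le C(M,r_0,L_0)$, and by lower semicontinuity of perimeter under $L^1$ convergence of characteristic functions (which follows from $d_H$ convergence together with the uniform cone/Lipschitz condition preventing measure from disappearing) the limit has finite perimeter; combined with the local graph representation just established, $D_j$ is a Lipschitz domain, hence its complement in $\R^2$ is connected, i.e.\ $D_j$ is simply connected. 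Collecting these facts shows $D\in\mathcal{D}$, completing the proof. I would flag the boundary-regularity passage to the limit (uniform Lipschitz graphs via Arzelà–Ascoli plus the matching of epigraphs under Hausdorff convergence) as the one requiring genuine care; the distance and cardinality bookkeeping is routine.
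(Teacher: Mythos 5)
Your overall strategy coincides with the paper's: Blaschke's selection theorem to extract a Hausdorff-convergent subsequence, then stability under the Hausdorff limit of each constraint defining $\mathcal{D}$ (number and separation of components, distance to $\partial\Omega$, uniform $C^{0,1}$ character). Where the paper simply cites results of Henrot--Pierre (Theorem 2.4.7, Remark 2.4.8, Theorem 2.4.10 of \cite{HP}) to obtain the convergence $\partial D_k\to\partial D$ and the Lipschitz regularity of the limit with the same constants $r_0,L_0$, you sketch the underlying direct argument (compactness of the rigid motions plus Arzel\`a--Ascoli on the uniformly Lipschitz graphs $\psi_k$, then matching of epigraphs under Hausdorff convergence). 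That is a legitimate, self-contained substitute for the citation, and you correctly flag the epigraph-matching as the delicate point.

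There is, however, one genuine gap: your deduction of simple connectedness. You write that the limit component $D_j$, being a Lipschitz domain, ``hence'' has connected complement in $\R^2$ and is therefore simply connected. The first implication is false: a closed annulus is a compact set with smooth (a fortiori Lipschitz) boundary whose complement is disconnected and which is not simply connected. Uniform boundary regularity alone cannot rule this out; you need a topological input that survives the limit. The paper's route (through \cite{HP}) is to get \emph{connectedness of the limit boundary}: each $\partial D_j^{(k)}$ is connected (a single Jordan-type curve, since $D_j^{(k)}$ is compact, simply connected and Lipschitz), Hausdorff limits of connected compact sets are connected, and combined with the boundary convergence $\partial D_j^{(k)}\to\partial D_j$ you already invoke, this gives $\partial D_j$ connected; a compact planar set with connected Lipschitz boundary is then simply connected. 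Replacing your complement argument by this boundary-connectedness argument closes the gap; the rest of the bookkeeping (distances, component count, perimeter bounds) is fine.
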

\begin{proof} Let us first consider the case $M=1$ i.e.  $D\in \mathcal{D}$ is compact and simply connected. Let  $\{D_k\}_{k=1}^{\infty}\subset\mathcal{D}$ be a sequence of sets in $\Omega$. Then by Blaschke’s Selection Theorem (see for example Theorem 3.1 in \cite{dalMT}) there exists a subsequence that we still indicate by $\{D_k\}_{k=1}^{\infty}$ converging in the Hausdorff metric to a compact set $D$. Furthermore, as a consequence of Theorem 2.4.7, Remark 2.4.8 and Theorem 2.4.10 of \cite{HP}, $\{\partial D_k\}_{k=1}^{\infty}$ converges in the Hausdorff metric to  $\partial D$  and  $\partial D$ is Lipschitz with constants $r_0,L_0$ and is connected, which implies that $D$ is also simply connected. So, $D\in \mathcal{D}$ which concludes the proof. 

If $M>1$ and $D_k\rightarrow D=\cup_{j=1}^N D_{j}$, where, $\forall j$, $D_j$ is simply connected and its boundary is Lipschitz with constants $r_0,L_0$. Because of the uniformity of the Lipschitz property, for sufficiently large $k$ we have that $D_k=\cup_{j=1}^N D_{j,k}$, i.e. $D_k$ has the same number of disjoint connected components ad $D$. Moreover, for any fixed $j$, $D_{j,k}\rightarrow D_j$, possibly up to a subsequence. Finally, by the definition of Hausdorff distance we conclude that $d(D_j,D_i)\geq d_0$ for any $i\neq j.$

\end{proof}

\section{Analysis of the direct problem}
\label{sec:direct}

\subsection{Well posedness and main estimates}

We first recall a well posedness result for problem (\ref{probcav}) proved in \cite{BCP} (in a more general setting) together with some estimates on the solution which will be useful in the subsequent discussion. Note that, by assumptions $1$, $3$,
the domains $\om_D$  have Lipschitz  boundaries for any $D\in \mathcal{D}$. Then we have:

\begin{theorem}
\label{exist}
Suppose that Assumptions $1-4$ hold. Then problem \eqref{probcav} has a unique solution $u\in H^1(\omd)$.
Furthermore, the following bounds hold:

\begin{equation}
\label{apriori}
\|u\|_{H^1(\omd)}
\le C\big (\|f\|_{(H^1)'}+\|f\|^{1/3}_{(H^1)'}\big )
\end{equation}

\begin{equation}\label{boundsu}
0\leq u(x) \leq\left(ess\,\sup_{\omd}f\right)^{1/3}\quad\quad \mathrm{a.e.}\quad x\in\omd\,.
\end{equation}

where $C=\max\{1,|\omd|^{1/3}$\} and $(H^1)'=H^1(\omd)'$ is the dual space of the Sobolev space $H^1(\omd)$.
\end{theorem}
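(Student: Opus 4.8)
The plan is to treat \eqref{probcav} variationally and obtain existence and uniqueness from monotonicity/convexity, then derive the two bounds separately. Set $H = H^1(\om_D)$ and define the energy functional
\[
\mathcal{E}(u) = \frac{1}{2}\int_{\om_D}|\nabla u|^2\,dx + \frac{1}{4}\int_{\om_D} u^4\,dx - \langle f, u\rangle ,
\]
which is well-defined on $H$ because the embedding $H^1(\om_D)\embed L^4(\om_D)$ holds in two dimensions (here the Lipschitz regularity of $\partial\om_D$ furnished by Assumptions 1 and 3 is what gives a bounded extension operator, hence the Sobolev embedding). The functional $\mathcal{E}$ is strictly convex (sum of the convex Dirichlet term, the strictly convex quartic term, and a linear term), continuous, and coercive on $H$: the quartic term controls $\|u\|_{L^4}$ and, combined with the Dirichlet term and a Poincaré-type inequality on the connected Lipschitz domain $\om_D$, controls the full $H^1$ norm, so $\mathcal{E}(u)\to+\infty$ as $\|u\|_H\to\infty$. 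By the direct method of the calculus of variations there is a unique minimizer $u\in H$, and its Euler--Lagrange equation is exactly the weak form of \eqref{probcav} with the homogeneous Neumann condition built in naturally. Uniqueness for the PDE then follows either from strict convexity or, directly, by subtracting two solutions $u_1,u_2$, testing with $u_1-u_2$, and using monotonicity of $t\mapsto t^3$ to get $\|\nabla(u_1-u_2)\|_{L^2}^2 + \int (u_1^3-u_2^3)(u_1-u_2) \le 0$, forcing $u_1 \equiv u_2$.

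For the $L^\infty$ bound \eqref{boundsu} I would use truncation test functions. Testing the weak equation with $u^- = \max\{-u,0\}$ and using $\mathrm{supp}(f)\subset\Omega_{d_0}$, $f\ge 0$ gives $\int|\nabla u^-|^2 + \int (u^-)^4 \le 0$, hence $u\ge 0$ a.e. For the upper bound let $K = (\operatorname{ess\,sup}_{\om_D} f)^{1/3}$ and test with $(u-K)^+$; since $u\ge K$ on the support of this test function we have $u^3 \ge K^3 \ge f$ there, so
\[
\int_{\om_D}|\nabla (u-K)^+|^2\,dx \le \int_{\om_D}\big(f - u^3\big)(u-K)^+\,dx \le 0,
\]
which forces $(u-K)^+ = 0$, i.e. $u\le K$ a.e. (One checks $K<\infty$ by Assumption 4.)

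Finally, for the quantitative a priori estimate \eqref{apriori}, I would test the weak formulation with $u$ itself:
\[
\|\nabla u\|_{L^2}^2 + \|u\|_{L^4}^4 = \langle f,u\rangle \le \|f\|_{(H^1)'}\|u\|_{H^1}.
\]
The left-hand side bounds $\|u\|_{H^1}^2$ from below only after relating $\|u\|_{L^2}$ to $\|\nabla u\|_{L^2}$ and $\|u\|_{L^4}$; splitting into the cases $\|u\|_{H^1}\le 1$ and $\|u\|_{H^1}\ge 1$ and using $\|u\|_{L^4}^4 \ge c\|u\|_{H^1}^4$ (resp. the Dirichlet part dominating) yields $\|u\|_{H^1} \le C(\|f\|_{(H^1)'} + \|f\|_{(H^1)'}^{1/3})$ with $C = \max\{1,|\om_D|^{1/3}\}$; the volume factor enters through Hölder/Poincaré when passing between $L^2$ and $L^4$ norms on $\om_D$. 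The only mildly delicate point — the main obstacle — is making the constants in the Poincaré/Sobolev inequalities uniform over the class $\mathcal{D}$, i.e. independent of the particular cavity $D$; this is exactly where the \emph{uniform} Lipschitz character (common constants $r_0,L_0$ and the separation $d_0$ in Assumption 3) is essential, and since \cite{BCP} already proves the statement in greater generality I would simply invoke it, sketching only the truncation arguments for completeness.
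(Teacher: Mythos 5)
Your proposal is correct and follows essentially the same route as the paper, which gives no proof of its own but defers to \cite{BCP} (Proposition 3.4 and Theorem 3.5) with the remark that the argument rests on ``suitable Sobolev estimates and the maximum principle'' --- precisely the energy/testing estimate and the truncation arguments you carry out. The only cosmetic point is that for this fixed $D$ the constant in \eqref{apriori} needs only H\"older's inequality through $|\omd|$, so the uniformity of Poincar\'e/Sobolev constants over $\mathcal{D}$ that you flag as the main obstacle is not actually required here (it matters only for the later continuity results).
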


The proof follows by suitable Sobolev estimates and by the maximum principle, see \cite{BCP} Proposition 3.4 and Theorem 3.5.

\subsection{Continuity properties of the solutions with respect to $D$}

Let us consider the weak formulation of  Problem \eqref{probcav}

\begin{equation}
\label{weakfucav1}
\int_{\omd} \nabla u\cdot\nabla\phi + \int_{\omd}u^3\phi=\int_{\omd} f\phi, \ \ \ \forall\phi\in H^1(\omd).
\end{equation}

\smallskip
By Theorem \ref{exist}, there is a unique solution $u_D\in H^1(\omd)$ of \eqref{weakfucav1} which is uniformly bounded in  $H^1(\omd)$ and in $L^{\infty}(\omd)$ by constants depending only on $f$ (for a given $\om$).

\smallskip
In this section, we will prove the continuity of the trace map $D\mapsto u_D\big |_{\Sigma}$
for domains in the  class $\mathcal{D}$ defined in assumption 3; more precisely :

\smallskip
 let $D_n\in \mathcal{D}$ be a sequence of sets converging to $D$ in the metric defined by the Hausdorff distance $d_H$ and let
$u_n:=u_{D_n}$, $u:=u_D$. Then

\begin{equation}
\label{convdom}
\lim_{n\to \infty}\int_{\Sigma}|u_n-u|^2
=0\,.
\end{equation}

\smallskip
The proof of our claim will require some intermediate steps.

To begin with, by known results on approximation of bounded Lipschitz domains (see e.g. \cite{Ver} theorem $1.12$)  one can construct, for any $\varrho>0$,  a subset $D_{\varrho}$ such that $\Omega_{\varrho}:=\Omega\backslash D_{\varrho}$ satisfies the following properties

\begin{enumerate}
  \item $\Omega_{d_0}\subset\omdel\subset\subset\om_D$ and $\partial\omdel$ is $C^{0,1}$ (actually smooth) 
  \item  $\big | \om_D\setminus \omdel\big |<\varrho$\,.
\end{enumerate}

Then, by the convergence of $D_n$ to $D$ in the Hausdorff metric (see the proof of Proposition \ref{compactD} above) there exists a positive integer $n_{\varrho}$ such that
$\omdel\subset\subset\om_{D_n}$ for every $n>n_{\varrho}$. 

Note that
\begin{equation}
\label{omenodn}
\big | \om_{D_n}\setminus \omdel \big |\le \big | \om_D\setminus \omdel\big |+
\big |D\setminus D_n  \big |<\varrho+o(1)\,,
\end{equation}
for $n\to\infty$.

\smallskip
Then we have
\begin{theorem}
\label{contdom}
Let $u$, $u_n$, $\omdel$, $n_{\varrho}$ be defined as above. Then, for any $\epsilon>0$ there exists $\varrho(\epsilon)>0$ such that, for every $\varrho<\varrho(\epsilon)$ and $n>n_{\varrho}$,
\begin{equation}
\label{convdomeps}
\|u_n-u\|_{H^1(\omdel)}<\epsilon\,.
\end{equation}
\end{theorem}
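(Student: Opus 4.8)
The plan is to derive the $H^1(\omdel)$ bound on $u_n - u$ by testing the weak formulations of the two problems against a common test function supported (morally) on the fixed domain $\omdel$, exploiting that both $\omdel \subset\subset \om_D$ and $\omdel \subset\subset \om_{D_n}$ for $n > n_\varrho$. First I would take the two weak formulations \eqref{weakfucav1} for $u = u_D$ on $\om_D$ and for $u_n = u_{D_n}$ on $\om_{D_n}$, restrict attention to test functions $\phi \in H^1(\Omega)$ that are extended by a bounded extension operator, and subtract. The difficulty is that the domains of integration differ; so I would write $\int_{\omdel}(\nabla u_n - \nabla u)\cdot\nabla\phi + \int_{\omdel}(u_n^3 - u^3)\phi$ and move the mismatch terms $\int_{\om_{D_n}\setminus\omdel}$ and $\int_{\om_D\setminus\omdel}$ to the right-hand side. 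By \eqref{omenodn} and property (2) of $\omdel$, each of these leftover regions has measure $< \varrho + o(1)$, and since $u, u_n$ are uniformly bounded in $H^1$ and in $L^\infty$ (Theorem \ref{exist}, with bounds depending only on $f$ and $\Omega$), the mismatch terms are controlled by a quantity that is $o(1)$ as $n\to\infty$ plus something small with $\varrho$ — but this requires care, since $\nabla u_n$ is only $L^2$, not $L^\infty$, so bounding $\int_{\om_{D_n}\setminus\omdel}\nabla u_n\cdot\nabla\phi$ by the measure of the small set needs either an equi-integrability argument for $|\nabla u_n|^2$ or a clever choice of $\phi$.

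The natural choice is to take $\phi = (u_n - u)\theta^2$ (or $\phi = u_n - u$ directly) as test function, after suitably extending $u$ and $u_n$ to all of $\Omega$; here a cutoff $\theta \in C_c^\infty$ with $\theta \equiv 1$ on $\Omega_{d_0}$ and $\mathrm{supp}\,\theta \subset \subset \omdel$ may help localize. Plugging this in, the principal term $\int_{\omdel}|\nabla(u_n-u)|^2\theta^2$ appears on the left; the monotonicity of $t \mapsto t^3$ gives $\int (u_n^3 - u^3)(u_n-u)\theta^2 \geq 0$, which is the key sign that lets us absorb the cubic term; and the cross terms involving $\nabla\theta$ and the domain-mismatch integrals go to the right. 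Then I would use the fact that $u_n$ is bounded in $H^1(\omdel)$ uniformly in $n$, hence (up to subsequence) converges weakly in $H^1(\omdel)$ and strongly in $L^2(\omdel)$ to some limit, which by a standard argument — passing to the limit in the weak formulation restricted to test functions supported in $\omdel$, using that $\omdel$ is eventually contained in every $\om_{D_n}$ — must coincide with $u$ restricted to $\omdel$. Combining weak convergence with convergence of the $H^1$ norms (obtained from the energy identity above, once the right-hand side is shown to vanish) upgrades this to strong $H^1(\omdel)$ convergence, which is exactly \eqref{convdomeps}: given $\epsilon$, first choose $\varrho(\epsilon)$ so the $\varrho$-dependent part is $< \epsilon/2$, then choose $n$ large so the $o(1)$ part is $< \epsilon/2$.

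The main obstacle, as anticipated, is the handling of the domain-mismatch integrals $\int_{\om_{D_n}\setminus\omdel} \nabla u_n\cdot\nabla\phi$: since only $\|\nabla u_n\|_{L^2(\om_{D_n})}$ is controlled (not a higher norm), smallness of $|\om_{D_n}\setminus\omdel|$ alone does not immediately make this integral small. I expect this to be circumvented precisely by the cutoff trick: if $\phi$ is chosen to vanish near $\partial\omdel$ — e.g. $\phi$ supported in $\omdel$ — then these mismatch integrals are simply zero, and one only needs $u_n, u$ to solve their equations with such test functions, which they do since $\mathrm{supp}\,\phi \subset \omdel \subset \om_D \cap \om_{D_n}$. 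The residual subtlety is that the energy identity then only controls $\nabla(u_n-u)$ on the support of $\theta$, not all of $\omdel$; to get the full $\omdel$ estimate one iterates with a nested family of cutoffs, or observes that $\omdel$ can itself be taken slightly larger than the region where the $\epsilon$-estimate is ultimately needed (namely a neighborhood of $\Sigma$, for the trace bound \eqref{convdom}). A secondary point to verify is the extension of $u_n$ from $\om_{D_n}$ to $\omdel$: since $\omdel \subset \subset \om_{D_n}$ this is trivial (just restrict), so no uniform extension operator across the varying boundaries $\partial D_n$ is actually needed — this is the reason for introducing the fixed intermediate domain $\omdel$ in the first place.
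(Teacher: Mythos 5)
Your proposal correctly isolates the crux of the theorem --- the mismatch integral $\int_{\om_{D_n}\setminus\omdel}\nabla u_n\cdot\nabla\phi$ cannot be made small using $|\om_{D_n}\setminus\omdel|$ alone when only $\|\nabla u_n\|_{L^2}$ is controlled --- and you even name the paper's actual remedy in passing: the paper proves a Claim that the $u_n$ admit extensions to $\Omega$ uniformly bounded in $H^{3/2}(\Omega)$ (via elliptic regularity for the Neumann problem on Lipschitz domains and a Sobolev extension theorem), hence relatively compact in $H^1(\Omega)$, hence with $|\nabla u_n|^2$ uniformly integrable; the extension $\phi_{\varrho,n}$ of $(u-u_n)|_{\omdel}$ is then used directly as test function and both mismatch integrals are estimated by the same bound. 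You instead commit to the cutoff route, which would avoid both the equi-integrability claim and the uniform extension across the varying boundaries. That route can be made to work, but as written it has a genuine gap.

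The gap is the identification of the weak limit. You assert that the weak-$H^1(\omdel)$ (hence strong-$L^2(\omdel)$) limit $w$ of a subsequence of $u_n$ ``must coincide with $u$ restricted to $\omdel$'' by ``passing to the limit in the weak formulation restricted to test functions supported in $\omdel$.'' Test functions compactly supported in $\omdel$ only yield the interior equation $-\Delta w+w^3=f$ in $\omdel$, which $u$ also satisfies but which, absent any boundary condition on the inner boundary $\partial D_\varrho$, has many solutions; so this step does not identify $w$ with $u$, and without it the right-hand side $\int(u_n-u)^2|\nabla\theta|^2$ of your energy identity is not known to vanish. To close the argument you must pass to the limit against genuine $H^1(\Omega)$ test functions, i.e.\ face the mismatch integrals after all --- though for this purpose they can be handled without equi-integrability of $|\nabla u_n|^2$, since for a \emph{fixed} $\phi$ one has $\big|\int_{\om_{D_n}\setminus\omdel}\nabla u_n\cdot\nabla\phi\big|\le C\,\|\nabla\phi\|_{L^2(\om_{D_n}\setminus\omdel)}$, which is small by absolute continuity of the integral of the fixed function $|\nabla\phi|^2$; letting $n\to\infty$ and then $\varrho\to 0$ shows $w$ solves the weak formulation on $\omd$, and uniqueness gives $w=u$. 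With this repair, plus the observation that the cutoff $\theta$ need only vanish near $\partial D_{\varrho'}$ for some $\varrho'<\varrho$ (not near $\partial\Omega$, where the Neumann condition makes cutoffs unnecessary) so that the full $H^1(\omdel)$ norm is recovered, your argument becomes a complete proof that is genuinely different from, and arguably more elementary than, the paper's; but in its present form the identification step is unjustified.
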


\begin{proof}
Since $u$, $u_n$, solve \eqref{weakfucav1} respectively in $\omd$ and in $\Omega_{D_n}$ and recalling that supp$\,f\subset \Omega_{d_0}\subset\omdel$, we have
\begin{equation}
\nonumber
\int_{\omd} \nabla u\cdot\nabla\phi + u^3\phi=\int_{\Omega_{D_n}} \nabla u_n\cdot\nabla\phi + u_n^3\phi\,.
\end{equation}
$\forall\phi\in H^1(\om)\,$ (note that by our assumptions on the domains, any $\phi\in H^1(\omd)$ or in $H^1(\Omega_{D_n})$ is the restriction of a function in $H^1(\om)$).

\smallskip
By the decompositions $\omd=\omdel\,\cup\,\big (\omd\setminus\omdel \big )$, $\,\,\Omega_{D_n}=\omdel\,\cup\, \big (\om_{D_n}\setminus\omdel\big )$, we have

\begin{equation}
\nonumber
\int_{\omdel} \nabla u\cdot\nabla\phi + u^3\phi
+\int_{\omd\setminus\omdel} \nabla u\cdot\nabla\phi + u^3\phi=
\end{equation}

\begin{equation}
\nonumber
\int_{\omdel} \nabla u_n\cdot\nabla\phi + u_n^3\phi+\int_{\om_{D_n}\setminus\omdel} \nabla u_n\cdot\nabla\phi + u_n^3\phi\,.
\end{equation}

By rearranging terms:

\begin{equation}
\nonumber
\int_{\omdel}  \nabla (u-u_n) \cdot\nabla\phi + (u^3-u^3_n)\phi=
\end{equation}

\begin{equation}
\label{weakformdiffer}
-\int_{\omd\setminus\omdel} \nabla u\cdot\nabla\phi + u^3\phi
+\int_{\om_{D_n}\setminus\omdel} \nabla u_n\cdot\nabla\phi + u_n^3\phi\,.
\end{equation}

Let $\phi_{\varrho,n}\in H^1(\om)$ be a function satisfying

\begin{equation}
\nonumber
\phi_{\varrho,n}\,\big |_{\omdel}=(u-u_n)\,\big |_{\omdel}\,.
\end{equation}

The existence of $\phi_{\varrho,n}$ for every $\varrho$ (and $n$) follows by the extension property which holds for the Lipschitz domain $\omdel$.
Moreover, by the uniform bounds on $u$, $u_n$ in $\Omega_{\varrho}$ and by the continuity of the extension operator, we readily get

\begin{equation}
\label{stimphindel}
\|\phi_{\varrho,n}\|_{H^1(\om)}\le C\,,
\end{equation}

where the constant $C$  depends only on $\omdel$ and $f$. Actually, by properties $1$ and $2$ above and since $\Omega_D$ is Lipschitz we can take $C$ independent of $\varrho$.

\smallskip
By choosing $\phi=\phi_{\varrho,n}$ in \eqref{weakformdiffer} we obtain

\begin{equation}
\nonumber
\int_{\omdel} \nabla (u-u_n) \cdot\nabla (u-u_n) + (u-u_n)^2(u^2+u u_n+u_n^2)=
\end{equation}

\begin{equation}
\label{weakformdiff}
-\int_{\omd\setminus\omdel} \nabla u\cdot\nabla\phi_{\varrho,n} + u^3\phi_{\varrho,n}
+\int_{\om_{D_n}\setminus\omdel} \nabla u_n\cdot\nabla\phi_{\varrho,n} + u_n^3\phi_{\varrho,n}\,.
\end{equation}

We now estimate the integrals at the right hand side. First, since $u\in H^1(\omd)\cap L^{\infty}(\omd)$ we get by \eqref{stimphindel} and by
Holder inequality

\begin{equation}
\nonumber
\Big |\int_{\omd\setminus\omdel} \nabla u\cdot\nabla\phi_{\varrho,n} + u^3\phi_{\varrho,n}\Big |\le
C_1\Big (\int_{\omd\setminus\omdel} |\nabla u|^2\Big )^{1/2}+
C_2\big |\omd\setminus\omdel  \big |^{1/2}\,,
\end{equation}

with $C_1$, $C_2$ independent of $\varrho$ and $n$. By Property $2$ and by the integrability of $|\nabla u|^2$, we can now write

\begin{equation}
\label{estrhs1b}
\Big |\int_{\omd\setminus\omdel} \nabla u\cdot\nabla\phi_{\varrho,n} + u^3\phi_{\varrho,n}\Big |\le
C_1 \,o(1)+
C_2\,\varrho^{1/2}\,,
\end{equation}

for $\varrho\to 0$. By similar estimates of the second term at the right hand side of \eqref{weakformdiff} and taking into account \eqref{omenodn} we obtain

\begin{equation}
\nonumber
\Big |\int_{\om_{ D_n}\setminus\omdel} \nabla u_n\cdot\nabla\phi_{\varrho,n} + u_n^3\phi_{\varrho,n}\Big |\le
C_1\Big (\int_{\om_{ D_n}\setminus\omdel} |\nabla u_n|^2\Big )^{1/2}+
C_2\,\big (\varrho+o(1)\big )^{1/2}\,.
\end{equation}

\emph{Claim:}

the ${u_n}'s$ can be extended to $\om$ in such a way that the sequence $|\nabla u_n|^2$ is \emph{uniformly integrable} in $\om$.

\smallskip
\begin{proof}
The function $u_n$ is a weak solution of the Neumann problem
\begin{equation}
\nonumber
\left\{
  \begin{array}{ll}
    -\Delta u_n=f-u_n^3, & \hbox{in $\om_{D_n}$} \\
    \displaystyle{\frac{\partial u}{\partial\mathbf{n}}}=0, & \hbox{on $\partial\om_{D_n}$},
  \end{array}
\right.
\end{equation}

By the estimates of the previous section the right hand side of the above equation satisfies
\begin{equation}
\nonumber
\mathrm{ess}\,\sup_{\om_{D_n}} |f-u_n^3|\le C\,,
\end{equation}

with $C$ independent of $n$. Then, known regularity results for the Neumann problem in Lipschitz domains \cite{JK}, \cite{Cost} imply that
$u_n\in H^{3/2}(\Omega_{D_n})$ with uniformly bounded norm. Moreover, $u_n$ has an extension (still denoted by $u_n$)
to $\om$ satisfying

\begin{equation}
\nonumber
\|u_n\|_{ H^{3/2}(\Omega)}\le C\,
\end{equation}

(see \cite{Grisvard} Theorem 1.4.3.1). Hence, by Sobolev imbeddings $\{u_n\}_{n\in \mathbb{N}}$ is a relatively compact subset of $H^1(\om)$; in particular, $|\nabla u_n|^2$ is relatively compact in $L^1(\om)$ and therefore is uniformly integrable.
\end{proof}

The above implies that estimate \eqref{estrhs1b} holds for \emph{both terms} on the right hand side of \eqref{weakformdiff}.

\smallskip
Hence, by taking $\varrho<\varrho(\eps)$ small enough and $n>n_{\varrho}$ large, we have

\begin{equation}
\nonumber
\int_{\omdel} \nabla (u-u_n) \cdot\nabla (u-u_n) + (u-u_n)^2(u^2+u u_n+u_n^2)\le \epsilon^2\,.
\end{equation}

Finally, since $u^2+u u_n+u_n^2\ge \frac{3}{4} u^2$ and  $\|u\|_{L^{\infty}(\omdel)}>0$, by Poincar\'{e} inequality (Theorem A.1 in \cite{BCMP}) we get

\begin{equation}
\nonumber
\|u-u_n\|^2_{H^{1}(\omdel)}\le  C\,\epsilon^2\,,
\end{equation}

for some constants $C$ independent of $\varrho$, $n$. Then, the result follows by redefining
$C^{1/2}\epsilon\rightarrow\epsilon$.

\end{proof}

We can now prove

\begin{corollary}
\label{limitdom}
Let $\Omega$, $\Omega_{d_0}$ be defined as in Section $2$ and let $D_n\,,D\subset \om$, $n=1,2,...$ such that  ${D_n}\in \mathcal{D}$, and $D_n\rightarrow D$ in the Hausdorff metric. Let $u\in H^1(\omd)$ and $u_n\in H^1(\om_{D_n})$ be the solutions of \eqref{weakfucav1} in $\omd$ and in $\om_{D_n}$ respectively. Then,

\begin{equation}
\label{convdomteo}
\lim_{n\to \infty}\|u_n-u\|_{L^2(\Sigma)}
=0\,.
\end{equation}

\end{corollary}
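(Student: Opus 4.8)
The plan is to deduce the $L^2(\Sigma)$-convergence from the $H^1(\omdel)$-convergence established in Theorem \ref{contdom}, using the trace theorem and a careful choice of the parameter $\varrho$. First I would fix an arbitrary $\epsilon > 0$. Since $\Sigma \subset \partial\Omega$ and, by Property 1 in the construction preceding Theorem \ref{contdom}, $\Omega_{d_0} \subset \omdel$ with $\dist(D, \partial\Omega) \geq 2d_0$, the arc $\Sigma$ lies on the \emph{common} boundary $\partial\Omega \cap \partial\omdel$ for every admissible $\varrho$; indeed $\partial\omdel$ agrees with $\partial\Omega$ near $\Sigma$. Hence the trace operator $\gamma: H^1(\omdel) \to L^2(\Sigma)$ is well defined and bounded, with operator norm $C_\Sigma$ depending only on $\omdel$ (and, since all the $\omdel$ can be taken with uniformly Lipschitz boundary by Properties 1--2 and the Lipschitz regularity of $\Omega_D$, one can fix a single such constant independent of $\varrho$). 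Therefore
\begin{equation}
\nonumber
\|u_n - u\|_{L^2(\Sigma)} = \|\gamma(u_n - u)\|_{L^2(\Sigma)} \leq C_\Sigma \, \|u_n - u\|_{H^1(\omdel)}.
\end{equation}

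Next I would invoke Theorem \ref{contdom}: given $\epsilon > 0$, choose $\varrho < \varrho(\epsilon/C_\Sigma)$ and then, for this now-fixed $\varrho$, let $n_\varrho$ be as in the statement. For all $n > n_\varrho$ we get $\|u_n - u\|_{H^1(\omdel)} < \epsilon/C_\Sigma$, and combining with the trace estimate above yields $\|u_n - u\|_{L^2(\Sigma)} < \epsilon$ for all $n > n_\varrho$. Since $\epsilon$ was arbitrary, this establishes \eqref{convdomteo}. The argument is essentially a two-line corollary once the bookkeeping of constants is in place.

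The one genuine subtlety — and the step I would be most careful about — is the uniformity of the trace constant in $\varrho$, together with the observation that $\Sigma$ genuinely belongs to the boundary of the fixed-geometry region $\omdel$ rather than of the moving domains $\om_{D_n}$. This is what makes the estimate legitimate: we never need a trace inequality on $\om_{D_n}$ (whose boundary constants we do not control uniformly in $n$ near $\partial D_n$), only on the $n$-independent region $\omdel$ whose boundary near $\Sigma$ coincides with that of $\Omega$. Since Theorem \ref{contdom} already provides the $H^1(\omdel)$-convergence with $\varrho$ chosen as small as we wish, and the trace constant can be bounded uniformly by the remarks following \eqref{stimphindel}, no further analytic input is required; the proof reduces to selecting $\varrho$ and $n$ in the right order.
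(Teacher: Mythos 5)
Your proof is correct and follows essentially the same route as the paper's: deduce \eqref{convdomteo} from Theorem \ref{contdom} via a trace inequality, choosing $\varrho$ and then $n$ in that order. The paper sidesteps the one subtlety you rightly flag (uniformity of the trace constant in $\varrho$) by applying the trace theorem on the \emph{fixed} subdomain $\Omega_{d_0}\subset\omdel$, writing $\int_{\Sigma}|u_n-u|^2\le C\,\|u_n-u\|^2_{H^1(\Omega_{d_0})}\le C\,\|u_n-u\|^2_{H^1(\omdel)}$, so that a single $\varrho$-independent constant suffices without any further argument.
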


\begin{proof}
Note that $D\in \mathcal{D}$ by the proof of Proposition \ref{compactD}.
Fix $\epsilon>0$ and let $\varrho<\varrho(\eps)$, $n>n_{\varrho}$ such that \eqref{convdomeps} holds. Then, by standard trace theorems

\begin{equation}
\nonumber
\int_{\Sigma}|u_n-u|^2\le C\,\|u_n-u\|^2_{H^1(\Omega_{d_0})}\le C\|u_n-u\|^2_{H^1(\omdel)}<\epsilon^2\,
\end{equation}

and the corollary follows.

\end{proof}

\smallskip

\begin{remark}
\label{convinmisur}
Let $D_n$, $D$, be as in Corollary \ref{limitdom}, but assume that ${D_n}\rightarrow D\in \mathcal{D}$ in measure in $\om$, that is $|D_n\triangle D|\rightarrow 0$ (see \cite{AFP}, Remark $3.37$).
 Nevertheless, by Proposition \ref{compactD} the sequence ${D_n}$ has compact closure in the Hausdorff topology. Hence, there exists a subsequence $D_{n_k}$ which converges in the Hausdorff metric. Then the subsequence necessarily converges to $D$, since $D$ has (Lipschitz) continuous boundary; hence, the above corollary applies to this subsequence.
\end{remark}
\begin{remark}
Since we are considering the case of domains that are uniformly Lipschitz it is also possible to  derive the continuity of solutions with respect to perturbations of the cavities in the Hausdorff topology in the framework of Mosco convergence, see Theorem 7.2.7 in \cite{BB} and \cite{CD}. 
\end{remark}

\section{Reconstruction of cavities}
\label{sec:recon}

In \cite{BCP} the authors prove uniqueness for the inverse problem, i.e. that assuming it is possible to measure the solution $u$ to \eqref{probcav} on $\Sigma$ the cavity $D$ is uniquely determined. In this section we deal with the problem of reconstructing the cavity presenting a new rigorous algorithm based on $\Gamma$-convergence. We start by formulating a minimization problem for a functional $J$ depending on the cavity $D$ as a variable: let $u=u(D)$ be the unique $H^1(\omd)$ solution of the boundary value problem \eqref{probcav}, and consider

In order to reconstruct the cavity $D$ a natural approach is to minimize a quadratic misfit functional measuring the discrepancy on the boundary between the solution and the data perturbed with a term that penalizes the perimeter of the cavity $D$, i.e.
 
\begin{equation}\label{minper}
\min_{D\in \mathcal{D}} J(D):\, J(D) = \frac{1}{2} \int_{\Sigma}(u(D) - u_{meas})^2d\sigma+\alpha \textrm{Per}(D),
\end{equation}
where $\mathcal{D}$ and $\textrm{Per}(D)$ are specified in Section \ref{sec:notation}, and $\alpha>0$ is referred to as the \textit{regularization parameter}, which balances the contribution of the quadratic mismatch term and the regularization term in $J$. 

In Subsection \ref{subsec:regu}, we show that the minimization problem \eqref{minper} satisfies several desirable properties and can be interpreted as a regularization strategy for the inverse problem of determining $D$ from $u_{meas}$. In Subsection \ref{subsec:delta} we propose an approximation of problem \eqref{minper} by perturbing the solution map $u(D)$ appearing in it. The resulting problem is later relaxed by a phase-field approach presented in Subsection \ref{subsec:eps}. Finally, in Subsection \ref{subsec:gamma}, we prove the convergence of the introduced approximate functionals to the original $J$ in the sense of the $\Gamma$-convergence, which also entails the convergence of the associated minimizers.

\subsection{Regularization properties of the minimization problem}
\label{subsec:regu}

The minimization problem \eqref{minper} can be interpreted as a regularized counterpart of the inverse problem of determining $D$ from $u_{meas}$. In particular, in the following three results, we prove that for every $\alpha >0$ the functional $J$ admits a minimum, that the minimizers are stable with respect to perturbations of the datum $u_{meas}$ and that, if the amount of the noise on the measurements converges to zero and the parameter $\alpha$ is suitably chosen, the solutions of \eqref{minper} converge to the unique solution $D$ of the inverse problem.

\begin{proposition}\label{mincav}
 For every $\alpha >0$ there exists at least one solution to the minimization problem (\ref{minper}).
\end{proposition}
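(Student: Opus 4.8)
The plan is to use the direct method of the calculus of variations, exploiting the compactness of $\mathcal{D}$ in the Hausdorff topology (Proposition \ref{compactD}) together with the continuity of the data-misfit term (Corollary \ref{limitdom}) and the lower semicontinuity of the perimeter.

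First I would observe that $J$ is bounded below on $\mathcal{D}$: the misfit term is nonnegative and $\mathrm{Per}(D)=\mathcal{H}^1(\partial D)\geq 0$, so $\inf_{D\in\mathcal{D}}J(D)=:m\geq 0$ is finite. Moreover $J$ is not identically $+\infty$ on $\mathcal{D}$ — in fact, by the remark following the definition of $\mathcal{D}$, every $D\in\mathcal{D}$ has $\mathrm{Per}(D)<+\infty$ and the misfit term is finite thanks to the a priori bounds of Theorem \ref{exist} — so $m<+\infty$. Then I would take a minimizing sequence $\{D_n\}\subset\mathcal{D}$ with $J(D_n)\to m$. By Proposition \ref{compactD}, $\mathcal{D}$ is compact in the Hausdorff topology, so up to a subsequence (not relabeled) $D_n\to D^*$ in the Hausdorff metric with $D^*\in\mathcal{D}$.

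Next I would pass to the limit term by term. For the misfit term, Corollary \ref{limitdom} gives exactly $\|u(D_n)-u(D^*)\|_{L^2(\Sigma)}\to 0$, hence $\int_\Sigma (u(D_n)-u_{meas})^2\,d\sigma\to\int_\Sigma(u(D^*)-u_{meas})^2\,d\sigma$ (using that $u_{meas}\in L^2(\Sigma)$ and the triangle inequality in $L^2(\Sigma)$). For the perimeter term I would use lower semicontinuity: since the characteristic functions $\chi_{D_n}$ converge to $\chi_{D^*}$ in $L^1(\Omega)$ — which follows from Hausdorff convergence within the uniformly Lipschitz class $\mathcal{D}$, because $|D_n\triangle D^*|\to 0$ there — the total variation is lower semicontinuous, so $\mathrm{Per}(D^*)\leq\liminf_n\mathrm{Per}(D_n)$. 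Combining, $J(D^*)\leq\liminf_n J(D_n)=m$, and since $D^*\in\mathcal{D}$ we also have $J(D^*)\geq m$; hence $J(D^*)=m$ and $D^*$ is a minimizer.

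The main obstacle is the justification that Hausdorff convergence of $D_n\to D^*$ in $\mathcal{D}$ implies $L^1$-convergence of the characteristic functions (equivalently $|D_n\triangle D^*|\to 0$), which is what is needed to apply lower semicontinuity of total variation; this is not true for arbitrary compact sets but does hold here because of the \emph{uniform} Lipschitz bounds $r_0,L_0$ defining $\mathcal{D}$ — one gets uniform control on the measure of tubular neighborhoods of the boundaries, ruling out both "fattening" and "collapse". I would cite the relevant facts from \cite{HP} (Theorems 2.4.7, 2.4.10 and Remark 2.4.8, already invoked in the proof of Proposition \ref{compactD}) for the convergence of boundaries, and note that together with $D^*$ having nonempty interior this yields the measure convergence. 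A minor additional point is to make sure the number of connected components does not jump in the limit, but this was already settled in the proof of Proposition \ref{compactD} and can simply be quoted.
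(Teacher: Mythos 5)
Your proof is correct and rests on the same three ingredients as the paper's (Proposition \ref{compactD}, Corollary \ref{limitdom}, lower semicontinuity of the perimeter), but you obtain the two modes of convergence in the opposite order. The paper starts from the uniform bound $\mathrm{Per}(D_k)\le C$ and invokes BV-compactness (\cite{AFP}, Theorem 3.39) to extract an $L^1$-convergent subsequence $\chi_{D_k}\to\chi_{D_0}$, applies lower semicontinuity of the total variation there, and only afterwards upgrades to Hausdorff convergence via Remark \ref{convinmisur} together with Proposition \ref{compactD}, identifying the measure-theoretic limit with the Hausdorff limit so that Corollary \ref{limitdom} applies. You instead extract the Hausdorff limit first (which is free, since $\mathcal{D}$ is Hausdorff-compact) and then must prove that Hausdorff convergence within $\mathcal{D}$ forces $|D_n\triangle D^*|\to 0$; your tubular-neighbourhood justification is sound, since $D_n\triangle D^*$ is contained in a shrinking neighbourhood of $\partial D^*$, whose Lebesgue measure vanishes because $\partial D^*$ is a compact null set, and the convergence of boundaries is supplied by the results of \cite{HP} already used in Proposition \ref{compactD}. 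What each route buys: the paper's ordering avoids any quantitative statement about Hausdorff versus measure convergence by letting BV-compactness do the work and never needs the perimeter bound to be anything more than the trigger for compactness; your ordering dispenses with BV-compactness altogether and uses only the geometric compactness of the class $\mathcal{D}$, at the price of the extra (but correctly identified and correctly resolved) step relating the two topologies.
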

\begin{proof}
Assume $\{D_k\}_{k\geq 0}\in\mathcal{D}$ is a minimizing sequence. Then there exists a positive constant $C$ such that
$$J(D_k)\leq C,\,\,\, \forall k$$
 and in particular
 $$\textrm{Per}(D_k) \leq C,\,\,\forall k.$$
 Then by compactness (see for example \cite{AFP} Theorem 3.39) there exists a set of finite perimeter $D_0$ such that, possibly up to a subsequence,
$$
 |D_k\triangle D_0|\rightarrow 0,\,\, k\rightarrow \infty.
$$
Then by the lower semicontinuity property of the perimeter functional (see \cite{EG} Section 5.2.1, Theorem 1)  it follows that

$$
\textrm{Per}(D_0)\leq \liminf_{k\rightarrow\infty}\textrm{Per}(D_k).
$$
Moreover, by Remark \ref{convinmisur} and Proposition \ref{compactD} we may assume that the sequence also converges in the Hausdorff metric to $D_0\in \mathcal{D}$.
Hence, by Corollary \ref{limitdom} it follows that
\[
  \int_{\Sigma}(u(D_k) - u_{meas})^2d\sigma\rightarrow  \int_{\Sigma}(u(D_0) - u_{meas})^2d\sigma\,\,\textrm{as }k\rightarrow\infty.
  \]
Finally,
\[
J(D_0)\leq \liminf_{k\rightarrow\infty}J(D_k)=\lim_{k\rightarrow\infty}J(D_k)=\inf_{D\in \mathcal{D}}J(D)
\]
and the claim follows.
\end{proof}


\begin{proposition}
The solutions of (\ref{minper}) are stable w.r.t. perturbation of the data $u_{meas}$ i.e. if $\{u_k\}\subset L^2(\Sigma)\rightarrow u_{meas}$ in $L^2(\Sigma)$ as $k\rightarrow \infty$ then the solutions $D_k$ of (\ref{minper}) with datum $u_k$ are such that, up to subsequences,
$$
d_H(D_k,\bar D)\rightarrow 0,\,\, \textrm{ as }k\rightarrow \infty,
$$
where $\bar D\in \mathcal{D}$  is a solution of  (\ref{minper}), with datum $u_{meas}$.
\end{proposition}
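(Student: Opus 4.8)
The plan is to run the standard variational/stability argument for Tikhonov-type regularization, adapted to this geometric setting. Let $\{u_k\}\subset L^2(\Sigma)$ converge to $u_{meas}$, and for each $k$ let $D_k\in\mathcal{D}$ be a minimizer of the functional $J_k(D)=\frac{1}{2}\int_\Sigma(u(D)-u_k)^2\,d\sigma+\alpha\,\mathrm{Per}(D)$. First I would extract a uniform perimeter bound on the $D_k$: comparing $J_k(D_k)\le J_k(\bar D)$ for any fixed competitor $\bar D\in\mathcal{D}$ (e.g. a solution of \eqref{minper} with datum $u_{meas}$, or indeed any fixed set in $\mathcal{D}$), and using that $\|u_k-u_{meas}\|_{L^2(\Sigma)}\to 0$ so that the right-hand side is bounded uniformly in $k$, together with the uniform $L^\infty$ bound on $u(\bar D)|_\Sigma$ from Theorem \ref{exist}. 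This yields $\mathrm{Per}(D_k)\le C$ for all $k$.

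Next, exactly as in the proof of Proposition \ref{mincav}, I would invoke the compactness theorem for sets of finite perimeter (\cite{AFP} Theorem 3.39) to get a subsequence with $|D_k\triangle D_*|\to 0$ for some set $D_*$ of finite perimeter, then upgrade this via Remark \ref{convinmisur} and Proposition \ref{compactD} to convergence in the Hausdorff metric with $D_*\in\mathcal{D}$. Lower semicontinuity of the perimeter (\cite{EG} Section 5.2.1, Theorem 1) gives $\mathrm{Per}(D_*)\le\liminf_k\mathrm{Per}(D_k)$, and Corollary \ref{limitdom} gives $u(D_k)|_\Sigma\to u(D_*)|_\Sigma$ in $L^2(\Sigma)$. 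Combining the latter convergence with $u_k\to u_{meas}$ in $L^2(\Sigma)$, the mismatch term converges: $\int_\Sigma(u(D_k)-u_k)^2\,d\sigma\to\int_\Sigma(u(D_*)-u_{meas})^2\,d\sigma$.

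It remains to show $D_*$ is a minimizer of $J$ with datum $u_{meas}$. For an arbitrary competitor $D\in\mathcal{D}$, write
\begin{equation}
\nonumber
J(D_*)\le\liminf_{k\to\infty}\Big(\tfrac12\int_\Sigma(u(D_k)-u_k)^2\,d\sigma+\alpha\,\mathrm{Per}(D_k)\Big)=\liminf_{k\to\infty}J_k(D_k)\le\limsup_{k\to\infty}J_k(D),
\end{equation}
where the first inequality uses the two semicontinuity/convergence facts just established and the last uses minimality of $D_k$ for $J_k$. Since $J_k(D)=\frac12\int_\Sigma(u(D)-u_k)^2\,d\sigma+\alpha\,\mathrm{Per}(D)\to\frac12\int_\Sigma(u(D)-u_{meas})^2\,d\sigma+\alpha\,\mathrm{Per}(D)=J(D)$ by $u_k\to u_{meas}$, we conclude $J(D_*)\le J(D)$ for every $D\in\mathcal{D}$, so $D_*=\bar D$ is a solution of \eqref{minper} with datum $u_{meas}$, and $d_H(D_k,\bar D)\to 0$ along the chosen subsequence.

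The main obstacle is the bookkeeping around the two different notions of set convergence: the compactness that comes for free from the perimeter bound is only $L^1$/measure convergence, and one must pass through Remark \ref{convinmisur} (which crucially uses that the limit set has Lipschitz boundary, i.e. lies in $\mathcal{D}$) to reach Hausdorff convergence, which is the hypothesis needed to apply Corollary \ref{limitdom} for the continuity of the trace. One should also be slightly careful that the limit set a priori only has finite perimeter; the fact that it actually belongs to $\mathcal{D}$ (hence is an admissible competitor, and the conclusion $d_H(D_k,\bar D)\to 0$ makes sense) again rides on Proposition \ref{compactD}. Everything else is routine lower-semicontinuity and continuity plugging.
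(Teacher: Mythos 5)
Your proposal is correct and follows essentially the same route as the paper: uniform perimeter bound from minimality against a fixed competitor, compactness in $L^1$ upgraded to Hausdorff convergence to a limit set in $\mathcal{D}$ via Proposition \ref{compactD} and Remark \ref{convinmisur}, lower semicontinuity of the perimeter plus Corollary \ref{limitdom} for the trace, and the standard $\liminf$--$\limsup$ sandwich against an arbitrary competitor. The paper's version is merely terser (it leaves the compactness bookkeeping implicit, referring back to the proof of Proposition \ref{mincav}), so no substantive difference.
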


\begin{proof}
Observe that for any $D_k$
\[
\frac{1}{2} \int_{\Sigma}(u(D_k) - u_k)^2d\sigma+\alpha \textrm{Per}(D_k)\leq \frac{1}{2} \int_{\Sigma}(u(D) - u_k)^2d\sigma+\alpha \textrm{Per}(D),\,\,\, \forall D\in\mathcal{D}
\]
Hence, $\textrm{Per}(D_k)\leq K$ and hence, possibly up to subsequences, we have that
 $$
d_H(D_k,\bar D)\rightarrow 0,\,\, k\rightarrow \infty
$$
for some $\bar D\in\mathcal{D}$ and
$$
\textrm{Per}(\bar D)\leq \liminf_{k\rightarrow\infty}\textrm{Per}(D_k).
$$
Furthermore, by Corollary 3.3 
$$
u(D_k)\rightarrow u(\bar D),\,\,k\rightarrow \infty \textrm{ in } L^2(\Sigma),
$$
implying
\[
\begin{aligned}
J(\bar D)&\leq \liminf_{k\rightarrow\infty}\frac{1}{2} \int_{\Sigma}(u(D_k) - u_k)^2d\sigma+\alpha \textrm{Per}(D_k)\\
&\leq  \lim_{k\rightarrow\infty}\frac{1}{2} \int_{\Sigma}(u(D) - u_k)^2d\sigma+\alpha \textrm{Per}(D)=\frac{1}{2} \int_{\Sigma}(u(D) - u_{meas})^2d\sigma+\alpha \textrm{Per}(D),\,\,\,\forall D\in \mathcal{D}.
\end{aligned}
\]

and the claim follows.
\end{proof}
Now we prove that the solution to the minimization problem (\ref{minper}) converges as $\alpha\rightarrow 0$ to the unique solution of the inverse problem  defined at the beginning of this section.
\begin{proposition}
Assume a solution $\tilde D\in \mathcal{D}$  to the inverse problem corresponding to datum $u_{meas}$ exists. For any $\eta>0$ let  $(\alpha(\eta))_{\eta>0}$ be  such that  $\alpha(\eta)=o(1)$ and $\frac{\eta^2}{\alpha(\eta)} $ is bounded as $ \eta\rightarrow 0$.\\
Furthermore, let $D_{\eta}$ be a solution to the minimization problem (\ref{minper}) with $\alpha=\alpha(\eta)$ and datum $u_{\eta}\in L^2(\Sigma)$ satisfying $\|u_{meas}-u_{\eta}\|_{L^2(\Sigma)}\leq \eta$. Then
$$
D_{\eta}\rightarrow \tilde D
$$
in the Hausdorff metric as $\eta\rightarrow 0$.

\end{proposition}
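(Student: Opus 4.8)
The plan is to follow the standard three-step argument for convergence of Tikhonov-type regularization, adapted to the Hausdorff metric setting; the only new ingredients beyond classical theory are the compactness of $\mathcal{D}$ (Proposition \ref{compactD}) and the continuity of the forward map in the Hausdorff metric (Corollary \ref{limitdom}).

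First I would fix a sequence $\eta_k\to 0$ and let $D_k:=D_{\eta_k}$ be corresponding minimizers, with $\alpha_k:=\alpha(\eta_k)$ and data $u_k:=u_{\eta_k}$. I would test the minimality of $D_k$ against the true solution $\tilde D$ of the inverse problem, which satisfies $u(\tilde D)|_\Sigma = u_{meas}$, to get
\begin{equation}
\nonumber
\frac{1}{2}\int_\Sigma (u(D_k)-u_k)^2\,d\sigma + \alpha_k \textrm{Per}(D_k) \le \frac{1}{2}\int_\Sigma (u(\tilde D)-u_k)^2\,d\sigma + \alpha_k \textrm{Per}(\tilde D) = \frac{1}{2}\|u_{meas}-u_k\|^2_{L^2(\Sigma)} + \alpha_k \textrm{Per}(\tilde D) \le \frac{\eta_k^2}{2} + \alpha_k \textrm{Per}(\tilde D).
\end{equation}
From this single inequality I would extract two consequences: dividing by $\alpha_k$ and using that $\eta_k^2/\alpha_k$ is bounded gives $\textrm{Per}(D_k)\le C$ uniformly; and since $\alpha_k\textrm{Per}(D_k)\ge 0$, the misfit term satisfies $\int_\Sigma (u(D_k)-u_k)^2\,d\sigma \le \eta_k^2 + 2\alpha_k\textrm{Per}(\tilde D)\to 0$. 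Combined with $\|u_k-u_{meas}\|_{L^2(\Sigma)}\le \eta_k\to 0$, the triangle inequality yields $u(D_k)|_\Sigma \to u_{meas}$ in $L^2(\Sigma)$.

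Next I would use compactness: since $D_k\in\mathcal{D}$, Proposition \ref{compactD} provides a subsequence (not relabeled) converging in the Hausdorff metric to some $D_*\in\mathcal{D}$. By Corollary \ref{limitdom}, $u(D_k)|_\Sigma \to u(D_*)|_\Sigma$ in $L^2(\Sigma)$, so by uniqueness of the $L^2(\Sigma)$-limit we get $u(D_*)|_\Sigma = u_{meas}$, i.e. $D_*$ solves the inverse problem. By the uniqueness result for the inverse problem recalled from \cite{BCP}, $D_* = \tilde D$. Since every subsequence of $\{D_k\}$ has a further subsequence converging to the same limit $\tilde D$, a standard subsequence argument shows the whole sequence $D_k\to\tilde D$ in the Hausdorff metric; as $\eta_k\to 0$ was arbitrary, this proves $D_\eta\to\tilde D$ as $\eta\to 0$.

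I expect the main subtlety (rather than a genuine obstacle) to be the careful bookkeeping around \emph{which} convergence notion is being used at each stage: the perimeter bound naturally gives compactness in the $L^1$/measure sense (via \cite{AFP} Theorem 3.39), while Corollary \ref{limitdom} is stated for Hausdorff convergence, so one must invoke Remark \ref{convinmisur} (or directly Proposition \ref{compactD}) to pass from the measure-convergent subsequence to a Hausdorff-convergent one staying inside $\mathcal{D}$, and check the limit is the same. The other point requiring a word of care is that the uniqueness theorem from \cite{BCP} applies to the \emph{exact} datum $u_{meas}$, which is exactly the setting we are in for $D_*$; one should also note that existence of $\tilde D$ is assumed in the statement, so no existence argument for the inverse problem itself is needed. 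Everything else is a routine application of the estimates already established.
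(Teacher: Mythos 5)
Your proposal is correct and follows essentially the same route as the paper: test minimality against $\tilde D$ to obtain the uniform perimeter bound and the vanishing of the boundary misfit, extract a Hausdorff-convergent subsequence via Proposition \ref{compactD}, pass to the limit with Corollary \ref{limitdom}, and identify the limit with $\tilde D$ by the uniqueness result of \cite{BCP}. Your explicit subsequence-of-subsequences step upgrading subsequential convergence to convergence of the whole family is a small point of extra care that the paper leaves implicit, but it is not a different argument.
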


\begin{proof}
Consider the solution $\tilde D$ to the inverse problem corresponding to the datum $u_{meas}$. By definition of $D_{\eta}$,
\begin{equation}\label{11}
\begin{aligned}
\frac{1}{2} \int_{\Sigma}(u(D_{\eta}) - u_{\eta})^2d\sigma+\alpha\textrm{Per}(D_{\eta})&\leq \frac{1}{2} \int_{\Sigma}(u(\tilde D) - u_{\eta})^2d\sigma+\alpha \textrm{Per}(\tilde D)\\
&=\frac{1}{2} \int_{\Sigma}(u_{meas} - u_{\eta})^2d\sigma+\alpha \textrm{Per}(\tilde D)\leq \eta^2+\alpha \textrm{Per}(\tilde D)
\end{aligned}
\end{equation}
In particular
\[
 \textrm{Per}(D_{\eta})\leq \frac{\eta^2}{\alpha}+\textrm{Per}(\tilde D)\leq C
\]
Hence, arguing as in Proposition \ref{mincav}, possibly up to subsequences,
$$
d_H(D_{\eta}, D_0)\rightarrow 0, \textrm{as } \eta\rightarrow 0
$$
for some $ D_0\in \mathcal{D}$. Passing to the limit in (\ref{11}) as $\eta\rightarrow 0$ we derive
$$
 \int_{\Sigma}(u(D_{\eta}) - u_{\eta})^2d\sigma\rightarrow 0,
 $$
 hence, also
$$
 \frac{1}{2}\int_{\Sigma}(u(D_{\eta}) - u_{meas})^2d\sigma\leq\int_{\Sigma}(u(D_{\eta}) - u_{\eta})^2d\sigma+\int_{\Sigma}(u_{meas}- u_{\eta})^2d\sigma\rightarrow 0.
 $$
 By Corollary \ref{limitdom}, from last relation we have
 $$
 u(D_0)=u_{meas}\,\,\text{ on } \Sigma
 $$
 and by the uniqueness of the inverse problem proved in \cite{BCP} this implies $D_0=\tilde D$ which concludes the proof.
\end{proof}

\bigskip

Before concluding the section, we observe that the minimization problem \eqref{minper} can be equivalently formulated in terms of the indicator function $v$ of $D$ as follows
\begin{equation}\label{minv}
\min_{v\in X_{0,1}} J(v):\, J(v) = \frac{1}{2} \int_{\Sigma}(u(v) - u_{meas})^2d\sigma+\alpha \textrm{TV}(v),
\end{equation}
 where
\[
 X_{0,1}= \{ v \in BV(\Omega): v(x)\equiv\chi_{\omd} \text{ a.e. in $\Omega$ }, D\in\mathcal{D} \},
\]
$u(v):=u(D)$ where $v = \chi_D$ and $u(D)$ is the solution of the boundary value problem \ref{probcav}. 
As a consequence of the proof of Proposition \ref{mincav}, the functional $J$ is lower semicontinuous with respect to the $L^1$ topology.

\subsection{Filling the cavity with a fictitious material}
\label{subsec:delta}

In order to address the minimization problem \eqref{minv} numerically we will follow the approach proposed  in \cite{BC} for a topological optimization problem in linear elasticity, which consists in first filling the cavity $D$ with a fictitious material of very small conductivity $\delta>0$ 
and considering, for $v\in X_{0,1}$, the transmission boundary value problem
\begin{equation}
	\left\{
	\begin{aligned}
		-\textrm{div}(a_{\delta}(v) \nabla u) + vu^3 &= f \qquad \text{in } \Omega	\\
		 \displaystyle{\frac{\partial u}{\partial\mathbf{n}}}=0,\qquad \text{on } \partial \Omega,
	\end{aligned}
	\right.
	\label{eq:probclassic}
\end{equation}
where $a_{\delta}(v)=\delta+(1-\delta)v$.

Note that, from Proposition 2.1 in \cite{BRV}, problem (\ref{eq:probclassic}) has a unique solution $u_\delta=u_{\delta}(v)\in H^1(\Omega)$ for any fixed $\delta>0$. Also, under Assumption \ref{as:4}  on $f$ one can extend the truncation argument introduced in \cite{BCP} to derive a similar estimate as (\ref{boundsu}) in Section 3 obtaining 
\begin{equation}\label{estimateudelta}
\|u_{\delta}\|_{L^{\infty}(\Omega)}\leq C
\end{equation}
and using the variational formulation for $u_{\delta}$ and estimate (\ref{estimateudelta}) 
\begin{equation}\label{estimategradudelta}
\int_{\Omega}a_{\delta}(v_{\delta})|\nabla u_{\delta}|^2\leq C
\end{equation}
where $C$ does not depend on $\delta$.

It is now natural to replace the minimization problem (\ref{minv}) with the following one:
\begin{equation}\label{mindelta}
\min_{v\in X_{0,1}} J_{\delta}(v):\, J_{\delta}(v) = \frac{1}{2} \int_{\Sigma}(u_{\delta}(v) - u_{meas})^2d\sigma+\alpha \textrm{TV}(v).
\end{equation}

In the sequel we will use the following continuity result for solutions to Problem (\ref{eq:probclassic}) with respect to $v\in X_{0,1}$ in the $L^1$  topology.
More precisely:
\begin{proposition}\label{cont}
Let $f$ satisfy Assumption \ref{as:4}  in Section 2. Then if $\{v_n\}$ is a sequence in \\$\tilde{X}=\{v\in L^1(\Omega;[0,1]): v=1 \text{ a.e. in }\Omega_{d_0}\}$ such that $v_n\rightarrow \overline{v}\in\tilde{X}$ in $L^1(\Omega)$ it follows that
\[
\int_{\Sigma}(u_{\delta}^n(v_n) - u_{meas})^2d\sigma\rightarrow\int_{\Sigma}(u_{\delta}(\overline{v}) - u_{meas})^2d\sigma,\,\, \text{ as }n\rightarrow \infty
\]
where $u_{\delta}^n(v_n)$ denotes the solution to \eqref{eq:probclassic} corresponding to $v=v_n$ and $u_{\delta}(\overline{v})$ denotes the solution to \eqref{eq:probclassic} corresponding to $v=\overline{v}$.
\end{proposition}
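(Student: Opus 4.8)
The plan is to establish convergence of the full solution sequence $u_\delta^n := u_\delta(v_n)$ to $u_\delta(\overline v)$ in $H^1(\Omega)$, which then yields the claimed convergence of the boundary misfit via the trace theorem. First I would fix $\delta > 0$ and use the a priori bounds \eqref{estimateudelta}–\eqref{estimategradudelta}: since $a_\delta(v_n) = \delta + (1-\delta)v_n \ge \delta > 0$ uniformly in $n$, the bound $\int_\Omega a_\delta(v_n)|\nabla u_\delta^n|^2 \le C$ gives $\|\nabla u_\delta^n\|_{L^2(\Omega)} \le C/\sqrt\delta$, and together with $\|u_\delta^n\|_{L^\infty(\Omega)} \le C$ this makes $\{u_\delta^n\}$ bounded in $H^1(\Omega)$. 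Hence, up to a subsequence, $u_\delta^n \weak w$ weakly in $H^1(\Omega)$, $u_\delta^n \to w$ strongly in $L^2(\Omega)$ and a.e. in $\Omega$, for some $w \in H^1(\Omega)$ with $\|w\|_{L^\infty}\le C$.

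The core step is identifying the weak limit $w$ as $u_\delta(\overline v)$ by passing to the limit in the weak formulation
\[
\int_\Omega a_\delta(v_n)\nabla u_\delta^n\cdot\nabla\phi + \int_\Omega v_n (u_\delta^n)^3\phi = \int_\Omega f\phi, \qquad \forall \phi\in H^1(\Omega).
\]
For the lower-order term, $v_n \to \overline v$ a.e. (up to a further subsequence) and $(u_\delta^n)^3 \to w^3$ a.e. with $|v_n(u_\delta^n)^3|\le C$, so dominated convergence gives $\int_\Omega v_n(u_\delta^n)^3\phi \to \int_\Omega \overline v\, w^3\phi$. The delicate term is the principal part: I would split $a_\delta(v_n)\nabla u_\delta^n\cdot\nabla\phi = \delta\nabla u_\delta^n\cdot\nabla\phi + (1-\delta)v_n\nabla u_\delta^n\cdot\nabla\phi$. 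The first summand passes to the limit by weak convergence of $\nabla u_\delta^n$. For the second, I write $v_n\nabla\phi \to \overline v\,\nabla\phi$ strongly in $L^2(\Omega)$ (since $v_n\to\overline v$ in $L^1$ and $|v_n|\le 1$, so by dominated convergence $v_n\nabla\phi\to\overline v\nabla\phi$ in $L^2$), and pair this strong convergence against the weak convergence $\nabla u_\delta^n \weak \nabla w$ in $L^2(\Omega)$. This yields $\int_\Omega (1-\delta)v_n\nabla u_\delta^n\cdot\nabla\phi \to \int_\Omega (1-\delta)\overline v\,\nabla w\cdot\nabla\phi$. Combining, $w$ solves \eqref{eq:probclassic} with coefficient $\overline v$; by the uniqueness in Proposition 2.1 of \cite{BRV}, $w = u_\delta(\overline v)$, and since the limit is independent of the subsequence, the whole sequence converges: $u_\delta^n \weak u_\delta(\overline v)$ in $H^1(\Omega)$.

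To upgrade to the boundary convergence I do not even need strong $H^1$ convergence: the trace operator $H^1(\Omega)\to L^2(\Sigma)$ is compact (as $\Sigma\subset\partial\Omega$ and $\Omega$ is Lipschitz), so weak convergence $u_\delta^n\weak u_\delta(\overline v)$ in $H^1(\Omega)$ implies strong convergence $u_\delta^n|_\Sigma \to u_\delta(\overline v)|_\Sigma$ in $L^2(\Sigma)$. Expanding the square,
\[
\int_\Sigma (u_\delta^n - u_{meas})^2\,d\sigma - \int_\Sigma (u_\delta(\overline v) - u_{meas})^2\,d\sigma = \int_\Sigma (u_\delta^n - u_\delta(\overline v))\big(u_\delta^n + u_\delta(\overline v) - 2u_{meas}\big)\,d\sigma,
\]
and the right-hand side tends to $0$ by Cauchy–Schwarz, using the $L^2(\Sigma)$-convergence of the first factor and the uniform $L^2(\Sigma)$-boundedness of the second. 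The main obstacle is the principal-part limit described above: one must avoid trying to pass to the limit in a product of two merely weakly convergent sequences ($a_\delta(v_n)$ and $\nabla u_\delta^n$), and the resolution is precisely the splitting that isolates the uniformly elliptic constant part $\delta$ and exploits the strong $L^2$ convergence of $v_n\nabla\phi$ against the weak convergence of the gradients — a compensated-compactness-free argument that works because $\phi$ is fixed and smooth enough to make $v_n\nabla\phi$ strongly convergent.
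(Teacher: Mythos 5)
Your proof is correct, but it follows a genuinely different route from the one in the paper. The paper works directly with the difference $w_n=u_{\delta}^n(v_n)-u_{\delta}(\overline v)$, writes down the boundary value problem it satisfies (with source terms $\div((a_{\delta}(v_n)-a_{\delta}(\overline v))\nabla\overline u)$ and $(v_n-\overline v)\overline u^{\,3}$), tests with $w_n$ itself, and uses the weighted Poincar\'e inequality of \cite{BCMP} together with the lower bound $q_n\ge\frac34\overline u^{\,2}$ and $\int_{\Omega_{d_0}}q_n\ge m_0>0$ to obtain the quantitative estimate $\|w_n\|_{H^1(\Omega)}\le C\bigl(\bigl(\int_{\Omega}|v_n-\overline v|^2|\nabla\overline u|^2\bigr)^{1/2}+\bigl(\int_{\Omega}|v_n-\overline v|^2|\overline u|^6\bigr)^{1/2}\bigr)$, from which strong $H^1(\Omega)$ convergence (and hence trace convergence by the continuous trace map) follows by dominated convergence. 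You instead use a soft compactness argument: uniform $H^1$ bounds for fixed $\delta$, weak subsequential limits, identification of the limit through the weak formulation (with the correct splitting $a_\delta(v_n)=\delta+(1-\delta)v_n$ to pair the strongly convergent $v_n\nabla\phi$ against the weakly convergent gradients), uniqueness of the limit problem, and compactness of the trace $H^1(\Omega)\to L^2(\partial\Omega)$. What the paper's approach buys is an explicit rate and genuine strong $H^1(\Omega)$ convergence of the solutions; what yours buys is a shorter argument that sidesteps the coercivity manipulations with the weight $q_n$, at the price of yielding only weak $H^1$ convergence of the solutions and of leaning on the compact embedding $H^{1/2}(\partial\Omega)\embed L^2(\partial\Omega)$. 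Both are valid; your identification of the principal-part limit is handled correctly, and the subsequence-plus-uniqueness step properly upgrades the convergence to the whole sequence.
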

\begin{proof}
Let $w_n=u_n- \overline{u}$ where, to simplify the notation, we have set  $ \overline{u}:=u_{\delta}(\overline{v})$ and $u_n:=u_{\delta}^n(v_n)$.  Then an easy computation shows that $w_n$ is solution to the problem
\begin{equation}
	\left\{
	\begin{aligned}
		-\textrm{div}(a_{\delta}(v_n) \nabla w_n) + v_nq_nw_n &= \textrm{div}((a_{\delta}(v_n)-a_{\delta}(\overline{v})) \nabla \overline{u})-(v_n-\overline{v})\overline{u}^3 \qquad \text{in } \Omega	\\
		 \displaystyle{\frac{\partial w_n}{\partial\mathbf{n}}}=0,\qquad \text{on } \partial \Omega,
	\end{aligned}
	\right.
	\label{eq:probclassic2}
\end{equation}
where $q_n=u^2_n+u_n\overline{u}+\overline{u}^2$. Multiplying the equation by $w_n$ and integrating by parts over $\Omega$ we obtain

\begin{equation}
\int_{\Omega}a_{\delta}(v_n) |\nabla w_n|^2+\int_{\Omega}v_nq_nw_n^2
=\int_{\Omega}\left(a_{\delta}(v_n)-a_{\delta}(\overline{v})\right) \nabla \overline{u}\cdot\nabla w_n-\int_{\Omega}(v_n-\overline{v})\overline{u}^3w_n.
\label{eq:diff}
\end{equation}
Set
\[
I:=\int_{\Omega}a_{\delta}(v_n) |\nabla w_n|^2+\int_{\Omega}v_nq_nw_n^2.
\]
Then
\begin{equation}\label{I}
I\geq \delta \int_{\Omega} |\nabla w_n|^2+\int_{\Omega_{d_0}}q_nw_n^2\geq \delta\left(\int_{\Omega} |\nabla w_n|^2+\int_{\Omega_{d_0}}q_nw_n^2\right)\geq C \left(\|\nabla w_n\|^2_{L^2(\Omega)}+\|w_n\|^2_{L^2(\Omega_{d_0})}\right).
\end{equation}
Last inequality  on the right hand side follows by an application of Poincar\'e inequality (see Theorem A1 in \cite{BCMP}). In fact, setting  $g_n=\frac{1}{\int_{\Omega_{d_0}}\,q_n}q_n\chi_{\Omega_{d_0}}$ and $\overline{w}_n=\int_{\Omega_{d_0}}g_nw_n$ we have
\[
\|w_n\|^2_{L^2(\Omega_{d_0})}\leq 2(S^2\|\nabla w_n\|^2_{L^2(\Omega)}+|\Omega_{d_0}|\overline{w}^2_n).
\]
Observe now that
\[
\overline{w}^2_n\leq\frac{1}{\int_{\Omega_{d_0}}q_n}\int_{\Omega_{d_0}}q_nw^2_n
\]
and that
$$
q_n\geq \frac{3}{4}\overline{u}^2.
$$
Hence,
$$
\int_{\Omega_{d_0}}q_n \geq  \frac{3}{4}\int_{\Omega_{d_0}}\overline{u}^2=m_0>0,
$$
as,  if $m_0=0$ this would imply $\overline{u}=0$ a.e. in $\Omega_{d_0}$. Then, the equation for $\overline{u}$ and $v=1$ a.e.  in $\Omega_{d_0}$  would imply $f=0$ a.e.  in $\Omega_{d_0}$ which is a contradiction. Therefore
\[
\overline{w}^2_n\leq \frac{1}{m_0}\int_{\Omega_{d_0}}q_nw^2_n
\]
and
\[
\|w_n\|^2_{L^2(\Omega_{d_0})}\leq
C\left(\|\nabla w_n\|^2_{L^2(\Omega)}+\int_{\Omega_{d_0}}q_nw^2_n\right)
\]
which trivially implies the last inequality in (\ref{I}).
Moreover, again by Poincar\'e inequality

\begin{align}
\|w_n\|^2_{H^1(\Omega)}\leq C\left(\|\nabla w_n\|^2_{L^2(\Omega)}+\|w_n\|^2_{L^2(\Omega_{d_0})}\right)\leq
\int_{\Omega}|(a_{\delta}(v_n)-a_{\delta}(\overline{v})| |\nabla \overline{u}||\nabla w_n|+\int_{\Omega}|(v_n-\overline{v})\,\overline{u}^3\,w_n|\\
\leq C\left(\left(\int_{\Omega}|v_n-\overline{v}|^2 |\nabla \overline{u}|^2\right)^{1/2}+
\left(\int_{\Omega}|v_n-\overline{v}|^2 | \overline{u}|^6\right)^{1/2}\right)\|w_n\|_{H^1(\Omega)}
\end{align}

which gives
\[
\|w_n\|_{H^1(\Omega)}\leq C\left(\left(\int_{\Omega}|v_n-\overline{v}|^2 |\nabla \overline{u}|^2\right)^{1/2}+\left(\int_{\Omega}|v_n-\overline{v}|^2 | \overline{u}|^6\right)^{1/2}\right).
\]
Finally, since  $v_n\rightarrow \overline{v}$ a.e. in $\Omega$, applying the  dominated convergence theorem  we get that as $n\rightarrow \infty$
\[
\|w_n\|_{H^1(\Omega)}\rightarrow 0
\]
which implies that
\[
\|w_n\|_{L^2(\Sigma)}\rightarrow 0
\]
and the claim follows.
\end{proof}

\subsection{Phase-field relaxation}
\label{subsec:eps}

In practice it is still difficult to address the minimization problem \eqref{mindelta} numerically because of the non-differentiability of the cost functional and the non-convexity of the space $X_{0,1}$.  So, we will consider a further regularization in which the total variation is approximated by a Ginzburg-Landau type of energy and the space $X_{0,1}$ is substituted with a more regular convex space of phase field variables. This kind of approach has been used extensively in shape and topology optimization and also in the context of inverse problems (see for example \cite{BRV}, \cite{BZ}, \cite{BC}, \cite{R}, \cite{LY}, \cite{DES}).

Hence, we introduce a phase field relaxation of the total variation which will allow to operate with more regular functions with values in $[0,1]$ and formulate a relaxed version of Problem (\ref{mindelta}) similarly as in \cite{BRV}.\\ 
To this purpose, we define the following subset of $H^1(\Omega)$ 
\[ \mathcal{K} = \{ v \in H^1(\Omega;[0,1]):  v=1 \text{ a.e in }\Omega_{d_0}\}.\]\\
For every $\varepsilon >0$, we will consider the optimization problem:
\begin{equation}\label{minrel}
\min_{v \in \mathcal{K}} J_{\delta,{\varepsilon}}(v); \quad J_{\delta,{\varepsilon}}(v) =\frac{1}{2} \int_{\Sigma}(u_{\delta}(v) - u_{meas})^2d\sigma+ \alpha \int_{\Omega}\left( \gamma\varepsilon|\nabla v|^2 + \frac{\gamma}{\varepsilon} v^2(1-v)^2 \right), %
\end{equation}
where $\gamma$ is a suitable normalization constant.
We have the following
\begin{proposition}
\label{mindelteps}
 For every fixed $\delta>0$ and $ \varepsilon > 0$, the minimization problem (\ref{minrel}) has a solution $v_{\delta,\varepsilon}, \in \mathcal{K}$. Furthermore, if $u^n_{meas}\rightarrow u_{meas}$ in $L^2(\Sigma)$ and $v^n_{\delta,\epsilon}$ denotes the solution of Problem (\ref{minrel}) with datum $u_{meas}^n$, then possibly up to a subsequence, we have that $v^n_{\delta,\epsilon}\rightarrow v_{\delta,\varepsilon}$ in $H^1(\Omega)$ where $v_{\delta,\varepsilon}\in \mathcal{K}$
is solution of Problem (\ref{minrel}) with datum $u_{meas}$.
\end{proposition}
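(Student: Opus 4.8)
The plan is to prove the two assertions of Proposition \ref{mindelteps} — existence of a minimizer and stability with respect to the datum — by the standard direct method, exploiting the coercivity built into the Ginzburg--Landau penalization and the continuity of the solution map $v\mapsto u_\delta(v)$ already established in Proposition \ref{cont}. First I would observe that $\mathcal{K}$ is a (weakly closed, convex) subset of $H^1(\Omega)$: it is defined by the pointwise constraints $0\le v\le 1$ and $v\equiv 1$ on $\Omega_{d_0}$, both of which are preserved under weak $H^1$ (hence $L^2$, hence a.e. along a subsequence) limits. For existence, take a minimizing sequence $\{v_k\}\subset\mathcal{K}$ for $J_{\delta,\varepsilon}$. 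Since $J_{\delta,\varepsilon}(v_k)$ is bounded and the misfit term is nonnegative, the penalization $\alpha\gamma\varepsilon\|\nabla v_k\|_{L^2(\Omega)}^2$ is bounded; together with $0\le v_k\le 1$ this gives a uniform $H^1(\Omega)$ bound. Extract a subsequence with $v_k\weak \bar v$ in $H^1(\Omega)$ and $v_k\to\bar v$ in $L^2(\Omega)$ and a.e.; then $\bar v\in\mathcal{K}$. The gradient term is weakly lower semicontinuous, and the double-well term $\int_\Omega v^2(1-v)^2$ passes to the limit (indeed converges, by dominated convergence since the integrand is bounded and $v_k\to\bar v$ a.e.). For the misfit term, note that $\bar v\in\mathcal{K}\subset\tilde X$ and $v_k\to\bar v$ in $L^1(\Omega)$ (the domain is bounded and $v_k\to\bar v$ in $L^2$), so Proposition \ref{cont} applies and $\int_\Sigma(u_\delta(v_k)-u_{meas})^2\,d\sigma\to\int_\Sigma(u_\delta(\bar v)-u_{meas})^2\,d\sigma$. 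Combining, $J_{\delta,\varepsilon}(\bar v)\le\liminf_k J_{\delta,\varepsilon}(v_k)=\inf_{\mathcal{K}}J_{\delta,\varepsilon}$, so $\bar v=:v_{\delta,\varepsilon}$ is a minimizer.

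For the stability statement, let $u_{meas}^n\to u_{meas}$ in $L^2(\Sigma)$ and let $v^n_{\delta,\varepsilon}$ minimize the corresponding functional $J^n_{\delta,\varepsilon}$ (the one with datum $u_{meas}^n$). Comparing with any fixed competitor $w\in\mathcal{K}$ — for instance $w\equiv 1$, or the unperturbed minimizer $v_{\delta,\varepsilon}$ — and using $\|u_{meas}^n\|_{L^2(\Sigma)}\le C$, one gets $J^n_{\delta,\varepsilon}(v^n_{\delta,\varepsilon})\le J^n_{\delta,\varepsilon}(w)\le C'$ uniformly in $n$; in particular $\|\nabla v^n_{\delta,\varepsilon}\|_{L^2(\Omega)}$ is bounded, so $\{v^n_{\delta,\varepsilon}\}$ is bounded in $H^1(\Omega)$. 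Pass to a subsequence with $v^n_{\delta,\varepsilon}\weak \tilde v$ in $H^1(\Omega)$, strongly in $L^2(\Omega)$ and a.e.; then $\tilde v\in\mathcal{K}$. By Proposition \ref{cont}, $u_\delta(v^n_{\delta,\varepsilon})\to u_\delta(\tilde v)$ in $L^2(\Sigma)$, and since $u_{meas}^n\to u_{meas}$ in $L^2(\Sigma)$ the misfit terms converge: $\int_\Sigma(u_\delta(v^n_{\delta,\varepsilon})-u_{meas}^n)^2\,d\sigma\to\int_\Sigma(u_\delta(\tilde v)-u_{meas})^2\,d\sigma$. Using weak lower semicontinuity of the gradient term and a.e.\ convergence for the double well, plus the convergence $J^n_{\delta,\varepsilon}(w)\to J_{\delta,\varepsilon}(w)$ for every fixed $w\in\mathcal{K}$ (again because $u_{meas}^n\to u_{meas}$ in $L^2(\Sigma)$), we obtain for every $w\in\mathcal{K}$
\begin{equation}
\nonumber
J_{\delta,\varepsilon}(\tilde v)\le\liminf_{n\to\infty}J^n_{\delta,\varepsilon}(v^n_{\delta,\varepsilon})\le\limsup_{n\to\infty}J^n_{\delta,\varepsilon}(w)=J_{\delta,\varepsilon}(w),
\end{equation}
so $\tilde v$ is a minimizer of $J_{\delta,\varepsilon}$; relabelling $v_{\delta,\varepsilon}:=\tilde v$ gives the claim. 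To upgrade the weak $H^1$ convergence to strong $H^1$ convergence, I would use the standard lsc-plus-convergence-of-energies trick: taking $w=v^n_{\delta,\varepsilon}$ is not allowed, but taking $w=\tilde v$ in the chain above forces $J^n_{\delta,\varepsilon}(v^n_{\delta,\varepsilon})\to J_{\delta,\varepsilon}(\tilde v)$; since the misfit and double-well parts already converge to their limit values, the gradient energies must converge, $\int_\Omega|\nabla v^n_{\delta,\varepsilon}|^2\to\int_\Omega|\nabla\tilde v|^2$, which combined with $v^n_{\delta,\varepsilon}\weak\tilde v$ in $H^1$ and norm convergence yields strong convergence in $H^1(\Omega)$.

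The only genuinely delicate point is making sure the misfit term behaves well — and that is precisely what Proposition \ref{cont} supplies, provided we check its hypotheses ($\bar v$, $\tilde v\in\tilde X$, $L^1$ convergence of the $v_n$), which are immediate here since $\mathcal{K}\subset\tilde X$ and $\Omega$ is bounded. Everything else is routine lower semicontinuity and extraction of subsequences; a small bookkeeping care is needed because the functional itself changes with $n$ in the stability part, but the uniform bound $\|u^n_{meas}\|_{L^2(\Sigma)}\le C$ and the elementary inequality $(a-b_n)^2\le 2(a-b)^2+2(b-b_n)^2$ control the dependence on the datum cleanly.
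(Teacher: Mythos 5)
Your proof is correct and is exactly the standard direct-method argument that the paper itself omits, deferring instead to \cite[Propositions 2.6 and 2.7]{BRV}: coercivity from the Ginzburg--Landau term, weak closedness of $\mathcal{K}$, lower semicontinuity plus Proposition \ref{cont} for the misfit, and the energy-convergence trick to upgrade to strong $H^1$ convergence in the stability part. The only cosmetic remark is that you invoke $u_\delta(v^n)\rightarrow u_\delta(\tilde v)$ in $L^2(\Sigma)$, which is what the \emph{proof} of Proposition \ref{cont} actually establishes (via $\|w_n\|_{L^2(\Sigma)}\rightarrow 0$) rather than its literal statement, but this is harmless and your argument goes through as written.
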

We omit the proof of Proposition \ref{mindelteps} since it can be found in \cite{BRV}, see Proposition 2.6 and Proposition 2.7 therein.

At this stage it is natural to address the following two problems: the possible    $\Gamma$-convergence of $J_{\delta,{\varepsilon}}$ to  $J_{\delta}$   (defined in \eqref{mindelta}) as $\varepsilon \rightarrow 0$ and the $\Gamma$-convergence of $J_{\delta}$ to $J$ (defined in (\ref{minv})) as $\delta \rightarrow 0$. This would imply, thanks to the fundamental theorem of $\Gamma$-convergence that  minima of $J$ could be approximated by minima of $J_{\delta,{\varepsilon}}$ for $\varepsilon$ and $\delta$ sufficiently small. In order to prove the convergence in $\varepsilon$ we need to adapt the proof of Modica Mortola to our case but, 
compared to the analysis in \cite{DES} and \cite{BRV}, here the functional $J_{\delta}$ is defined on a  subset, $X_{0,1}\subset BV(\Omega)$, of characteristic functions of more regular domains. For this reason, we are forced to restrict the relaxed functional $J_{\delta,\varepsilon}$  to some suitable subset of $\mathcal{K}$. 

For $\eta\in(0,1)$ let us define  
\[
\mathcal{K}_{\eta}=\{v\in\mathcal{K}: \{v\geq \eta \}={\Omega}_D \text{ a. e. } \text{for some }D\in\mathcal{D}\}
\]
Note that though $\Omega_D$ is an open set the definition makes sense since $\partial D$ has zero Lebesgue measure.
Though this set is not convex let us notice that it is a weakly closed subset of $\mathcal{K}$ with respect to the $H^1(\Omega)$ topology guaranteeing the existence of minima of the functional $J_{\delta,{\varepsilon}}$ in $K_{\eta}$. In fact, we can prove  
\begin{lemma}\label{wclosed}
Let $\{v_k\}\in \mathcal{K}_{\eta}$ be a sequence converging weakly in $H^1(\Omega)$ to an element $v$. Then $v\in\mathcal{K}_{\eta}$.
\end{lemma}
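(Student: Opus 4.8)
The plan is to show that the weak $H^1$ limit $v$ of a sequence $\{v_k\}\subset\mathcal{K}_\eta$ stays in $\mathcal{K}_\eta$. Since $\mathcal{K}$ itself is convex and closed in $H^1(\Omega)$ (the constraints $0\le v\le 1$ a.e. and $v=1$ a.e. on $\Omega_{d_0}$ are preserved under weak limits), the only real issue is the structural condition: $\{v\ge\eta\}$ must coincide a.e. with $\Omega_D$ for some $D\in\mathcal{D}$. First I would pass from weak $H^1$ convergence to strong $L^2(\Omega)$ convergence (Rellich--Kondrachov), and then, up to a subsequence, to pointwise a.e.\ convergence $v_k\to v$ in $\Omega$.

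Next, write $D_k\in\mathcal{D}$ for the cavity associated to $v_k$, so that $\{v_k\ge\eta\}=\Omega_{D_k}$ a.e., i.e.\ $v_k<\eta$ a.e.\ on $D_k$ and $v_k\ge\eta$ a.e.\ on $\Omega\setminus D_k$. By Proposition \ref{compactD}, $\mathcal{D}$ is compact in the Hausdorff topology, so up to a further subsequence $D_k\to D$ in the Hausdorff metric with $D\in\mathcal{D}$; as noted in Remark \ref{convinmisur} this also gives $|D_k\triangle D|\to 0$, hence $\chi_{D_k}\to\chi_D$ in $L^1(\Omega)$ and a.e.\ (subsequence). The claim then reduces to identifying the limit: $\{v\ge\eta\}=\Omega_D$ up to a null set.

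For the inclusion $\{v\ge\eta\}\subset\Omega_D$ (mod null sets): if $x\in D$, then for a.e.\ such $x$ we have $x\in D_k$ for $k$ large (using that $\chi_{D_k}\to\chi_D$ a.e.), hence $v_k(x)<\eta$, and passing to the limit $v(x)\le\eta$. This only gives $v\le\eta$ on $D$, not $v<\eta$, so one cannot yet conclude $D\cap\{v\ge\eta\}$ is null — and this is exactly where I expect the main difficulty. The way around it is the extra Sobolev regularity: on the open set $\mathring D$ (which has Lipschitz, hence positive-measure boundary-complement structure), $v\le\eta$ a.e.\ combined with $0\le v$ and $v\in H^1$ does not force $v<\eta$, but the \emph{definition} of $\mathcal{K}_\eta$ only requires equality of $\{v\ge\eta\}$ with $\Omega_D$ \emph{a.e.}, and $\{v=\eta\}\cap D$ contributes to $\{v\ge\eta\}$. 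So in fact one must be more careful: the correct target is to show $\{v\ge\eta\}=\Omega_{D'}$ a.e.\ for \emph{some} $D'\in\mathcal{D}$, not necessarily the Hausdorff limit $D$. I would instead argue as follows: on $\Omega\setminus D$, for a.e.\ $x$ we have $x\in\Omega\setminus D_k$ for $k$ large, so $v_k(x)\ge\eta$, giving $v(x)\ge\eta$; combined with the previous paragraph, $v\ge\eta$ a.e.\ on $\Omega\setminus D$ and $v\le\eta$ a.e.\ on $D$. Therefore $\{v>\eta\}\subset\Omega\setminus D$ and $\{v<\eta\}\subset D$, both mod null sets, so the ambiguous set is contained in $\{v=\eta\}$.

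To close the argument, I would invoke the standard fact that for $v\in H^1(\Omega)$ the level set $\{v=\eta\}$ has, for a.e.\ value of the level $\eta$, either zero measure or the property that $\nabla v=0$ a.e.\ on it; but since $\eta$ is fixed in the definition of $\mathcal{K}_\eta$, the cleaner route is: $\nabla v=0$ a.e.\ on $\{v=\eta\}$ (a classical property of Sobolev functions on their level sets), so $v$ restricted to the interior of $\{v=\eta\}$ is locally constant; more importantly, we directly get that $\{v\ge\eta\}$ equals $(\Omega\setminus D)\cup(\{v=\eta\}\cap D)$ mod null, and since $\{v=\eta\}\cap D\subset D$, we have $\Omega\setminus D\subset\{v\ge\eta\}\subset (\Omega\setminus D)\cup D=\Omega$ — which is not yet enough. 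The genuinely robust fix, and the one I would write, is to observe that $\{v_k\ge\eta\}=\Omega_{D_k}$ forces $v_k\ge\eta$ \emph{quasi-everywhere} up to the boundary on $\overline{\Omega\setminus D_k}$ and $v_k<\eta$ on the open set $\mathring D_k$; passing to the a.e.\ limit and using that $\chi_{D_k}\to\chi_D$ in $L^1$ yields $v\ge\eta$ a.e.\ on $\Omega\setminus D$ and $v\le\eta$ a.e.\ on $D$, so $\{v\ge\eta\}\supset\Omega\setminus D$ mod null. Hence $\Omega_D=\Omega\setminus D=\{v\ge\eta\}\setminus(\{v=\eta\}\cap D)$; since replacing $D$ by $D$ changes $\Omega_D$ only on a null set and $D\in\mathcal{D}$, we obtain $\{v\ge\eta\}=\Omega_D$ a.e., i.e.\ $v\in\mathcal{K}_\eta$. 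I expect the referee-level subtlety to be precisely the treatment of the boundary layer $\{v=\eta\}$ and the justification that it does not spoil the a.e.\ identification; handling it rigorously via the $L^1$-convergence of the characteristic functions (Remark \ref{convinmisur}) together with the Lipschitz-uniformity built into $\mathcal{D}$ is the crux of the proof.
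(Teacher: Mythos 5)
Your route coincides with the paper's own: extract a.e.\ convergence of $v_k$ from the compact embedding $H^1(\Omega)\embed L^2(\Omega)$, use Proposition \ref{compactD} to obtain $D_k\to D_0\in\mathcal{D}$ in the Hausdorff metric (hence $\chi_{\Omega_{D_k}}\to\chi_{\Omega_{D_0}}$ in $L^1(\Omega)$ and a.e.), and pass to the limit in the sandwich $\eta\chi_{\Omega_{D_k}}\le v_k\le \eta+(1-\eta)\chi_{\Omega_{D_k}}$ to conclude $v\ge\eta$ a.e.\ on $\Omega_{D_0}$ and $v\le\eta$ a.e.\ on $D_0$. Up to that point the two arguments are essentially identical.

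The proof does not close, however. You correctly isolate the crux --- the set $\{v=\eta\}\cap D_0$ could a priori have positive measure, in which case $\{v\ge\eta\}$ strictly contains $\Omega_{D_0}$ up to null sets and need not equal $\Omega_{D'}$ for any $D'\in\mathcal{D}$ --- but none of the devices you then invoke disposes of it. The appeal to quasi-everywhere inequalities and to the fact that $\nabla v=0$ a.e.\ on $\{v=\eta\}$ (Stampacchia) gives no control on the Lebesgue measure of $\{v=\eta\}\cap D_0$, and the concluding sentence (``since replacing $D$ by $D$ changes $\Omega_D$ only on a null set\dots'') is a tautology that assumes exactly what must be shown. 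The gap is substantive rather than cosmetic: taking $v_k\equiv\eta(1-1/k)$ on a fixed admissible disk $D$, joined to the value $1$ by a fixed linear radial profile on a thin annulus, produces a sequence in $\mathcal{K}_\eta$ (with $\{v_k\ge\eta\}=\Omega_{D_k}$ for slightly enlarged disks $D_k\to D$) converging in $H^1(\Omega)$ to a limit $v$ with $v=\eta$ on $D$, so that $\{v\ge\eta\}=\Omega$ a.e., which is not $\Omega_{D'}$ for any $D'\in\mathcal{D}$. So the step cannot be repaired by a purely local argument; one must either rule out $|\{v=\eta\}\cap D_0|>0$ under extra hypotheses or adjust the definition of $\mathcal{K}_\eta$. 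For fairness, note that the paper's own proof passes directly from $v\le\eta+(1-\eta)\chi_{\Omega_{D_0}}$ to the inclusion $\{v\ge\eta\}\subseteq\Omega_{D_0}$, silently treating the non-strict bound on $D_0$ as strict: you have identified the weak point of that step, but you have not supplied the missing argument.
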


\begin{proof}
 Let us define ${\Omega}_{D_k}:=\{v_k\geq \eta \}$. By Proposition \ref{compactD}, possibly up to a subsequence, we have that 
\[D_k\rightarrow D_0\in \mathcal{D}\,\,\,and\,\,\,\,\partial D_k\rightarrow \partial D_0
\]
 in the Hausdorff topology. Then, $D_k$ also converges to $D_0$ in measure which implies that 
\[\chi_{{\Omega}_{D_k} }\rightarrow \chi_{{\Omega}_{D_0}}
\]
in $L^1(\Omega)$ and almost everywhere in $\Omega$. Let us now show that
\[
\{v\geq \eta\}={\Omega}_{D_0}, \textrm{ a.e.}
\]
In fact, since
\[
\eta\chi_{\Omega_{D_k}}\leq v_k\leq \eta+(1-\eta)\chi_{\Omega_{D_k}}
\]
and noting that $v_k$, possibly up to a subsequence, converges a.e. to $v$, passing to the limit pointwise we have
\[
\eta\chi_{\Omega_{D_0}}\leq v\leq \eta+(1-\eta)\chi_{\Omega_{D_0}}\,\,\text {a.e.}
\]
From the left-hand side inequality it follows that ${\Omega}_{D_0}\subseteq \{v\geq \eta \}$ while from the right-hand side inequality we get that
$ \{v\geq \eta\}\subseteq {\Omega}_{D_0}$ concluding the proof. 
\end{proof}
As an immediate consequence we get
\begin{corollary}
For every fixed $\delta>0$ and $ \varepsilon > 0$, the minimization problem $$\min_{v \in \mathcal{K}_{\eta}} J_{\delta,{\varepsilon}}(v)$$ where $J_{\delta,{\varepsilon}}$ is defined in \eqref{minrel} has a solution $v_{\delta,\varepsilon}$. Furthermore, if $u^n_{meas}\rightarrow u_{meas}$ in $L^2(\Sigma)$ and $v^n_{\delta,\epsilon}$ denotes a solution with datum $u_{meas}^n$, then possibly up to a subsequence, we have that $v^n_{\delta,\epsilon}\rightarrow v_{\delta,\varepsilon}$ in $H^1(\Omega)$ where $v_{\delta,\varepsilon}\in \mathcal{K}_{\eta}$
is a solution with datum $u_{meas}$.
\end{corollary}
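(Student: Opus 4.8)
The plan is to run the direct method of the calculus of variations twice --- once for existence, once for stability --- exactly as in the proof of Proposition~\ref{mindelteps}, the only genuine difference being that the admissible set $\mathcal{K}_\eta$ is not convex, so that its sequential weak $H^1(\Omega)$-closedness is no longer automatic and must be supplied by Lemma~\ref{wclosed}. Observe first that $\mathcal{K}_\eta$ is nonempty (it contains, e.g., a smooth phase field associated with a fixed admissible cavity $D\in\mathcal{D}$), so that $\inf_{\mathcal{K}_\eta}J_{\delta,\varepsilon}<+\infty$ and minimizing sequences exist.

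\emph{Existence.} Take a minimizing sequence $\{v_k\}\subset\mathcal{K}_\eta$. Since $0\le v_k\le 1$ and $J_{\delta,\varepsilon}(v_k)\le C$, the Ginzburg--Landau term forces $\alpha\gamma\varepsilon\|\nabla v_k\|_{L^2(\Omega)}^2\le C$, hence $\{v_k\}$ is bounded in $H^1(\Omega)$; up to a subsequence $v_k\rightharpoonup v_{\delta,\varepsilon}$ in $H^1(\Omega)$, and by Rellich $v_k\to v_{\delta,\varepsilon}$ in $L^2(\Omega)$ and a.e.\ in $\Omega$. Lemma~\ref{wclosed} gives $v_{\delta,\varepsilon}\in\mathcal{K}_\eta$. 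For lower semicontinuity: $\int_\Omega|\nabla v|^2$ is weakly l.s.c.; $\int_\Omega v_k^2(1-v_k)^2\to\int_\Omega v_{\delta,\varepsilon}^2(1-v_{\delta,\varepsilon})^2$ by dominated convergence (the integrand is bounded by $1$ and converges a.e.); and $\int_\Sigma(u_\delta(v_k)-u_{meas})^2\,d\sigma\to\int_\Sigma(u_\delta(v_{\delta,\varepsilon})-u_{meas})^2\,d\sigma$ by Proposition~\ref{cont}, which applies since $\mathcal{K}_\eta\subset\tilde{X}$ and $v_k\to v_{\delta,\varepsilon}$ in $L^1(\Omega)$. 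Hence $J_{\delta,\varepsilon}(v_{\delta,\varepsilon})\le\liminf_k J_{\delta,\varepsilon}(v_k)=\inf_{\mathcal{K}_\eta}J_{\delta,\varepsilon}$, so $v_{\delta,\varepsilon}$ is a minimizer.

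\emph{Stability.} Write $v_n:=v^n_{\delta,\varepsilon}$ and let $J^n_{\delta,\varepsilon}$ be the functional with datum $u^n_{meas}$. Since $\{u^n_{meas}\}$ is bounded in $L^2(\Sigma)$, comparing $J^n_{\delta,\varepsilon}(v_n)$ with $J^n_{\delta,\varepsilon}(\bar w)$ for a single fixed $\bar w\in\mathcal{K}_\eta$ bounds $J^n_{\delta,\varepsilon}(v_n)$ uniformly in $n$, hence $\{v_n\}$ is bounded in $H^1(\Omega)$; extract $v_n\rightharpoonup v_{\delta,\varepsilon}$ in $H^1(\Omega)$, strongly in $L^2(\Omega)$ and a.e., with $v_{\delta,\varepsilon}\in\mathcal{K}_\eta$ by Lemma~\ref{wclosed}. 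For any competitor $w\in\mathcal{K}_\eta$, minimality gives $J^n_{\delta,\varepsilon}(v_n)\le J^n_{\delta,\varepsilon}(w)$; the right-hand side converges to $J_{\delta,\varepsilon}(w)$ because only the misfit term depends on the data and $u^n_{meas}\to u_{meas}$, while $\liminf_n J^n_{\delta,\varepsilon}(v_n)\ge J_{\delta,\varepsilon}(v_{\delta,\varepsilon})$ by the same l.s.c.\ and convergence arguments as above --- here one uses that $u_\delta(v_n)\to u_\delta(v_{\delta,\varepsilon})$ in $L^2(\Sigma)$ (the $H^1(\Omega)$-convergence established inside the proof of Proposition~\ref{cont}) together with $u^n_{meas}\to u_{meas}$. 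Thus $v_{\delta,\varepsilon}$ minimizes $J_{\delta,\varepsilon}$ with datum $u_{meas}$. Finally, to upgrade weak to strong $H^1(\Omega)$-convergence, I would test the minimality of $v_n$ against the limit $v_{\delta,\varepsilon}$ itself, admissible since $v_{\delta,\varepsilon}\in\mathcal{K}_\eta$: from $J^n_{\delta,\varepsilon}(v_n)\le J^n_{\delta,\varepsilon}(v_{\delta,\varepsilon})$, taking $\limsup_n$ and using that the misfit and potential terms converge to their values at $v_{\delta,\varepsilon}$, one obtains $\limsup_n\|\nabla v_n\|_{L^2(\Omega)}^2\le\|\nabla v_{\delta,\varepsilon}\|_{L^2(\Omega)}^2$; with weak lower semicontinuity this forces $\|\nabla v_n\|_{L^2(\Omega)}\to\|\nabla v_{\delta,\varepsilon}\|_{L^2(\Omega)}$, and norm convergence together with weak convergence in the Hilbert space $H^1(\Omega)$ yields $v_n\to v_{\delta,\varepsilon}$ strongly in $H^1(\Omega)$.

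The only point that is not a verbatim repetition of Proposition~\ref{mindelteps} is the non-convexity of $\mathcal{K}_\eta$: the weak limit of an $H^1$-bounded sequence in $\mathcal{K}_\eta$ need not a priori stay in $\mathcal{K}_\eta$, and this is precisely what Lemma~\ref{wclosed} takes care of. Given that lemma and Proposition~\ref{cont}, I do not anticipate any further obstacle; the rest is the routine l.s.c./compactness bookkeeping sketched above.
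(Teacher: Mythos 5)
Your proposal is correct and follows exactly the route the paper intends: the corollary is stated as an immediate consequence of Lemma \ref{wclosed}, which supplies the weak $H^1(\Omega)$-closedness of the non-convex set $\mathcal{K}_\eta$, after which the direct method and stability argument are verbatim those of Proposition \ref{mindelteps} (whose proof the paper delegates to \cite{BRV}), using Proposition \ref{cont} for the misfit term. Your additional detail on upgrading weak to strong $H^1$-convergence via norm convergence of the gradients is a correct filling-in of what the paper leaves implicit.
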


\subsection{Analysis of the $\Gamma-$limits}
\label{subsec:gamma}

We now investigate the asymptotic properties of the introduced functionals: in particular, we first concentrate on the limit of $J_{\delta,{\varepsilon}}$ as $\varepsilon \rightarrow 0$, in the sense of $\Gamma-$convergence. The proof of the next theorem will clarify our choice of the subset $\mathcal{K}_{\eta}$. 
For $v\in L^1(\Omega)$,  consider the following extensions of the cost functionals
\begin{equation}
 \tilde{J}_{\delta}(v) =
\left\{
	\begin{aligned}
	J_{\delta}(v) & \quad \textit{if $v \in X_{0,1}$}\\
	\infty & \quad \textit{otherwise in } L^1(\Omega)
	\end{aligned}
\right.
\label{minregbis1}
\end{equation}

and of (\ref{minrel})
\begin{equation}
 \tilde{J}_{\delta,\varepsilon}(v) =
\left\{
	\begin{aligned}
	J_{\delta,\varepsilon}(v) & \quad \textit{if $v \in \mathcal{K}_{\eta}$}\\
	\infty & \quad \textit{otherwise in }  L^1(\Omega)
	\end{aligned}
\right.
\label{minrel2}
\end{equation}
Then
\begin{theorem} \label{thm:Gconv_eps}
Consider a sequence $\{\varepsilon_k\}$ s.t. $\varepsilon_k \rightarrow 0$ as $k\rightarrow +\infty$. Then, the functionals $\tilde{J}_{\delta,\varepsilon_k}$ converge to $\tilde{J}_{\delta}$ in $L^1(\Omega)$ in the sense of the $\Gamma-$convergence.
\end{theorem}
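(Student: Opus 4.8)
The plan is to establish the two inequalities required by the definition of $\Gamma$-convergence — the $\liminf$ inequality (lower bound) and the existence of a recovery sequence (upper bound) — for the functionals $\tilde J_{\delta,\varepsilon_k}$ converging to $\tilde J_\delta$ in the $L^1(\Omega)$ topology. The heart of the matter is the classical Modica--Mortola (Ginzburg--Landau) result: with the normalization constant $\gamma$ chosen so that $\int_0^1 2\sqrt{W(s)}\,ds = 1$ where $W(s)=s^2(1-s)^2$, the functionals $v\mapsto \alpha\int_\Omega(\gamma\varepsilon|\nabla v|^2 + \frac{\gamma}{\varepsilon}W(v))$ $\Gamma$-converge in $L^1(\Omega)$ to $\alpha\,\mathrm{TV}(v)$ restricted to $BV(\Omega;\{0,1\})$. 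The misfit term $\frac12\int_\Sigma(u_\delta(v)-u_{meas})^2\,d\sigma$ is, by Proposition \ref{cont}, continuous with respect to $L^1(\Omega)$ convergence (on $\tilde X\supset\mathcal{K}$, and a fortiori on the relevant sequences), so it behaves as a harmless continuous perturbation that passes to the limit along any converging sequence. The genuinely new ingredient, and the reason for introducing $\mathcal{K}_\eta$, is reconciling the domains of definition: $\tilde J_\delta$ lives on $X_{0,1}$ (indicators of sets in $\mathcal{D}$) while $\tilde J_{\delta,\varepsilon_k}$ lives on $\mathcal{K}_\eta$.

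For the $\liminf$ inequality, I would take $v_k\to v$ in $L^1(\Omega)$ with $\liminf_k \tilde J_{\delta,\varepsilon_k}(v_k)<\infty$ (otherwise nothing to prove), so along a subsequence achieving the liminf we may assume $v_k\in\mathcal{K}_\eta$ and the energies are uniformly bounded. The standard Modica--Mortola lower bound argument (via the auxiliary function $\Phi(t)=\int_0^t 2\sqrt{W(s)}\,ds$ and the inequality $\gamma\varepsilon|\nabla v_k|^2 + \frac{\gamma}{\varepsilon}W(v_k)\ge 2\gamma\sqrt{W(v_k)}|\nabla v_k| = \gamma|\nabla(\Phi\circ v_k)|$, then lower semicontinuity of total variation) forces $v$ to take values in $\{0,1\}$ a.e. and gives $\alpha\,\mathrm{TV}(v)\le \liminf_k \alpha\int_\Omega(\cdots)$. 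The point that needs care: I must show the limit $v$ is actually the indicator of a set in $\mathcal{D}$, i.e. $v\in X_{0,1}$, so that $\tilde J_\delta(v)$ is finite. This is where Lemma \ref{wclosed} enters — the uniform energy bound yields (a subsequence) converging weakly in $H^1(\Omega)$; but wait, the energy bound only controls $\varepsilon_k\|\nabla v_k\|_{L^2}^2$, not $\|\nabla v_k\|_{L^2}^2$, so weak $H^1$ compactness is not directly available. Instead I would argue through the sets: writing $\Omega_{D_k}=\{v_k\ge\eta\}$ with $D_k\in\mathcal{D}$, Proposition \ref{compactD} gives a Hausdorff-convergent subsequence $D_k\to D_0\in\mathcal{D}$, hence $\chi_{\Omega_{D_k}}\to\chi_{\Omega_{D_0}}$ in $L^1$; combining with $v_k\to v$ in $L^1$ and $v\in\{0,1\}$ a.e. (and the sandwich $\eta\chi_{\Omega_{D_k}}\le v_k\le \eta+(1-\eta)\chi_{\Omega_{D_k}}$ exactly as in the proof of Lemma \ref{wclosed}) identifies $v=\chi_{\Omega_{D_0}}\in X_{0,1}$. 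Then continuity of the boundary misfit (Proposition \ref{cont}) finishes the lower bound: $\tilde J_\delta(v)\le\liminf_k\tilde J_{\delta,\varepsilon_k}(v_k)$.

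For the $\limsup$ / recovery sequence inequality, given $v\in X_{0,1}$, i.e. $v=\chi_{\Omega_D}$ with $D\in\mathcal{D}$, I need $v_k\in\mathcal{K}_\eta$ with $v_k\to v$ in $L^1$ and $\limsup_k\tilde J_{\delta,\varepsilon_k}(v_k)\le \tilde J_\delta(v)$. The classical Modica--Mortola construction builds $v_k$ by taking an optimal one-dimensional profile transition across $\partial D$ over a layer of width $O(\varepsilon_k|\log\varepsilon_k|)$ (or $O(\varepsilon_k)$ with a cutoff), producing a smooth-ish $H^1$ function interpolating between $0$ inside $D$ and $1$ outside, with $\int_\Omega(\gamma\varepsilon_k|\nabla v_k|^2 + \frac{\gamma}{\varepsilon_k}W(v_k))\to \mathrm{Per}(D)=\mathcal{H}^1(\partial D)$. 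Two adjustments are needed here compared with the textbook case: first, the constraint $v_k=1$ a.e. on $\Omega_{d_0}$ must be respected — this is automatic since $D\subset\subset\Omega$ with $\mathrm{dist}(D,\partial\Omega)\ge 2d_0$, so the transition layer, for $k$ large, stays well inside $\Omega\setminus\Omega_{d_0}$ and $v_k\equiv 1$ near $\partial\Omega$; second, and more delicately, I must ensure $v_k\in\mathcal{K}_\eta$, i.e. $\{v_k\ge\eta\}=\Omega_{D'}$ for \emph{some} $D'\in\mathcal{D}$ — here one arranges the profile so that the level set $\{v_k\ge\eta\}$ is exactly $\Omega_{D}$ itself (choose the transition profile to equal $0$ precisely on $D$ and to exceed $\eta$ exactly on the complement, by placing the ``$\eta$-crossing'' of the optimal profile at the original interface $\partial D$ or at an inner-parallel set $D_t=\{x: \mathrm{dist}(x,D)\le t\}$ type construction that stays in $\mathcal{D}$ for small $t$ by the uniform Lipschitz bounds). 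Since $D\in\mathcal{D}$ has Lipschitz boundary, these tubular-neighbourhood constructions are valid and the perimeter of the level sets converges to $\mathcal{H}^1(\partial D)$. Finally, $v_k\to v=\chi_{\Omega_D}$ in $L^1$ (the transition layer has vanishing measure), so by Proposition \ref{cont} the misfit terms converge, and adding them to the Modica--Mortola energy gives the desired $\limsup$ bound. The main obstacle is precisely this last compatibility requirement — keeping the recovery sequence and (on the lower-bound side) the limiting function inside the non-convex admissible sets $\mathcal{K}_\eta$ and $X_{0,1}$ respectively — which is handled by exploiting the uniform Lipschitz regularity encoded in $\mathcal{D}$ together with the compactness Proposition \ref{compactD} and the sandwich estimate from Lemma \ref{wclosed}, rather than by any new analytic estimate.
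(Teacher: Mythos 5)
Your proposal follows essentially the same route as the paper: the same splitting into the boundary-misfit term (handled as a continuous perturbation via Proposition \ref{cont}) plus the Modica--Mortola part, the same identification of the $\liminf$-limit as an element of $X_{0,1}$ through the level sets $\{v_k\geq\eta\}$, Proposition \ref{compactD} and the sandwich inequality from Lemma \ref{wclosed}, and the same key device for the recovery sequence, namely an optimal profile calibrated to cross the value $\eta$ exactly on $\partial D$ so that $\{v_k\geq\eta\}=\Omega_D$ and hence $v_k\in\mathcal{K}_\eta$.

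The one point you gloss over is the $\limsup$ inequality for a general Lipschitz $D\in\mathcal{D}$: you assert that the tubular-neighbourhood construction directly yields convergence of the energies to $\mathcal{H}^1(\partial D)$, which requires $\mathcal{H}^1(\{\rho=s\})\to\mathcal{H}^1(\partial D)$ as $s\to 0$ for the signed distance $\rho$; for merely Lipschitz boundaries this is not immediate. The paper sidesteps this by first reducing the $\limsup$ inequality to the dense subclass $\mathcal{L}$ of \emph{smooth} domains in $\mathcal{D}$ (via Remark 1.29 of \cite{B} together with the approximation theorem of \cite{Ver}, which gives smooth $D_k\in\mathcal{D}$ with Hausdorff convergence of the boundaries and convergence of the perimeters), and only then runs the profile construction. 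Your alternative suggestion of using inner parallel sets $D_t$ is also delicate, since such sets need not remain in $\mathcal{D}$ with the same constants. This is a repairable technicality rather than a structural flaw, but as written the recovery-sequence step is incomplete without the density reduction.
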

\begin{proof}
Write
$$
\tilde{J}_{\delta,\varepsilon}(v)=F_{\delta}(v)+G_{\varepsilon}(v)
$$
where 
\[F_{\delta}(v):= \frac{1}{2} \int_{\Sigma}(u_{\delta}(v) - u_{meas})^2d\sigma\] for any $v\in L^1(\Omega)$ and 
\[G_{\varepsilon}(v):= \int_{\Omega}\left( \gamma\varepsilon|\nabla v|^2 + \frac{\gamma}{\varepsilon}v^2(1-v)^2\right)\] 
for $v\in\mathcal{K}_{\eta}$ and $\infty$ otherwise in $L^1(\Omega)$.\\ 
Then from the continuity of $F_{\delta}(v)$  in $L^1(\Omega)$ derived in Proposition \ref{cont} it is enough to show  $\Gamma$-convergence of $G_{\varepsilon}$ as $\varepsilon \rightarrow 0$ to $\operatorname{TV}(v)$. Then by Remark 1.7 in \cite{dalM} it follows that
$
\tilde{J}_{\delta,\varepsilon}$ $\Gamma$-converges to $\tilde{J}_{\delta}$ as $\varepsilon \rightarrow 0$. \\
Let us prove $\Gamma$-convergence of $G_{\varepsilon}$ as $\varepsilon \rightarrow 0$.\\ 
\it{(i)} We first prove the $\liminf$ property i.e. for every sequence $\varepsilon_k\rightarrow 0$ and for every sequence $\{v_k\} \subset L^1(\Omega)$ s.t. $v_k \xrightarrow{L^1} v $, 
\[\operatorname{TV}(v) \leq \liminf_k {G}_{\varepsilon_k}(v_{k})\]. 

Consider a sequence $v_{k}$ converging in $L^1(\Omega)$ to a function $v\in L^1(\Omega)$ as $\varepsilon_k\rightarrow 0$ for $k\rightarrow \infty$.  
We can assume that $\{v_k\}\in \mathcal{K}_{\eta}$ since otherwise the claim would follow trivially. Moreover, by [MM] we know that  $v=\chi_{\Omega_D}$ for some $D\subset\Omega$ with finite perimeter
and $\operatorname{TV}(v)\le \liminf_k {G}_{\varepsilon_k}(v_{k})$.
\\
Finally, by reasoning as in the proof of Lemma \ref{wclosed} we obtain the following relation a.e.
\[
\eta\chi_{\Omega_{D_0}}\leq \chi_{\Omega_D}\leq \eta+(1-\eta)\chi_{\Omega_{D_0}},
\]
for some $D_0\in \mathcal D$, from which it follows that a.e.
\[
\chi_{\Omega_D}=\chi_{\Omega_{D_0}}\in X_{0,1}.
\]
\textit{(ii)} Let us now prove the $\limsup$ property: for any $v\in L^1(\Omega)$ there exists a sequence $\{v_k\}$ converging to $v$ in $L^1(\Omega)$ as $k\rightarrow +\infty$ such that 
\[\limsup_k G_{\varepsilon_k}(v_{k})\leq \operatorname{TV}(v).\] 
Let us first observe that the above inequality can be checked on a suitable dense set of $X_{0,1}$, see for example Remark 1.29 of \cite{B}. Therefore, we may only consider the set
\[\mathcal{L}=\{ \chi_{\Omega_D}, D\subset\Omega,\,\,D\in \mathcal{D},\,\, \partial D\in C^{\infty}\}.\]
Actually, by Theorem 1.12 in \cite{Ver} (see in particular properties (ii) and (iii)) it follows that for any set $D\in\mathcal{D}$ there exists a sequence of smooth domains $D_k\in\mathcal{D}$ (i.e. $\forall k$, $\partial D_k$ is $\mathcal{C}^{\infty}$ and satisfies assumption $3$ with constants $r_0$, $L_0$) such that 

\[
{\partial{D_k}}\rightarrow {\partial D}\,\,\textrm{ in\,\,the\,\,Hausdorff\,\,metric }\textrm{ as }k \rightarrow +\infty
\]

In particular, this implies
\begin{enumerate}
\item[(1)] 
\[
\chi_{\Omega_{D_k}}\rightarrow \chi_{\Omega_D}\,\,\textrm{ in }L^1(\Omega)\textrm{ as }k \rightarrow +\infty
\]
\item[(2)] 
\[\mathcal{H}^1(\partial D_k)\rightarrow \mathcal{H}^1(\partial D),\,\,\textrm{as }k \rightarrow +\infty\]
\end{enumerate}

In order to check the $\limsup$  property on $ \mathcal{L}$, we follow the standard approach by constructing a suitable recovery sequence. Hence, let us consider the Cauchy problem
\[
\begin{cases}
g'=g(1-g)\,\textrm{in }\mathbb{R}\\
g(0)=\eta\in (0,1)
\end{cases}
\]
Note that the solution is globally defined, $0<g<1$ and $g$  has limits $0$ and $1$ for $t\to -\infty$ and $t\to +\infty$ respectively. Now, for any $\beta>0$ we take $M_{\beta}>0$ such that $g(M_{\beta})\le 1-\beta$ and $g(M_{\beta})\le 0$ and consider the function

\[
g_{\beta}(t):=\begin{cases}
1,\,\,t\in (M_{\beta}+1,\infty)\\
(1-g(M_{\beta}))(t-M_{\beta})+g(M_{\beta}),\,\,t\in [M_{\beta},M_{\beta}+1]\\
g(t),\,\,t\in [-M_{\beta},M_{\beta}]\\
g(-M_{\beta})(t+M_{\beta}+1),\,\,t\in [-M_{\beta}-1,-M_{\beta}]\\
0,\,\,t\in (-\infty,-M_{\beta}-1)
\end{cases}
\]
Let us now fix $v\in \mathcal{L}$ i.e. $v:=\chi_{\Omega_D}$ with $D$ smooth and define the signed distance from $\partial D$
\[
\rho(x):=
\begin{cases}
-\inf_{y\in\partial D}d(x,y),\,\, x\in D\\
\inf_{y\in\partial D}d(x,y),\,\, x\in \Omega_D.
\end{cases}
\]
Then we can define 
\[
v_{\varepsilon,\beta}(x):=
\begin{cases}
1,\,\, \textrm{in }\{x\in\Omega:\rho(x)\geq (M_{\beta}+1)\varepsilon\}\\
g_{\beta}\left(\frac{\rho(x)}{\varepsilon}\right),\,\,\textrm{in }\{x\in\Omega:|\rho(x)|\leq (M_{\beta}+1)\varepsilon\}\\
0,\,\,\textrm{in }\{x\in\Omega:\rho(x)\leq -(M_{\beta}+1)\varepsilon\}.
\end{cases}
\]
A crucial observation is that for every positive and small enough $\varepsilon, \beta$, the function $v_{\varepsilon,\beta}\in \mathcal{K}_{\eta}$.

In fact, by definition we have $v_{\varepsilon,\beta}\in H^1(\Omega;[0,1])$ and
$v_{\varepsilon,\beta}(x)=g_{\beta}(0)=g(0)=\eta$ for $x\in \partial D$. Moreover, since $g_{\beta}$
is a strictly increasing function of the signed distance from $\partial D$ (and by recalling that $|\partial D|=0$) we readily get
$\{v_{\varepsilon,\beta}\geq \eta \text{ a.e.}\}=\Omega_{D}$, with $D\in \mathcal{D}$, so that the claim follows. 

Now, by standard arguments, see for example \cite{M} and \cite{MM}, we can find a sequence $\{v_{\varepsilon_k,\beta_k}\}_{k=1}^{\infty}$ (with $\beta_k\rightarrow 0,\,\, \varepsilon_k\rightarrow 0$) converging in $L^1(\Omega)$ to $v$ and satisfying the $\limsup$ property.  

\end{proof}
As a consequence, from the equicoerciveness of the functionals $G_{\varepsilon}$  and by the $\Gamma$-convergence, see for example Theorem 7.4 in \cite{dalM}, we derive the following convergence result for the solutions of Problem (\ref{minrel2}).
\begin{corollary}
\label{convmin}
Assume $\delta>0$, $\varepsilon>0$, and let $v_{\delta,\varepsilon}$ be a minimum of the functional (\ref{minrel2}).  Then there exists a sequence $\varepsilon_k\rightarrow 0$ as $k\rightarrow +\infty$ and a function $v_{\delta}\in X_{0,1}$ such that $v_{\delta, \varepsilon_k}\rightarrow v_{\delta}$ in $L^1(\Omega)$ and   $v_{\delta}$ is a minimizer of (\ref {minregbis1}).
\end{corollary}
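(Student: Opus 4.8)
The plan is to invoke the fundamental theorem of $\Gamma$-convergence, in the form stated as Theorem 7.4 in \cite{dalM} (or equivalently its corollary on convergence of minimizers), using the $\Gamma$-convergence of $\tilde J_{\delta,\varepsilon_k}$ to $\tilde J_\delta$ already established in Theorem \ref{thm:Gconv_eps}. The missing ingredient for that theorem is the \emph{equicoerciveness} of the family $\{\tilde J_{\delta,\varepsilon_k}\}$ in $L^1(\Omega)$, so the bulk of the proof should be devoted to verifying this. First I would fix a sequence $\varepsilon_k \to 0$ and let $v_k := v_{\delta,\varepsilon_k}$ be a minimizer of $\tilde J_{\delta,\varepsilon_k}$ over $L^1(\Omega)$; since the admissible set $\mathcal{K}_\eta$ is nonempty (it contains any $\chi_{\Omega_D}$ for $D \in \mathcal{D}$ with smooth boundary, for which the Modica--Mortola energy is finite for small $\varepsilon$), we may compare $\tilde J_{\delta,\varepsilon_k}(v_k)$ with the value at such a fixed competitor, obtaining a uniform bound $\tilde J_{\delta,\varepsilon_k}(v_k) \le C$ independent of $k$.

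From this uniform bound, since the fidelity term $F_\delta \ge 0$, I get a uniform bound on the Modica--Mortola energy $G_{\varepsilon_k}(v_k) \le C/\alpha$. The standard Modica--Mortola compactness argument (see \cite{M}, \cite{MM}) then yields that $\{v_k\}$ is relatively compact in $L^1(\Omega)$: indeed, one writes $\gamma\varepsilon_k|\nabla v_k|^2 + (\gamma/\varepsilon_k)W(v_k) \ge 2\gamma\sqrt{W(v_k)}|\nabla v_k| = |\nabla(\Phi\circ v_k)|$ with $W(s)=s^2(1-s)^2$ and $\Phi$ a primitive of $2\gamma\sqrt{W}$, so $\{\Phi\circ v_k\}$ is bounded in $BV(\Omega)$ and hence precompact in $L^1$; composing with the (continuous, bounded) inverse of $\Phi$ on $[0,1]$, and using that $0 \le v_k \le 1$ gives an $L^1$-bound by $|\Omega|$, one extracts a subsequence with $v_k \to v_\delta$ in $L^1(\Omega)$. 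Equicoerciveness in the sense required by \cite{dalM} follows. Applying the fundamental theorem of $\Gamma$-convergence, $v_\delta$ is a minimizer of the $\Gamma$-limit $\tilde J_\delta$, and in particular $\tilde J_\delta(v_\delta) < \infty$, which by definition \eqref{minregbis1} forces $v_\delta \in X_{0,1}$; this is exactly the assertion of the corollary.

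The main obstacle — and the reason the subset $\mathcal{K}_\eta$ was introduced in the first place — is making sure the $L^1$-limit $v_\delta$ actually lands in $X_{0,1}$ rather than merely in $BV(\Omega)$: a priori the Modica--Mortola compactness only gives $v_\delta = \chi_{\Omega_D}$ for some set $D$ of finite perimeter, with no regularity of $\partial D$. This is precisely handled by the $\liminf$ part of Theorem \ref{thm:Gconv_eps}, where the constraint $\{v_k \ge \eta\} = \Omega_{D_k}$ with $D_k \in \mathcal{D}$, together with the Hausdorff-compactness of $\mathcal{D}$ (Proposition \ref{compactD}) and the two-sided pointwise bound $\eta\chi_{\Omega_{D_k}} \le v_k \le \eta + (1-\eta)\chi_{\Omega_{D_k}}$, is shown to imply that the limiting set $D_0$ lies in $\mathcal{D}$ and $v_\delta = \chi_{\Omega_{D_0}} \in X_{0,1}$. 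So in the proof of the corollary I would only need to cite Theorem \ref{thm:Gconv_eps} for the $\Gamma$-convergence and the membership $v_\delta\in X_{0,1}$, cite \cite{dalM} (Theorem 7.4) for the convergence-of-minimizers principle, and supply the short equicoerciveness argument sketched above; no genuinely new difficulty remains beyond organizing these ingredients.
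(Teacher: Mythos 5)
Your proposal follows essentially the same route as the paper, which states the corollary as an immediate consequence of Theorem \ref{thm:Gconv_eps}, the equicoerciveness of the functionals $G_{\varepsilon}$, and the fundamental theorem of $\Gamma$-convergence (Theorem 7.4 in \cite{dalM}); your added detail on the Modica--Mortola compactness and on why the limit lands in $X_{0,1}$ is exactly what the paper leaves implicit. One slip in your equicoerciveness step: the competitor $\chi_{\Omega_D}$ does \emph{not} belong to $\mathcal{K}_\eta$, since an indicator function of a set with nontrivial boundary is not in $H^1(\Omega)$, so $\tilde J_{\delta,\varepsilon_k}(\chi_{\Omega_D})=+\infty$ and the comparison gives nothing. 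The correct fixed-energy competitors are the recovery-sequence functions $v_{\varepsilon_k,\beta_k}\in\mathcal{K}_\eta$ constructed in the $\limsup$ part of Theorem \ref{thm:Gconv_eps}, whose Ginzburg--Landau energies converge to $\operatorname{TV}(\chi_{\Omega_D})$ and are therefore uniformly bounded; with that substitution your argument goes through unchanged.
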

\vskip 3truemm

\begin{remark} \label{rem:potential}
It can be proved that all the previous results, starting from Proposition \ref{mindelteps}, also hold by replacing in the relaxed functional $G_{\epsilon}(v)$ the potential $v^2(1-v)^2$ with any positive function vanishing only for $v=0$ and $v=1$.
\end{remark}

We are now left with proving the $\Gamma$-convergence of $J_{\delta}$ to $J$  as defined in \eqref{minv}.
 To this aim we need to prove some preliminary results. The first concerning properties of the set $X_{0,1}$ and the second regarding continuity properties of solutions to \ref{eq:probclassic} as $\delta\rightarrow 0$ that we derive adapting the proof of Theorem 4.2 in \cite{R}.
 
 \begin{lemma}\label{closeX}
 The set $X_{0,1}$ is closed in the $L^1(\Omega)$ topology.
 \end{lemma}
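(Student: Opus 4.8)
The plan is to show that $X_{0,1}$ is closed in $L^1(\Omega)$ by taking a sequence $\{v_k\}\subset X_{0,1}$ with $v_k\to v$ in $L^1(\Omega)$ and showing $v\in X_{0,1}$. By definition, each $v_k=\chi_{\Omega_{D_k}}$ for some $D_k\in\mathcal D$. Since $L^1$-convergence of indicator functions is precisely convergence of the sets $D_k$ in measure (i.e. $|D_k\triangle D|\to 0$ for some measurable set $D$, with $v=\chi_{\Omega_D}$ up to null sets), the task reduces to showing that the limit set $D$ is (equivalent a.e. to) a set in $\mathcal D$.

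First I would invoke Proposition \ref{compactD}: since $\{D_k\}\subset\mathcal D$ and $\mathcal D$ is compact in the Hausdorff topology, there is a subsequence $D_{k_j}$ converging in the Hausdorff metric to some $\widetilde D\in\mathcal D$. By the argument already used in the proof of Proposition \ref{compactD} (and in Remark \ref{convinmisur}), Hausdorff convergence of sets with uniformly Lipschitz boundary implies convergence in measure, so $\chi_{\Omega_{D_{k_j}}}\to\chi_{\Omega_{\widetilde D}}$ in $L^1(\Omega)$. On the other hand the full sequence $v_k\to v$ in $L^1$, hence the subsequence converges to $v$ as well, and by uniqueness of the $L^1$ limit $v=\chi_{\Omega_{\widetilde D}}$ a.e. Thus $v\in X_{0,1}$, which proves closedness. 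The condition $v\equiv 1$ a.e. on $\Omega_{d_0}$ is preserved automatically: each $v_k$ satisfies it, so does the $L^1$-limit $v$ (a.e. pointwise convergence along a further subsequence transfers the constraint).

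The only subtle point — and the main thing to be careful about — is the passage from Hausdorff convergence to $L^1$-convergence of indicator functions: this uses crucially the \emph{uniform} Lipschitz bound (constants $r_0,L_0$ fixed throughout $\mathcal D$), which prevents the boundaries $\partial D_k$ from developing positive-measure ``thickening'' in the limit and guarantees $|\partial\widetilde D|=0$; this is exactly what Theorems 2.4.7–2.4.10 of \cite{HP} provide and what was already exploited in Proposition \ref{compactD}. Apart from that, the argument is a routine ``compactness plus uniqueness of limit'' device, so no genuine obstacle is expected; one simply has to state clearly that $L^1$ convergence of the $v_k$ and measure convergence of the $D_k$ are the same thing, and that a sequence all of whose subsequences have a sub-subsequence converging (in measure) to a fixed limit must itself converge to that limit.
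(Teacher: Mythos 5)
Your proof is correct and follows essentially the same route as the paper's: extract a Hausdorff-convergent subsequence via Proposition \ref{compactD}, use that Hausdorff convergence of these uniformly Lipschitz sets implies convergence in measure, and conclude by uniqueness of the $L^1$ limit that $v=\chi_{\Omega_{\widetilde D}}$ with $\widetilde D\in\mathcal{D}$. Your version is in fact slightly more explicit than the paper's about why Hausdorff convergence upgrades to $L^1$ convergence of indicators and about the preservation of the constraint on $\Omega_{d_0}$, but there is no substantive difference in approach.
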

 \begin{proof}
 Consider a sequence $\{v_n\}\in X_{0,1}$ and assume that $v_n\rightarrow v$ in $L^1(\Omega)$. Then, possibly up to subsequences, $v_n=\chi_{\Omega_{D_n}}\rightarrow v$ pointwise a.e. in $\Omega$ where $D_n\in\mathcal{D}$. Hence, it follows that $v=\chi_{\Omega_D}$ for some measurable set $D$. Also, by Proposition \ref{compactD} it follows that, possibly up to subsequences, $D_n$ converges in the Hausdorff topology to $D_0\in \mathcal{D}$. Obviously, this also implies that $\chi_{D_n}\rightarrow \chi_{D_0}$ in $L^1(\Omega)$. Hence,  up to a set of measure zero $D=D_0$ implying that $v\in X_{0,1}$.
 
 \end{proof}
 
\begin{proposition}\label{convudeltan}
Under Assumptions \ref{as:1} - \ref{as:4} in  Section 2,  let $\{v_{\delta_n}\}_{n\geq 1}$ be a sequence of elements in $X_{0,1}$ converging in $L^1(\Omega)$ as $\delta_n\rightarrow 0$ to $v$.
Then $v={\chi}_{\Omega_{D}}$ a.e. with $D\in\mathcal{D}$ and the traces on $\Sigma$ of the corresponding solutions to (\ref{eq:probclassic}), $u_{\delta_n}(v_{\delta_n})|_\Sigma$, converge strongly in $L^2(\Sigma)$ to $\tilde{u}|_\Sigma$, where $\tilde{u}$ is the solution to problem \eqref{probcav} with cavity $D$.
\end{proposition}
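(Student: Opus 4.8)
The overall strategy is to show that the family $\{u_{\delta_n}(v_{\delta_n})\}$ is bounded in $H^1(\Omega)$, extract a weakly convergent subsequence, identify the weak limit with $\tilde u$ (suitably extended), and finally upgrade the convergence of the traces on $\Sigma$ from weak to strong. The first step is to identify the limit set: since $\{v_{\delta_n}\}\subset X_{0,1}$ and $v_{\delta_n}\to v$ in $L^1(\Omega)$, Lemma \ref{closeX} gives $v=\chi_{\Omega_D}$ for some $D\in\mathcal{D}$; moreover, arguing as in Lemma \ref{closeX} and Proposition \ref{compactD}, the associated cavities $D_n$ converge (up to a subsequence) to $D$ in the Hausdorff metric.

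Next I would derive uniform estimates. From \eqref{estimateudelta} and \eqref{estimategradudelta} we have $\|u_{\delta_n}\|_{L^\infty(\Omega)}\le C$ and $\int_\Omega a_{\delta_n}(v_{\delta_n})|\nabla u_{\delta_n}|^2\le C$ with $C$ independent of $n$. The key point here — and I expect this to be the main obstacle — is that the coefficient $a_{\delta_n}(v_{\delta_n})$ degenerates to $0$ on the (shrinking) filled cavity, so the energy bound does \emph{not} immediately give an $H^1(\Omega)$-bound on $u_{\delta_n}$; one only controls $\nabla u_{\delta_n}$ on sets where $v_{\delta_n}=1$. To get around this, I would fix an intermediate domain $\Omega_\varrho$ as in Section \ref{sec:direct} (with $\Omega_{d_0}\subset\Omega_\varrho\subset\subset\Omega_D$ and $\Omega_\varrho\subset\subset\Omega_{D_n}$ for $n$ large, using the Hausdorff convergence of $D_n$). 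On $\Omega_\varrho$ one has $v_{\delta_n}=1$ hence $a_{\delta_n}(v_{\delta_n})=1$, so $\|\nabla u_{\delta_n}\|_{L^2(\Omega_\varrho)}\le C$, and together with the $L^\infty$-bound (and the Poincaré-type inequality of Theorem A.1 in \cite{BCMP}, exploiting that $f\not\equiv 0$ on $\Omega_{d_0}$ so $u_{\delta_n}$ is not driven to zero there) one gets $\|u_{\delta_n}\|_{H^1(\Omega_\varrho)}\le C$. Passing to a subsequence, $u_{\delta_n}\weak u^*$ weakly in $H^1(\Omega_\varrho)$ and, by Rellich, strongly in $L^2(\Omega_\varrho)$ and a.e.

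Then I would identify $u^*$ with $\tilde u|_{\Omega_\varrho}$. Taking test functions $\phi\in H^1(\Omega)$ supported in $\Omega_\varrho$ in the weak formulation of \eqref{eq:probclassic}, the term $\int_\Omega a_{\delta_n}(v_{\delta_n})\nabla u_{\delta_n}\cdot\nabla\phi$ reduces to $\int_{\Omega_\varrho}\nabla u_{\delta_n}\cdot\nabla\phi$ which passes to the limit by weak convergence; the lower-order term $\int v_{\delta_n}u_{\delta_n}^3\phi$ passes to the limit by a.e. convergence and dominated convergence; and the right-hand side is $\int f\phi$, unchanged. Hence $u^*$ satisfies the weak form of \eqref{probcav} on $\Omega_\varrho$. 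Since $\varrho>0$ is arbitrary (and the intermediate domains exhaust $\Omega_D$ as $\varrho\to 0$), a diagonal argument over a sequence $\varrho_m\to 0$ produces a function on $\Omega_D$ solving \eqref{weakfucav1}; by uniqueness (Theorem \ref{exist}) this function is $\tilde u$. A standard subsequence-uniqueness argument then removes the passage to subsequences for the limit on any fixed $\Omega_\varrho$.

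Finally, I would prove the strong $L^2(\Sigma)$ convergence of the traces. Fix $\epsilon>0$; choosing $\varrho$ small and $n$ large, the previous steps give weak convergence $u_{\delta_n}\weak\tilde u$ in $H^1(\Omega_\varrho)$, and since $\Sigma\subset\partial\Omega$ lies in the fixed piece $\Omega_{d_0}\subset\Omega_\varrho$ where the coefficient is non-degenerate, I would like strong convergence there. To obtain it, I would test the equation for the difference $w_n=u_{\delta_n}-\tilde u$ (extended appropriately, as in Proposition \ref{cont}) with a cutoff of $w_n$ supported in $\Omega_\varrho$: using the uniform $H^1(\Omega_\varrho)$ bound, the a.e. convergence, the $L^\infty$ bounds on the cubic terms, and the smallness of $|\Omega_D\setminus\Omega_\varrho|$ and $|\Omega_{D_n}\setminus\Omega_\varrho|$ (controlled as in \eqref{omenodn}), one shows $\|\nabla w_n\|_{L^2(\Omega_\varrho)}\to 0$ and $\|w_n\|_{L^2(\Omega_{d_0})}\to 0$, hence $\|w_n\|_{H^1(\Omega_\varrho)}\to 0$. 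The trace theorem then yields $\|u_{\delta_n}-\tilde u\|_{L^2(\Sigma)}\le C\|w_n\|_{H^1(\Omega_\varrho)}\to 0$. The argument is structurally parallel to the proof of Theorem \ref{contdom}, with the Hausdorff convergence of $D_n$ to $D$ replaced (and supplemented) by the vanishing of $\delta_n$ as the mechanism that kills the contributions coming from the filled cavity.
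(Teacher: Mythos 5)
Your overall architecture (uniform bounds, localization to sets compactly contained in $\Omega_D$ where $a_{\delta_n}\equiv 1$, weak $H^1$ compactness there, identification of the limit, then a cutoff/Caccioppoli argument near $\partial\Omega$ for the strong trace convergence) matches the paper's Steps 1, 3 and 5, and your Step 5 is essentially the paper's. However, there is a genuine gap in your identification step. You pass to the limit in the weak formulation of \eqref{eq:probclassic} only against test functions $\phi$ \emph{supported in} $\Omega_\varrho$, i.e.\ vanishing in a neighbourhood of $\partial D$. This yields the equation $-\Delta u^*+(u^*)^3=f$ in the interior of $\Omega_D$ and the Neumann condition on $\partial\Omega$, but it says nothing about the boundary condition of $u^*$ on $\partial D$: the weak formulation \eqref{weakfucav1} must hold for \emph{all} $\phi\in H^1(\Omega_D)$, and it is precisely the test functions that do not vanish near $\partial D$ that encode $\partial u/\partial\mathbf{n}=0$ on $\partial D$. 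Without that, the appeal to uniqueness in Theorem \ref{exist} is not available (already in the linear annulus model, infinitely many functions solve the interior equation with Neumann data only on the outer boundary), so you cannot conclude $u^*=\tilde u$, and the proposition is not proved.

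The paper closes exactly this gap by working globally on $\Omega$: it extracts weak $L^2(\Omega)$ limits $\tilde u$ and $V$ of $\sqrt{a_{\delta_n}}\,u_{\delta_n}$ and $\sqrt{a_{\delta_n}}\,\nabla u_{\delta_n}$, proves via the Caccioppoli-type inequality \eqref{eq:caccioppoli2} (combined with dominated convergence, since $a_{\delta_n}\to 0$ a.e.\ in $D$ and $u_{\delta_n}$ is uniformly bounded) that $\tilde u=0$ and $V=0$ a.e.\ in $D$, identifies $V=\nabla\tilde u$ on $\Omega_D$, and only then passes to the limit in the weak formulation tested against an arbitrary $\varphi\in H^1(\Omega_D)$ extended to $H^1(\Omega)$: the contributions from inside the cavity and from the thin transition layer $D^a\setminus D$ are shown to vanish (Cauchy--Schwarz with the uniform energy bound against the absolute continuity of $\int|\nabla\varphi|^2$), so the limit identity holds for all admissible test functions and the Neumann condition on $\partial D$ survives. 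To repair your argument you would need to add this global passage to the limit with non-vanishing test functions near $\partial D$, together with some mechanism (the Caccioppoli inequality, or a direct estimate of $\delta_n\int_{D_n}\nabla u_{\delta_n}\cdot\nabla\varphi$ plus the smallness of $\int_{D^a\setminus D_n}|\nabla\varphi|^2$) to kill the cavity contributions.
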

Since the proof of Proposition \ref{convudeltan} is long and rather technical and instrumental to get the $\Gamma-$ convergence result, we have preferred to put it in the Appendix. 
 
 The previous proposition can be used to prove the $\Gamma$-convergence, as $\delta_n \rightarrow 0$, of the functionals $\tilde{J}_{\delta}$ defined in \eqref{minregbis1} to the limit functional
  \begin{equation}
 \tilde{J}(v) =
\left\{
	\begin{aligned}
	J(v) & \quad \textit{if $v \in X_{0,1}$}\\
	\infty & \quad \textit{otherwise in } L^1(\Omega),
	\end{aligned}
\right. \qquad J(v) = \frac{1}{2} \int_{\Sigma}(u(v) - u_{meas})^2d\sigma+\alpha \textrm{TV}(v), 
\label{minregbis}
\end{equation}
where $u(v)$ is the solution of \eqref{probcav} with cavity $D$ such that $v = \chi_{\Omega_D}$.

 \begin{theorem}
 \label{thm:Gconv_del}
Consider a sequence $\{\delta_n
\}$ s.t. $\delta_n \rightarrow 0$. Then, the functionals $\tilde{J}_{\delta_n}$ converge to $\tilde{J}$ in $L^1(\Omega)$ in the sense of the $\Gamma$-convergence.
\end{theorem}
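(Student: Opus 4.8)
The plan is to verify the two defining conditions of $\Gamma$-convergence for $\tilde J_{\delta_n}$ towards $\tilde J$ in the $L^1(\Omega)$ topology, exploiting the decomposition $\tilde J_{\delta_n}(v) = F_{\delta_n}(v) + \alpha\,\mathrm{TV}(v)$ when $v\in X_{0,1}$, together with the fact that the penalization term $\alpha\,\mathrm{TV}(v)$ is \emph{the same} for every $\delta_n$. Since $\alpha\,\mathrm{TV}(\cdot)$ (extended by $+\infty$ outside $X_{0,1}$) is already $L^1$-lower semicontinuous — this is precisely the content recorded after Proposition \ref{mincav}, combined with Lemma \ref{closeX} — the whole burden falls on the data-misfit term $F_{\delta_n}(v) = \tfrac12\int_\Sigma (u_{\delta_n}(v)-u_{meas})^2\,d\sigma$, and on showing it behaves continuously along the relevant sequences.

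First I would prove the \textbf{liminf inequality}: given $v_n\to v$ in $L^1(\Omega)$ with $\liminf_n \tilde J_{\delta_n}(v_n) < \infty$, I may pass to a subsequence realizing the liminf along which $\tilde J_{\delta_n}(v_n)$ is bounded; hence $v_n\in X_{0,1}$ for all $n$ and $\sup_n \mathrm{TV}(v_n)<\infty$. By Lemma \ref{closeX} (closedness of $X_{0,1}$) the limit satisfies $v = \chi_{\Omega_D}$ with $D\in\mathcal D$, so $\tilde J(v)=J(v)$ is finite. Writing $v_n = \chi_{\Omega_{D_n}}$, Proposition \ref{compactD} gives (up to a further subsequence) Hausdorff convergence $D_n\to D$, and Proposition \ref{convudeltan} then yields $u_{\delta_n}(v_n)|_\Sigma \to u(v)|_\Sigma$ strongly in $L^2(\Sigma)$, so $F_{\delta_n}(v_n)\to F(v):=\tfrac12\int_\Sigma(u(v)-u_{meas})^2\,d\sigma$. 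Combining this with lower semicontinuity of the total variation, $\alpha\,\mathrm{TV}(v)\le\liminf_n \alpha\,\mathrm{TV}(v_n)$, gives $\tilde J(v)\le\liminf_n \tilde J_{\delta_n}(v_n)$, as required.

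Next the \textbf{limsup inequality (recovery sequence)}: given $v\in L^1(\Omega)$, if $v\notin X_{0,1}$ then $\tilde J(v)=\infty$ and the constant sequence works trivially, so assume $v=\chi_{\Omega_D}\in X_{0,1}$, $D\in\mathcal D$. Here the natural choice is the \emph{constant} recovery sequence $v_n\equiv v$: then $v_n\to v$ in $L^1(\Omega)$, the penalty term equals $\alpha\,\mathrm{TV}(v)$ for every $n$, and it remains only to show $F_{\delta_n}(v)\to F(v)$, i.e. $u_{\delta_n}(v)|_\Sigma \to u(v)|_\Sigma$ in $L^2(\Sigma)$ — but this is exactly Proposition \ref{convudeltan} applied to the constant sequence $v_{\delta_n}\equiv v$ (whose $L^1$-limit is $v$, with the associated cavity $D$). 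Hence $\limsup_n \tilde J_{\delta_n}(v_n) = \tilde J(v)$, which is even stronger than the required inequality.

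The \textbf{main obstacle} is entirely concentrated in Proposition \ref{convudeltan}: the strong $L^2(\Sigma)$-convergence of the traces $u_{\delta_n}(v_{\delta_n})|_\Sigma$ to the trace of the solution of the genuine cavity problem \eqref{probcav}. This is the step where the degeneration of the coefficient $a_{\delta_n}(v)=\delta_n + (1-\delta_n)v$ inside the cavity must be controlled — one expects to use the uniform bounds \eqref{estimateudelta}–\eqref{estimategradudelta}, a weak-limit/compactness argument on $\Omega\setminus D$, an identification of the limit as the solution of the Neumann problem on $\omd$ (using that $\mathrm{supp}\,f\subset\Omega_{d_0}$ is away from the cavities and the uniform Lipschitz regularity of $\mathcal D$, much as in Theorem \ref{contdom}), and finally a uniform trace estimate near $\Sigma$; this is why, as the authors note, its proof is deferred to the Appendix. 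Once Proposition \ref{convudeltan} is granted, both halves of the $\Gamma$-convergence statement follow as sketched above with only routine arguments.
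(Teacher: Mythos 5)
Your proposal is correct and follows essentially the same route as the paper: the liminf inequality via boundedness of $\tilde J_{\delta_n}(v_{\delta_n})$, closedness of $X_{0,1}$ (Lemma \ref{closeX}), lower semicontinuity of the total variation and Proposition \ref{convudeltan}; the limsup inequality via the constant recovery sequence and again Proposition \ref{convudeltan}. The only cosmetic difference is your explicit appeal to Proposition \ref{compactD} for Hausdorff convergence, which the paper absorbs into Lemma \ref{closeX} since Proposition \ref{convudeltan} only needs $L^1$ convergence.
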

\begin{proof}
\it{(i)} We first prove the $\liminf$ property i.e. for every sequence $\delta_n\rightarrow 0$ and for every sequence $\{ v_{\delta_n} \} \subset L^1(\Omega)$ s.t. $v_{\delta_n} \xrightarrow{L^1} v $, $\tilde{J}(v) \leq \liminf_n \tilde{J}_{\delta_n}(v_{\delta_n})$. Consider a sequence $v_{\delta_n}$ converging in $L^1(\Omega)$ to a function $v\in L^1(\Omega)$ as $\delta_n\rightarrow 0$ for $n\rightarrow \infty$.  Then we can assume that
\begin{equation}\label{uniformbound}
J_{\delta_n}(v_{\delta_n})\leq C.
\end{equation}
In fact, if  $\liminf_n \tilde{J}_{\delta_n}(v_{\delta_n})=+\infty$ then the $\liminf$ property trivially follows. Hence, possibly up to a subsequence, $ \liminf_n \tilde{J}_{\delta_n}(v_{\delta_n})=\lim_n \tilde{J}_{\delta_n}(v_{\delta_n})<+\infty$ which implies \ref{uniformbound}. Then $v_{\delta_n}\in X_{0,1}$ and
$$
\operatorname{TV}(v_{\delta_n})\leq C
$$
and by the lower semicontinuity of the total variation with respect to the $L^1$ convergence we have that
\begin{equation}\label{per1}
\operatorname{TV}(v)\leq \liminf_{n\rightarrow +\infty}\operatorname{TV}(v_{\delta_n})\leq C
\end{equation}
Also,  possibly up to subsequences, since $v_{\delta_n}=\chi_{\Omega_{D_n}}\rightarrow v$ a.e. in $\Omega$, it follows that $v=\chi_{\Omega_D}$ and $v=1\textrm{ in }\Omega_{d_0}$.  Furthermore, by Lemma \ref{closeX} we have that $v\in X_{0,1}$. 
Finally, since $v_{\delta_n} \in X_{0,1}$, 
we can use Proposition \ref{convudeltan} to conclude that

\begin{equation}\label{contfunct1}
\int_{\Sigma}|u_{\delta_n}(v_{\delta_n})-u_{meas}|^2\rightarrow \int_{\Sigma}|u(v)-u_{meas}|^2
\end{equation}
as $n\rightarrow +\infty$.
Hence, using (\ref{per1}) and (\ref{contfunct1}) we have
\begin{equation}\label{liminf}
J(v)\leq \liminf_{n\rightarrow +\infty} J_{\delta_n}(v_{\delta_n}).
\end{equation}
\textit{(ii)} Let us now prove the following property equivalent to the $\limsup$ property: for any $v\in L^1(\Omega)$ there exists a sequence $\{v_{\delta_n}\}$ converging to $v$ in $L^1(\Omega)$ such that $\limsup_{n\rightarrow \infty}\tilde{J}_{\delta_n}(v_{\delta_n})\leq \tilde{J}(v)$.

 Let $v\in L^1(\Omega)$. Then if $\tilde{J}(v)=+\infty$ then the property trivially follows. So, we can assume that $v\in X_{0,1}$. Consider now the following sequence  $\{v_{\delta_n}\}_{n\geq 0}=\{v\}_{n\geq 0}$.
Then
$$
\limsup_{n\rightarrow +\infty}  J_{\delta_n}(v_{\delta_n})=\limsup_{n\rightarrow +\infty}\int_{\Sigma}|u_{\delta_n}(v)-u_{meas}|^2+\alpha \operatorname{TV}(v)
$$
and by Proposition  \ref{convudeltan}, possibly up to subsequences, it follows that
$$
\limsup_{n\rightarrow +\infty}\int_{\Sigma}|u_{\delta_n}(v)-u_{meas}|^2=\lim_{n\rightarrow +\infty}\int_{\Sigma}|u_{\delta_n}(v)-u_{meas}|^2=\int_{\Sigma}|u(v)-u_{meas}|^2
$$
where $u(v)$ is the solution of (\ref{probcav}) corresponding to $v=\chi_{\Omega_D}$ and hence we finally obtain that
$$
\limsup_{n\rightarrow +\infty}  J_{\delta_n}(v_{\delta_n})=\lim_{n\rightarrow +\infty}\int_{\Sigma}|u_{\delta_n}(v)-u_{meas}|^2+\alpha \operatorname{TV}(v)=\int_{\Sigma}|u(v)-u_{meas}|^2+\alpha \operatorname{TV}(v)=J(v)
$$
concluding the proof.
\end{proof}

From \cite{AFP}  it follows that the functionals $J_{\delta}$ are equicoercive in $L^1(\Omega)$ and hence as a consequence of 
the above theorem and the fundamental theorem of $\Gamma$-convergence  (see for example Theorem 7.4 of \cite{dalM}) we have
 \begin{corollary}
For any  $\delta>0$ let  $v_{\delta}$ be a minimizer of (\ref {mindelta}).  Then there exists a sequence $\delta_n\rightarrow 0$ as $n\rightarrow +\infty$  and a function $v=\chi_{\Omega_D}\in X_{0,1}$ such that $v_{\delta_n}\rightarrow v$ in $L^1(\Omega)$ and   $D$ is solution to Problem (\ref {minper}).
\end{corollary}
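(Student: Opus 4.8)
The plan is to derive this corollary as a direct application of the fundamental theorem of $\Gamma$-convergence (see e.g. Theorem~7.4 and Corollary~7.20 in \cite{dalM}) to the $\Gamma$-convergence established in Theorem~\ref{thm:Gconv_del}. That theorem supplies the convergence $\tilde J_{\delta_n}\to\tilde J$; what remains is to check the equicoercivity of the family $\{\tilde J_{\delta_n}\}$ in $L^1(\Omega)$ and then to translate the abstract conclusion back into the geometric statement about $D$.

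First I would verify equicoercivity. Fix $t\in\R$; if $\tilde J_{\delta_n}(v)\le t$ then, by \eqref{minregbis1}, $v\in X_{0,1}$ (so $0\le v\le 1$ a.e.\ in $\Omega$) and $\alpha\operatorname{TV}(v)\le t$, hence $\operatorname{TV}(v)\le t/\alpha$ uniformly in $n$. By the compactness theorem for $BV$ functions (\cite{AFP}), the set $\bigcup_n\{v: \tilde J_{\delta_n}(v)\le t\}$ is then relatively compact in $L^1(\Omega)$, which is exactly equicoercivity. One also notes that $\tilde J_{\delta_n}\not\equiv+\infty$: any $D\in\mathcal{D}$ (whose existence underlies Proposition~\ref{mincav}) yields a competitor $v=\chi_{\Omega_D}\in X_{0,1}$ of finite energy.

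Next, since $\tilde J_{\delta}=+\infty$ off $X_{0,1}$, a minimizer $v_\delta$ of \eqref{mindelta} is the same as a minimizer of $\tilde J_\delta$ over $L^1(\Omega)$. Equicoercivity gives a subsequence, still denoted $v_{\delta_n}$, with $v_{\delta_n}\to v$ in $L^1(\Omega)$, and the fundamental theorem of $\Gamma$-convergence then guarantees that $v$ minimizes $\tilde J$ and that $\tilde J_{\delta_n}(v_{\delta_n})\to\min\tilde J$. In particular $\tilde J(v)<+\infty$, so $v\in X_{0,1}$; applying Lemma~\ref{closeX} (together with the Hausdorff compactness of $\mathcal D$, Proposition~\ref{compactD}) we may write $v=\chi_{\Omega_D}$ with $D\in\mathcal{D}$.

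Finally I would translate back: for such $v$ one has $\operatorname{TV}(v)=\operatorname{Per}(D)$, $u(v)=u(D)$ and $\tilde J(v)=J(D)$, and $D\mapsto\chi_{\Omega_D}$ is a bijection between $\mathcal D$ and the effective domain of $\tilde J$ compatible with the two functionals; hence $v$ minimizing $\tilde J$ over $L^1(\Omega)$ means exactly that $D$ minimizes $J$ over $\mathcal D$, i.e.\ $D$ solves Problem~\eqref{minper}. The only genuinely delicate point is not the abstract $\Gamma$-convergence machinery but the identification of the $L^1$-limit as an element of $X_{0,1}$ of the form $\chi_{\Omega_D}$ with $D\in\mathcal{D}$ (and not merely a set of finite perimeter), which is precisely what Lemma~\ref{closeX} and Proposition~\ref{compactD} are there to provide.
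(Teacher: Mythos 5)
Your proposal is correct and follows essentially the same route as the paper, which simply invokes the equicoercivity of the $J_{\delta}$ in $L^1(\Omega)$ (via the $BV$ compactness from \cite{AFP}) together with Theorem \ref{thm:Gconv_del} and the fundamental theorem of $\Gamma$-convergence (Theorem 7.4 of \cite{dalM}). You merely spell out the details the paper leaves implicit, in particular the identification of the $L^1$-limit as $\chi_{\Omega_D}$ with $D\in\mathcal{D}$ via Lemma \ref{closeX}, which is consistent with how the paper intends the argument to run.
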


\section{Reconstruction algorithm}

\label{sec:numerics}
In this section, we describe a numerical algorithm which takes advantage of the relaxation strategy proposed in section \ref{sec:recon} for the reconstruction of cavities. In the first subsection, we analyze an algorithm tackling problem \eqref{minrel} - namely, the minimization of the functional $\Jade$ for fixed values of $\delta,\varepsilon$ - where we replace the potential $v^2(1-v)^2$ in $G_\varepsilon(v)$ by $v(1-v)$. This choice is in line with the assumptions of Remark \ref{rem:potential} and is preferable for numerical reasons, because of the efficiency of the implementation and of the superior performances in the reconstruction.
A similar algorithm, which was proposed in \cite{DES} for a linear equation, has already been studied for the reconstruction of inclusions in the considered nonlinear counterpart in \cite{BRV}: therefore, we summarize here the main convergence results, and outline a more efficient implementation. In the last subsection, instead, we propose an algorithm tackling problem \eqref{minv}, namely, the minimization of $J$ and thus the (stable) reconstruction of cavities.

\subsection{An iterative algorithm for the relaxed problem}
When fixing $\delta, \varepsilon>0$, the problem of minimizing $J_{\delta,\varepsilon}$ over $\mathcal{K}$ is analogous to what discussed for conductivity inclusions in \cite{BRV}, in which $\delta$ is replaced by the parameter $k$ denoting the physical conductivity inside the inclusion. To minimize the relaxed functional $\Jade$, we can take advantage of its differentiability. In particular, it is possible to prove the following result:
\begin{proposition}(see \cite[Proposition 2.10]{BRV})
    Under Assumptions \ref{as:1} - \ref{as:4} in  Section 2, for every fixed $\delta, \varepsilon>0$, the operator $u_\delta \colon \mathcal{K} \rightarrow H^1(\Omega)$ defined by \eqref{eq:probclassic}) is Fréchet-differentiable, and so is $\Jade: \mathcal{K} \rightarrow \R$. Moreover, for every $v \in \mathcal{K}$ and $\vartheta \in H^1(\Omega) \cap L^\infty(\Omega)$,
    \begin{equation}
    \begin{aligned}
        \Jade'(v)[\vartheta] = & \int_{\Omega}(1-\delta) \nabla u_\delta(v) \cdot \nabla p_\delta(v) \vartheta + \int_{\Omega} (1-\delta) u_\delta(v)^3 p_\delta(v) \vartheta \\
        &+  2 \alpha \varepsilon \int_{\Omega}\nabla v \cdot \nabla \vartheta  + \frac{\alpha}{\varepsilon}    \int_{\Omega}(1-2v)\vartheta;
            \end{aligned}
    \label{eq:Frechet_Jade}
    \end{equation}
where $p_\delta\colon \mathcal{K} \rightarrow H^1(\Omega)$ is the solution map of the \textit{adjoint problem}:  
\begin{equation}
    \int_{\Omega} a_\delta(v) \nabla p_\delta(v) \cdot \nabla \psi + \int_{\Omega} a_\delta(v) 3 u_\delta(v)^2 p_\delta(v) \psi = \int_{\partial \Omega}{(u_\delta(v)-u_{meas})\psi} \qquad \forall \psi \in H^1(\Omega).
\label{eq:adjoint}
\end{equation}
\label{prop:Frechet_Jade}
\end{proposition}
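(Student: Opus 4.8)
The plan is to obtain both assertions from the implicit function theorem in Banach spaces, after which the derivative formula follows by the chain rule together with the standard adjoint-state computation. First I would rewrite the weak form of \eqref{eq:probclassic} as $F(v,u)=0$, where $F\colon \mathcal{U}\times H^1(\Omega)\to H^1(\Omega)'$ is defined by
\[
\langle F(v,u),\phi\rangle=\int_\Omega a_\delta(v)\nabla u\cdot\nabla\phi+\int_\Omega a_\delta(v)u^3\phi-\int_\Omega f\phi ,
\]
and $\mathcal{U}$ is a neighbourhood of $\mathcal{K}$ in $H^1(\Omega)\cap L^\infty(\Omega)$ on which $a_\delta(v)=\delta+(1-\delta)v$ stays uniformly elliptic; the fact that the admissible directions $\vartheta$ are taken in $H^1(\Omega)\cap L^\infty(\Omega)$ is precisely what keeps $a_\delta(v+t\vartheta)$ elliptic for small $t$. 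Since in dimension two $H^1(\Omega)\hookrightarrow L^q(\Omega)$ for every $q<\infty$ and $u_\delta(v)\in L^\infty(\Omega)$ by \eqref{estimateudelta}, all terms are well defined, the cubic Nemytskii term is continuously differentiable between the relevant Lebesgue spaces, and $F$ is of class $C^1$.

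The main obstacle is showing that the partial differential $\partial_u F(v_0,u_\delta(v_0))$, which acts by
\[
\langle \partial_u F(v_0,u_\delta(v_0))[w],\phi\rangle=\int_\Omega a_\delta(v_0)\nabla w\cdot\nabla\phi+\int_\Omega 3 a_\delta(v_0)u_\delta(v_0)^2\, w\phi ,
\]
is an isomorphism of $H^1(\Omega)$ onto its dual for every $v_0\in\mathcal{K}$: its zeroth-order coefficient is only nonnegative and may vanish where $u_\delta(v_0)=0$, so coercivity is not automatic. This is settled exactly as in the proof of Proposition \ref{cont}: since $v_0\equiv 1$ on $\Omega_{d_0}$ we have $a_\delta(v_0)\equiv 1$ there, while $\int_{\Omega_{d_0}}u_\delta(v_0)^2>0$ because otherwise testing \eqref{eq:probclassic} with functions supported in $\Omega_{d_0}$ would force $f\equiv 0$; the weighted Poincaré inequality (Theorem A.1 in \cite{BCMP}), applied with weight proportional to $u_\delta(v_0)^2\chi_{\Omega_{d_0}}$ and combined with the ellipticity of the principal part, yields $\langle \partial_u F(v_0,u_\delta(v_0))[w],w\rangle\ge c\|w\|_{H^1(\Omega)}^2$, so Lax--Milgram applies. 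The implicit function theorem then gives that $u_\delta$ is of class $C^1$ (in particular Fréchet-differentiable) near each $v_0\in\mathcal{K}$, with $w:=u_\delta'(v_0)[\vartheta]$ characterised by
\[
\int_\Omega a_\delta(v_0)\nabla w\cdot\nabla\phi+\int_\Omega 3 a_\delta(v_0)u_\delta(v_0)^2 w\phi=-\int_\Omega(1-\delta)\vartheta\,\nabla u_\delta(v_0)\cdot\nabla\phi-\int_\Omega(1-\delta)\vartheta\,u_\delta(v_0)^3\phi
\]
for every $\phi\in H^1(\Omega)$.

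Finally I would differentiate $\Jade$. The regularization term is a polynomial expression in $v$ and $\nabla v$, hence smooth on $H^1(\Omega)$, and its differential produces the last two summands of \eqref{eq:Frechet_Jade}. The misfit term $\tfrac12\|u_\delta(v)-u_{meas}\|_{L^2(\Sigma)}^2$ is the composition of the $C^1$ map $v\mapsto u_\delta(v)$, the bounded trace operator $H^1(\Omega)\to L^2(\partial\Omega)$, and a smooth quadratic, hence is $C^1$ with differential $\int_\Sigma (u_\delta(v)-u_{meas})\,w\,d\sigma$, where $w=u_\delta'(v)[\vartheta]$. To turn this into the stated volume integral, introduce the adjoint state $p_\delta(v)$ via \eqref{eq:adjoint}: its bilinear form is the transpose of the linearised one above, so the same weighted-Poincaré/Lax--Milgram argument gives existence and uniqueness. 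Testing the linearised state equation with $\phi=p_\delta(v)$ and the adjoint equation with $\psi=w$, and using the symmetry of the bilinear form $(a,b)\mapsto\int_\Omega a_\delta(v)\nabla a\cdot\nabla b+\int_\Omega 3 a_\delta(v)u_\delta(v)^2 ab$, the two left-hand sides coincide; equating the right-hand sides eliminates $w$ and leaves
\[
\int_\Sigma (u_\delta(v)-u_{meas})\,w\,d\sigma=\int_\Omega(1-\delta)\nabla u_\delta(v)\cdot\nabla p_\delta(v)\,\vartheta+\int_\Omega(1-\delta)u_\delta(v)^3 p_\delta(v)\,\vartheta
\]
(the sign being the one consistent with the convention fixed in \eqref{eq:adjoint}). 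Adding the regularization differential gives exactly \eqref{eq:Frechet_Jade}. Apart from the coercivity issue resolved by the weighted Poincaré inequality, which is the genuinely delicate point, the remaining steps are routine.
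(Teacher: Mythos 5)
Your overall strategy --- implicit function theorem for the state map $v\mapsto u_\delta(v)$, coercivity of the linearized operator via the weighted Poincar\'e inequality of \cite{BCMP} exactly as in Proposition \ref{cont}, and the symmetric adjoint-state identity to eliminate $u_\delta'(v)[\vartheta]$ --- is the standard proof of this statement and is the one the paper itself relies on, since it does not reprove the proposition but delegates it to \cite[Proposition 2.10]{BRV}. In particular, your treatment of the only delicate point (the zeroth-order coefficient $3a_\delta(v)u_\delta(v)^2$ may vanish, so coercivity must be recovered from $a_\delta\equiv 1$ and $\int_{\Omega_{d_0}}u_\delta^2>0$ on $\Omega_{d_0}$) is correct and mirrors the argument already used in the paper.

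Two bookkeeping points, however, do not close as written. First, you define $F$ with the zeroth-order term $\int_\Omega a_\delta(v)u^3\phi$, whereas \eqref{eq:probclassic} literally reads $vu^3$; your choice is the one consistent with the coefficients $3a_\delta(v)u_\delta^2$ in \eqref{eq:adjoint} and $(1-\delta)u_\delta^3p_\delta$ in \eqref{eq:Frechet_Jade} (with $vu^3$ one would instead obtain $3vu_\delta^2$ and $u_\delta^3\vartheta$), so the discrepancy originates in the paper's statement, but a proof must say which convention it adopts. Second, and more substantively, the sign of your adjoint-state identity is the wrong one under the stated conventions. Writing $B(a,b)=\int_\Omega a_\delta(v)\nabla a\cdot\nabla b+3\int_\Omega a_\delta(v)u_\delta(v)^2ab$, testing the linearized state equation with $\phi=p_\delta(v)$ gives $B(w,p_\delta(v))=-(1-\delta)\int_\Omega\vartheta\left(\nabla u_\delta(v)\cdot\nabla p_\delta(v)+u_\delta(v)^3p_\delta(v)\right)$, while testing \eqref{eq:adjoint} with $\psi=w$ gives $B(p_\delta(v),w)=\int_{\partial\Omega}(u_\delta(v)-u_{meas})w$; symmetry of $B$ therefore yields $\int_\Sigma(u_\delta(v)-u_{meas})w=-(1-\delta)\int_\Omega\vartheta(\cdots)$, the opposite of the identity you display. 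Your parenthetical ``the sign being the one consistent with the convention fixed in \eqref{eq:adjoint}'' is precisely the point where the computation fails to close: either the adjoint must be defined with $-\int_{\partial\Omega}(u_\delta(v)-u_{meas})\psi$ on the right-hand side, or the first two terms of \eqref{eq:Frechet_Jade} must carry a minus sign. Since this expression is used as the descent direction in Algorithm \ref{alg:Jade}, the sign is not cosmetic and should be pinned down explicitly rather than deferred to a convention.
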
  
Taking advantage of the differentiability of $\Jade$ and of the convexity of $\mathcal{K}$, we can derive the following necessary optimality condition:
\begin{equation}
\text{if}\quad v^* \in \argmin_{v \in \mathcal{K}} \Jade(v), \qquad \text{then} \quad J_{\delta,\varepsilon}'(v^*)[v-v^*] \geq 0 \quad \forall v \in \mathcal{K}
    \label{eq:OC_Jade}
\end{equation}
Notice that such condition is not sufficient, unless some other properties are verified, such as the convexity of the functional $\Jade$.
We consider the following iterative algorithm, which takes advantage of the Fréchet differentiability of $\Jade$: in particular, the rationale of our strategy is to tackle the minimization of $\Jade$ by means of a sequence of linearized problems at some iterates $v^{(k)}$. The subsequent iterate is computed by minimizing a functional which consists of the first-order expansion of $\Jade$ around $v^{(k)}$ plus a term which penalizes the distance from $v^{(k)}$, due to the local effectiveness of the linearization. A tentative update scheme would read as
\begin{equation} \label{eq:min_move_Jade_expl}
v^{(k+1)} = \argmin_{v \in \mathcal{K}} \left\{ \frac{1}{2\tau_k} \| v - v^{(k)}\|_{L^2(\Omega)}^2 + J_{\delta,\varepsilon}'(v^{(k)})[v-v^{(k)}]  \right\},
\end{equation}
where $\{\tau_k\}$ is a sequence of prescribed step lengths. Since $J_{\delta,\varepsilon}'$ is evaluated in the previous iterate, \eqref{eq:min_move_Jade_expl} corresponds to an explicit scheme, and in the absence of the constraint on $\mathcal{K}$ it would reduce to the explicit Euler discretization of the gradient flow associated with $\Jade$. The explicit treatment of $J_{\delta,\varepsilon}'$ is beneficial for numerical reasons (due to the severe nonlinearity of the differential), but can lead to instabilities, which entails that the choice of the steplenght $\tau_k$ should be very conservative. As already exploited in \cite{DES} and in \cite{BRV}, we can provide a semi-implicit treatment of the derivative by splitting it in a linear part and a nonlinear one, and evaluating only the nonlinear part in the previous iterate. In particular, we define:
\[
\begin{aligned}
\tilde{J}_{\delta,\varepsilon}'(v^{(k)})[v-v^{(k)}] = & \int_{\Omega}(1-\delta) \nabla u_\delta(v^{(k)}) \cdot \nabla p_\delta(v^{(k)}) (v-v^{(k)}) + \int_{\Omega} (1-\delta) u_\delta(v^{(k)})^3 p_\delta(v^{(k)}) (v-v^{(k)}) \\
        &+\frac{\alpha}{\varepsilon} \int_{\Omega}(1-2v^{(k)})(v-v^{(k)}) +  2 \alpha \varepsilon \int_{\Omega}\nabla v \cdot \nabla \vartheta,
\end{aligned}
\]
where only the last term has been treated implicitly. Through this definition, we finally describe our iterative scheme as:
\begin{equation} \label{eq:min_move_Jade}
v^{(k+1)} = \argmin_{v \in \mathcal{K}} \left\{ \frac{1}{2\tau_k} \| v - v^{(k)}\|_{L^2(\Omega)}^2 + \tilde{J}_{\delta,\varepsilon}'(v^{(k)})[v-v^{(k)}]  \right\}
\end{equation}
for prescribed timesteps $\tau_k$.
At each iteration, the algorithm requires to solve an inner minimization problem associated with a quadratic functional on the convex set $\mathcal{K}$, which can be treated by standard tools of convex optimization.
Unfortunately, since the functional $\Jade$ is in general non convex, the convergence of $v^{(k)}$ to a minimizer is not ensured: nevertheless, we aim to prove that, in the limit, the iterates reach a stationary point, namely, an element $v^*$ which satisfies the (necessary) optimality conditions \eqref{eq:OC_Jade}. 
As outlined in \cite{DES}, expression \eqref{eq:min_move_Jade} resembles the discretization of a parabolic obstacle problem, and it is more generally reminiscent of De Giorgi's theory of minimizing movements for differentiable functionals (see \cite[Chapter 7]{braides2014local}). Finally, analogously to what is done in \cite{DES} and \cite{BRV}, we can prove the convergence to a stationary point only in a fully discretized context. 

\begin{remark}
\label{convrelax}
Notice that we are minimizing the functional $\Jade$ in $\mathcal{K}$ instead of $\mathcal{K}_\eta$. This discrepancy between theory and practice is necessary for our purposes, since the non-convexity of the space $\mathcal{K}_\eta$ would not allow to use standard first-order optimization schemes. Nevertheless, numerical evidence from section \ref{sec:results} will show that the algorithm converges to a point belonging to $\mathcal{K}_\eta$: thus, we can consider the minimization within $\mathcal{K}$ as a convex relaxation of the original problem in $\mathcal{K}_\eta$.
\end{remark}

\subsubsection{Discretization of the forward and adjoint boundary value problems}
In order to numerically solve the boundary value problem \eqref{eq:probclassic}, we consider a finite element formulation, which also entails a numerical approximation of the minimization problem \eqref{minrel} we are tackling.
\par In what follows, we introduce a a shape regular triangulation $\mathcal{T}_h$ of $\Omega$, on which we define $V_h \subset H^1(\Omega)$:
\[
    V_h = \{ w_h \in C(\bar{\Omega}), w_h|_K \in \mathbb{P}_1(K) \text{ } \forall K \in \mathcal{T}_h \}; \qquad \mathcal{K}_h = V_h \cap \mathcal{K},
\]
where $\mathbb{P}_1(K)$ denotes the space of polynomials of order $1$ on a domain $K$. 
A discrete counterpart of is provided by considering its weak formulation in $V_h$, which can be interpreted as a nonlinear system of algebraic equations, and approximately solved by means of a Newton-Rhapson algorithm. Moreover, \cite[Proposition 3.1]{BRV} shows that, if we consider an approximation $v_h \in \mathcal{K}_h$ of the indicator function $v \in \mathcal{K}$, and denote the discrete solution associated with $v_h$ as $u_{\delta,h}(v_h)$, then $u_{\delta,h}(v_h) \rightarrow u_\delta(v)$ as the mesh size $h$ reduces. We can analogously introduce the approximate solution $p_{\delta,h}$ of the adjoint equation \eqref{eq:adjoint}, and the discrete version $J_{\delta,\varepsilon,h}$ of the functional \eqref{minrel}, together with its optimality conditions \eqref{eq:OC_Jade}.
Finally, \cite[Proposition 3.4]{BRV} guarantees that, choosing a starting point $v^{(0)}_h \in \mathcal{K}_h$, there exists a collection of timesteps $\{\tau_k\}$ satisfying $0<\tau_{\min} \leq \tau_k \leq \tau_{\max}$ such that the sequence generated by \eqref{eq:min_move_Jade} (where $\Jade$ is replaced by $J_{\delta,\varepsilon,h}$) converges in $W^{1,\infty}$ up to a subsequence to a point satisfying the discrete optimality conditions. 
\subsubsection{Implementation aspects}
By means of \cite[Proposition 3.4]{BRV} and of the ancillary result \cite[Lemma 3.2]{BRV}, we know that the discretized functional $\Jade$ reduces along the iterates of \eqref{eq:min_move_Jade} for a value of $\tau_k$ which is sufficiently small, within an interval $[\tau_{\min}, \tau_{\max}]$. This suggest the possibility to enhanche the iterative algorithm with an adaptive choice of the steplength $\tau_k$, which allows to enlarge it - and therefore save iterations - or to reduce it to guarantee the decrease of the functional across the iterations: 
\begin{algorithm}{Reconstruction of critical points of $\Jade$} \label{alg:Jade}
\begin{enumerate}
    \item choose an initial guess $v^{(0)}_h \in \mathcal{K}_h$ and a step size $\tau_0$
    \item for $k = 0, \ldots, K_{\max}$
    \begin{itemize}
        \item \textbf{if} $k==0$
        \item[] \begin{itemize}
        \item set $\tilde{v}_h^{(k+1)} = v_h^{(k)}$;
        \end{itemize}
        \item[] \textbf{else} 
        \item[] \begin{itemize}
            \item compute $\tilde{v}_h^{(k+1)}$ from $v_h^{(k)}$, $\tilde{J}'_{\delta,\varepsilon,h}(v_h^{(k)})$ and $\tau_k$ via \eqref{eq:min_move_Jade};
        \end{itemize}
        \item compute $u_{\delta,h}(\tilde{v}_h^{(k+1)})$ and $J_{\delta,\varepsilon,h}(\tilde{v}_h^{(k+1)})$
        \item \textbf{if} $\Jade(\tilde{v}_h^{(k+1)}) > \Jade(v_h^{(k)})$
        \item[]
        \begin{itemize}
            \item reduce the steplength $\tau_k$;
        \end{itemize}
        \item[] \textbf{else} 
        \item[] \begin{itemize}
            \item increase the steplength $\tau_k$;
            \item accept the iteration: $v_h^{(k+1)} = \tilde{v}_h^{(k+1)}$ and $u_{\delta,h}(v_h^{(k+1)}) = u_{\delta,h}(\tilde{v}_h^{(k+1)})$
            \item compute $p_{\delta,h}(v_h^{(k+1)})$ and $\tilde{J}'_{\delta,\varepsilon,h}(v_h^{(k+1)})$
            \item check the stopping criterion on $v_h^{(k+1)}$ and increase $k$;
        \end{itemize}
    \end{itemize}
\end{enumerate}
\end{algorithm}
The algorithm is moreover coupled with an adaptive mesh refinement routine: indeed, the iterates $v_h^{(k)}$ are expected to show some regions of diffuse interface, approximating the jump set of the indicator function of the true cavity. The thickness of such regions scales according to $\varepsilon$: in order to precisely capture the support of the gradient, without excessively increasing the total number of elements in $\mathcal{T}_h$, we locally refine the mesh according to the gradient of $v_h^{(k)}$ every $N_{\operatorname{adapt}}=30$ steps. We nevertheless fix a minimum size $h_{\min}=10^{-3}$.

\begin{remark}
In Algorithm \ref{alg:Jade}, the (tentative) update $\tilde{v}_h^{(k+1)}$ is computed by solving the semi-implicit scheme \eqref{eq:min_move_Jade}. As in \cite{DES} and in \cite{BRV}, this is done by means of the Primal-Dual Active Set algorithm, which requires a small number of (sub)iterations. According to the interpretation proposed in \cite{hintermuller2002primal}, we can consider PDAS as a generalized Newton's algorithm for the solution of \eqref{eq:min_move_Jade}, where the constraint is included in the form of a Lagrange multiplier. Alternatively, we observe that the explicit scheme \eqref{eq:min_move_Jade_expl} also admits the following alternative formulation:
\begin{equation}
v^{(k+1)} =\proj_\mathcal{K}\big(v^{(k)}- \tau_k \nabla \Jade(v^{(k)})\big)
    \label{eq:projGrad}
\end{equation}
where $\proj_\mathcal{K}$ is the orthogonal projection on the compact set $\mathcal{K}$ and $\nabla \Jade(v^{(k)})$ is the Fréchet gradient of $\Jade$, representing the Fréchet differential $J'_{\delta,\varepsilon}(v^{(k)})$. This approach, which allows to avoid subroutines, has also been investigated in \cite{blank}, where it has been applied to a linear elasticity problem. In our case, preliminary tests have not shown a significant discrepancy between the usage of PDAS or of \eqref{eq:projGrad}, therefore we make use of the former.
\end{remark}

\subsection{An iterative algorithm for the regularized problem}

In this subsection, we tackle the minimization of the functional $J$, namely, the stable reconstruction of cavities via perimeter-based regularization. The core idea of our approach is to iteratively apply Algorithm \ref{alg:Jade} for decreasing values of $\epsilon$ and $\delta$, using the stationary point of the previous $J_{\delta_n,\varepsilon_n}$ as a starting point for the minimization of the new functional.

\begin{algorithm}{Reconstruction of critical points of $J$} \label{alg:J}
\begin{enumerate}
    \item select initial values $(\varepsilon_0,\delta_0)$
    \item start from an initial guess $v_h^{(0,0)}$
    \item for $n = 0, 1, \ldots$
        \begin{itemize}
            \item apply Algorithm \ref{alg:Jade} on $v_h^{(0,n)}$ until convergence to $v_h^{(*,n)}$
\item update $(\varepsilon_{n+1},\delta_{n+1})$
\item set $v_h^{(0,n+1)} = v_h^{(*,n)}$.
        \end{itemize}
\end{enumerate}
\end{algorithm}

Despite it is impossible to prove the convergence of the iterates to a minimizer of $J$, such an algorithm is motivated by several considerations. \\
Firstly, for large values of $\varepsilon$, we conjecture that the (discrete version of the) functional $\Jade$ is convex, as it has been proved in \cite[Theorem 3.1]{burger2006phase} for a linear elasticity problem. In this case, Algorithm \ref{alg:Jade} is expected to converge to a global minimum of $\Jade$ (see, e.g. \cite[Corollary 27.10]{bauschke2011convex} regarding the projected gradient scheme). This is also supported by numerical evidence: the first steps of Algorithm \ref{alg:J} are done efficiently, and provide a good starting point for the ones with smaller $\varepsilon$.
\\
Secondly, the sequence of stationary points $v_h^{(*,n)}$ generated by Algorithm \ref{alg:J} is supposed to converge to a stationary point of $J$. Notice that this is not guaranteed by the $\Gamma$-convergence of the functional $\Jade$ to $J$ (separately in $\varepsilon$ and in $\delta$), because we cannot guarantee that $v_h^{(*,n)}$ are minimizers. Nevertheless, e.g. \cite[Theorem 8.1]{braides2014local} shows how to define a minimizing movement for a limit functional starting from a minimizing movent along a sequence of functionals, and in particular \cite[Chapter 11]{braides2014local} and \cite{sternberg2009critical} provides conditions under which a sequence of critical points converge to a stationary point for $J$. Unfortunately, the functionals $\Jade$ and $J$ do not match the required assumptions: in particular, $J$ is not convex and not differentiable, due to its definition on the non-convex space $X_{0,1}$.

\section{Numerical experiments}
\label{sec:results}

In this section, after a brief resume of the numerical setting for the simulations, we report and comment the results of our numerical experiments. We first analyze the performance of Algorithm \ref{alg:Jade}, particularly focusing on the dependence of the solution on the choice of the parameters $\varepsilon$ and $\delta$. Then, we move to the study of Algorithm \ref{alg:J}, of which we assess the effectiveness even on complicated shapes, and the robustness with respect to noisy data.

\subsection{Setup}

In all the experiments, we consider $\Omega \subset \R^2$ the unitary ball centered in the origin and assume to have access to the measurement on $\Gamma = \partial \Omega$. Both Algorithm \ref{alg:Jade} and \ref{alg:J} are tested making use of synthetic data: in particular, the boundary datum $u_{\operatorname{meas}}$ is generated by solving the forward problem in the presence of the true inclusion, and perturbing it with some additive Gaussian noise. To do so, we need to create an alternative mesh $\mathcal{T}_h^{\operatorname{ex}}$ of the domain $\Omega$ in the presence of the exact cavity and the associated finite element space $V_h^{\operatorname{ex}}$.
The value of the solution at the external boundary is then interpolated on the boundary of the mesh $\mathcal{T}_h$ which is used for the reconstruction, and which does not contain any hole. Notice that this whole procedure also prevents the presence of an \textit{inverse crime}, which occurs whenever the exact data are simulated via the same model that is employed by the reconstruction algorithm.
\par
Moreover, we perform reconstructions from multiple measurements: namely, we assume that $N_{\operatorname{meas}}$ measurements $u_{meas}^i$ are available, respectively associated to different sources $f^i$ in \eqref{probcav}. In the expressions of $\Jade$ (and analogously for $J$), the data mismatch term is thus replaced by an average of the mismatch of every $u_\delta^i(v)$ with respect to $u_{meas}^i$, where $u_\delta^i(v)$ is the solution of \eqref{eq:probclassic} in the presence of a cavity $v$ and with forcing term $f^i$. In order to comply with Assumption \ref{as:4} on $f$, we consider $N_{\operatorname{meas}} = 4$ measurements associated with the sources
\[ 
f^i(x,y) = \text{exp}\left\{-\frac{(x-x_i)^2+(y-y_i)^2}{r_f^2}\right\}; \quad (x_i,y_i) = R_f\left(\cos\left(\frac{i\pi}{N}\right),\sin\left(\frac{i\pi}{N_{\operatorname{meas}}}\right)\right),
\]
which are well localized in space close to the points $(x_i,y_i)$, that are sufficiently close to the boundary (and far from the cavity) depending on $R_f$.
Every datum $u_{meas}^i$ is generated by considering the traces of the exact solution associated to $f^i$ and adding random noise with Gaussian distribution with null mean and standard deviation equal to $\eta_{\text{noise}}= \eta_{\operatorname{noise}}\max\{u^i(x): x \in \Gamma \}$. Whenever not specified, we consider a $1\%$ noise level: $\eta_{\operatorname{noise}}=0.01$. 
\par
All computations are implemented with Matlab R2021a, running on a laptop with 16GB RAM and 2.2GHz CPU. We acknowledge the use of the MATLAB redbKIT library \cite{redbKIT} for the implementation of the finite element assemblers.

\subsection{Algorithm \ref{alg:Jade}: numerical results}

As depicted in section \ref{sec:numerics}, Algorithm \ref{alg:Jade} is a more efficient version of the one proposed in \cite{BRV}, to which we refer for a complete numerical analysis. In the current study, we are mostly interested in reporting the behavior of the reconstructed solution with respect to $\varepsilon$ and $\delta$.

The phase-field parameter $\varepsilon$ is strictly connected with the so-called diffuse interface region. Indeed, the minimizer of $\Jade$ is expected to be different from $\{0,1\}$ only in a small region, typically corresponding to a tubular neighborhood of the boundary of a reconstructed cavity, whose width is proportional to $\varepsilon$. A small value of $\varepsilon$ is thus preferred, but requires a sufficient refinement of the mesh, which is attained without affecting the efficiency of the algorithm by means of a local adaptive refinement. 
In Figure \ref{fig:eps} we set $\delta = 10^{-5}$ and compare the reconstructions associated with different values of $\delta$, ranging from $0.0125$ to $0.05$. In each graphic, we report a contour plot of the reconstructed indicator function, together with a dashed line denoting the boundary of the exact inclusion. It is possible to notice the dependence of thickness of the diffusion interface region from $\varepsilon$.

\begin{figure}
\subfloat[$\varepsilon = 0.05$]{\includegraphics[width=0.33\textwidth]{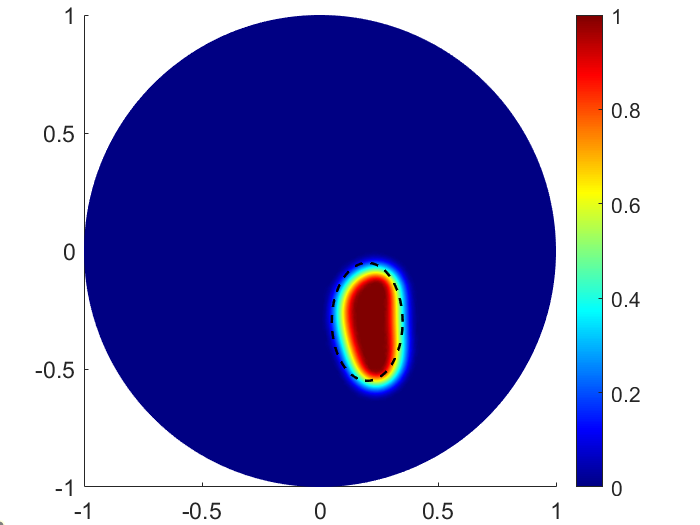}}   
\subfloat[$\varepsilon = 0.025$]{\includegraphics[width=0.33\textwidth]{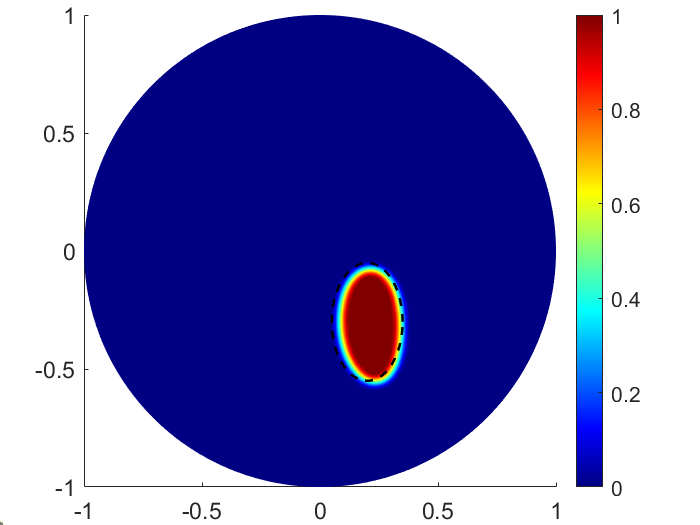}}
\subfloat[$\varepsilon = 0.0125$]{\includegraphics[width=0.33\textwidth]{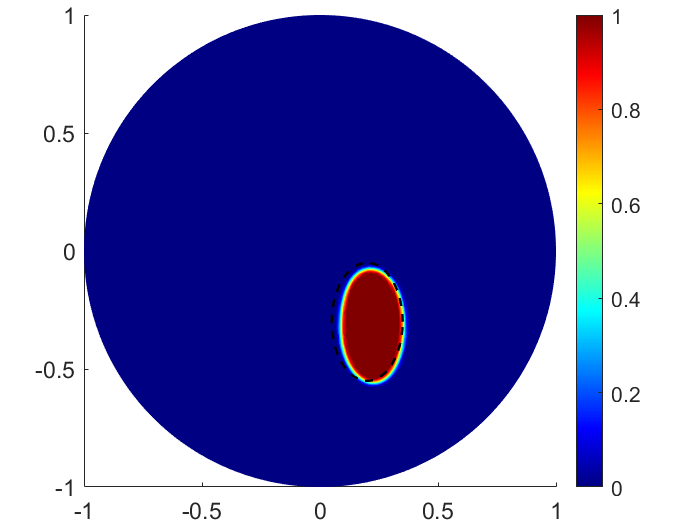}}
\caption{Dependence of the reconstruction on $\varepsilon$}
    \label{fig:eps}
\end{figure}

The fictitious conductivity $\delta$ can be chosen independently of $\varepsilon$. If $\delta$ is close to $1$, equation \eqref{eq:probclassic} becomes significantly different from a cavity problem \eqref{probcav}, thus the reconstruction is expected to be less accurate; whereas for much smaller values $\delta$ the forward problem becomes numerically unstable. In particular, it is easy to show that the $H^1$ norm of $u_{\delta,h}(v)$ is bounded by a term scaling as $\frac{1}{\delta}$. Also in this case, nevertheless, a local refinement in the region where the gradient of $v_h^{(k)}$ is steep is beneficial to reduce the ill-conditioning of the problem. In Figure \ref{fig:delta} we set $\varepsilon = 0.025$ and compare the reconstructions associated to different values of $\delta$, ranging from $10^{-5}$ to $10^{-3}$.
\begin{figure}
\subfloat[$\delta = 10^{-3}$]{\includegraphics[width=0.33\textwidth]{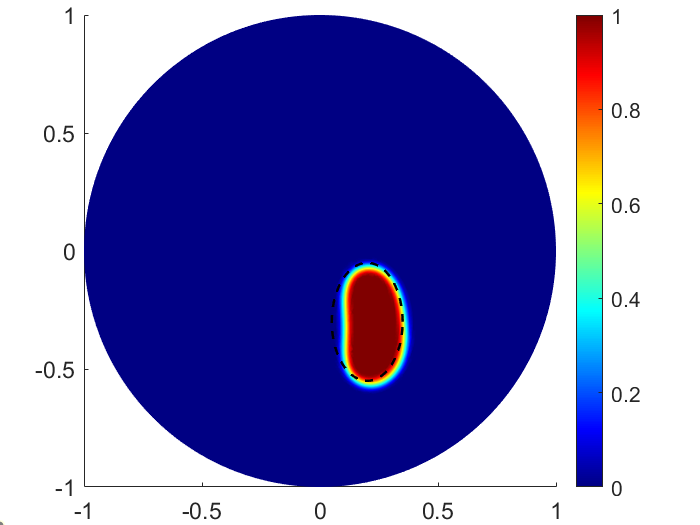}} 
\subfloat[$\delta = 10^{-4}$]{\includegraphics[width=0.33\textwidth]{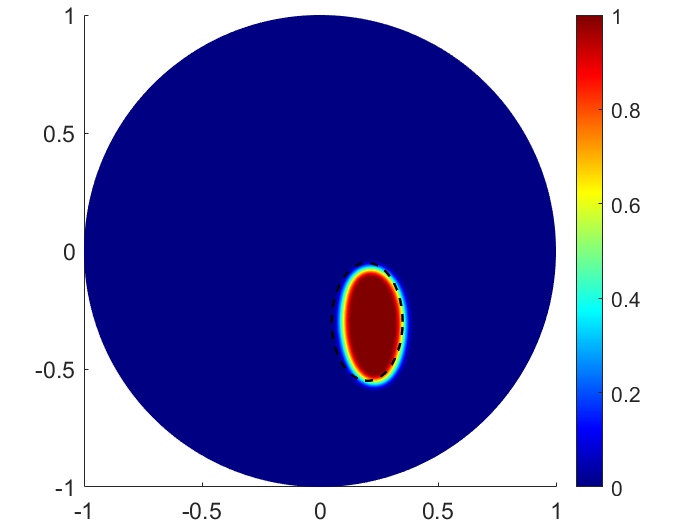}}
\subfloat[$\delta = 10^{-5}$]{\includegraphics[width=0.33\textwidth]{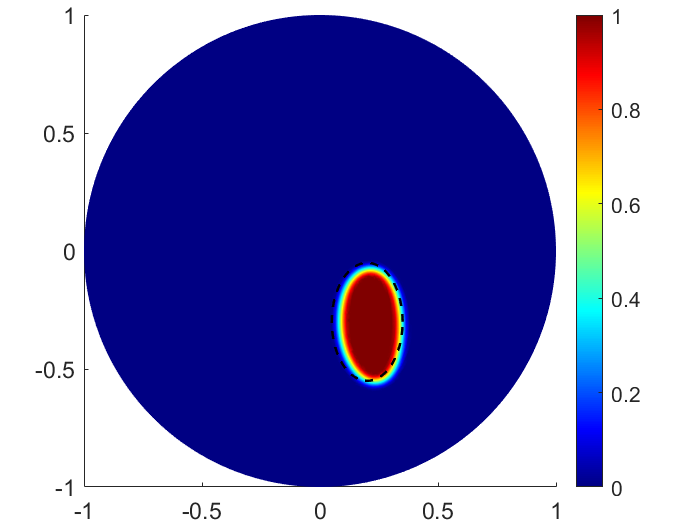}}
\caption{Dependence of the reconstruction on $\delta$}
    \label{fig:delta}
\end{figure}

In all the proposed examples, the regularization parameter $\alpha$ is chosen heuristically, and we use as a stopping criterion the relative distance between the iterates. The number of iterations required to reach convergence ranges between $100$ and $200$, which consists in a significant speedup with respect to the case without the step adaptation, which often requires over a thousand iterations (see \cite{BRV})

\subsection{Algorithm \ref{alg:J}: numerical results}

In the numerical implementation of Algorithm \ref{alg:J}, we initialize $\varepsilon$, $\delta$ by $\varepsilon_0 = 0.1$ and $\delta_0 = 10^{-2}$, and reduce them by a factor $4$ and $10$, respectively. The initial guess for $v_h^{(0,0)}$ is a constant function of value $0$.
In Figures \ref{fig:combined2} we show some results of the application of the combined algorithm for the minimization of $\Jade$ for the reconstruction of a polygonal cavity. 

\begin{figure}
\subfloat[Iteration 60]{\includegraphics[width=0.33\textwidth]{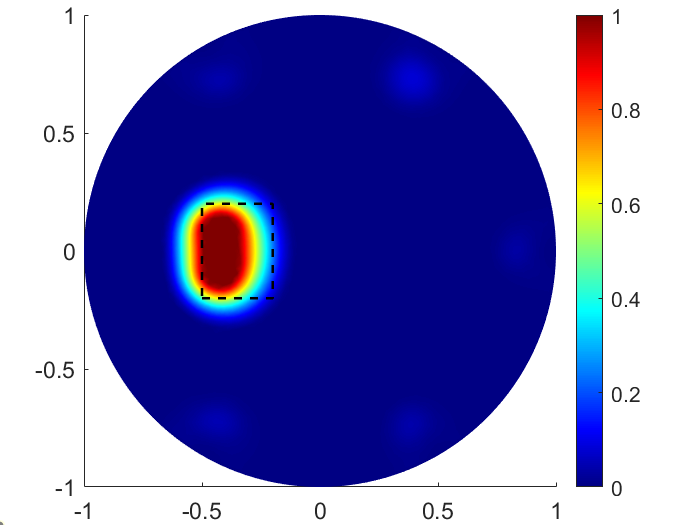}} 
\subfloat[Iteration 120]{\includegraphics[width=0.33\textwidth]{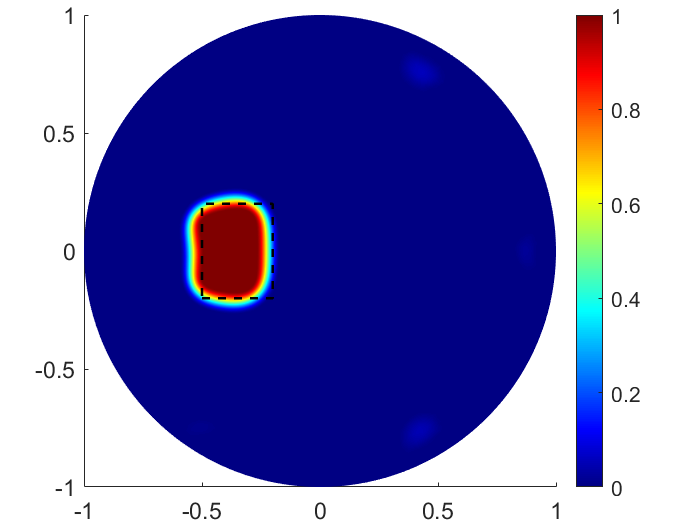}}
\subfloat[Iteration 180]{\includegraphics[width=0.33\textwidth]{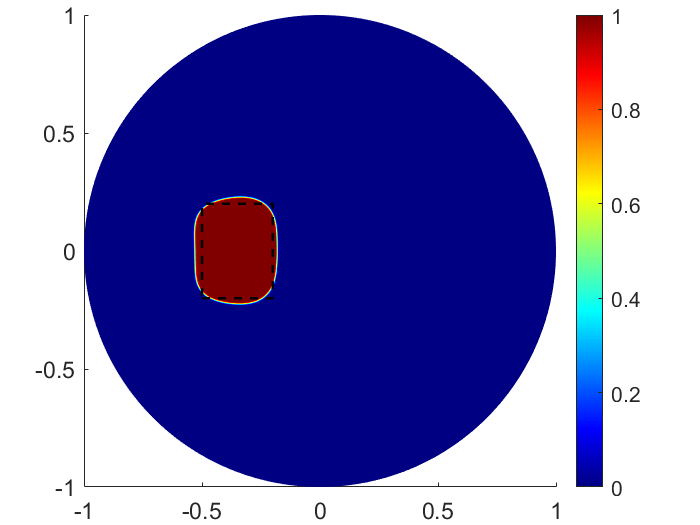}}
\caption{Combined algorithm: results}
    \label{fig:combined2}
\end{figure}

In Figure \ref{fig:nonconv} we report some additional results showing that the algorithm can effectively tackle the reconstruction of more complicated domains, such as non-convex ones and ones consisting of more than a single connected component. 
\begin{figure}
\subfloat[]{\includegraphics[width=0.33\textwidth]{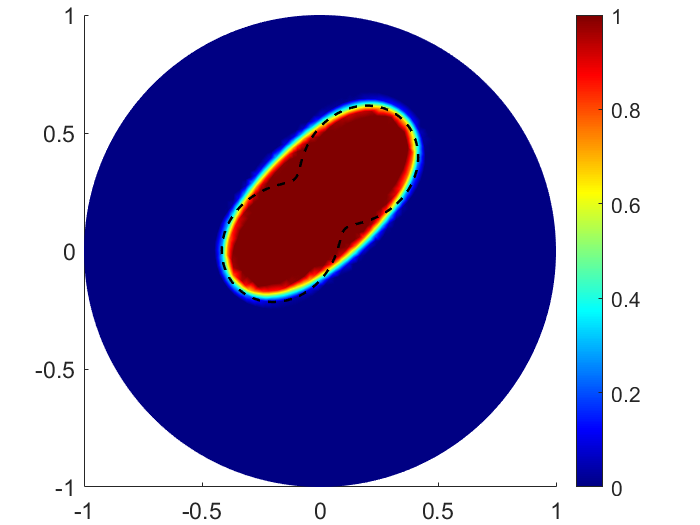}}
\subfloat[]{\includegraphics[width=0.33\textwidth]{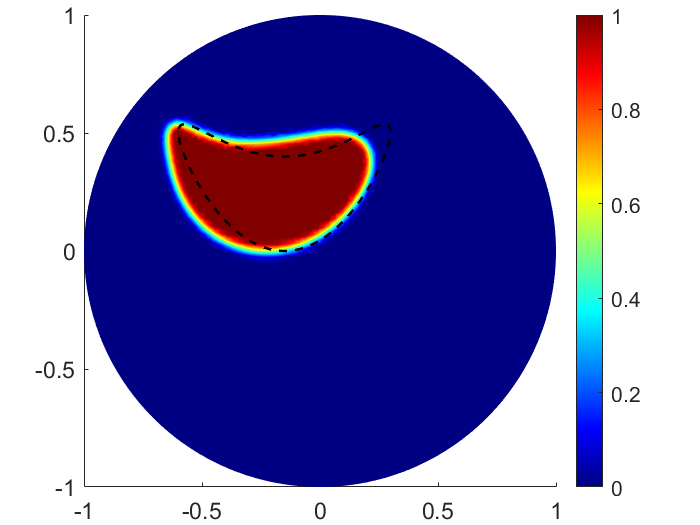}}
\subfloat[]{\includegraphics[width=0.33\textwidth]{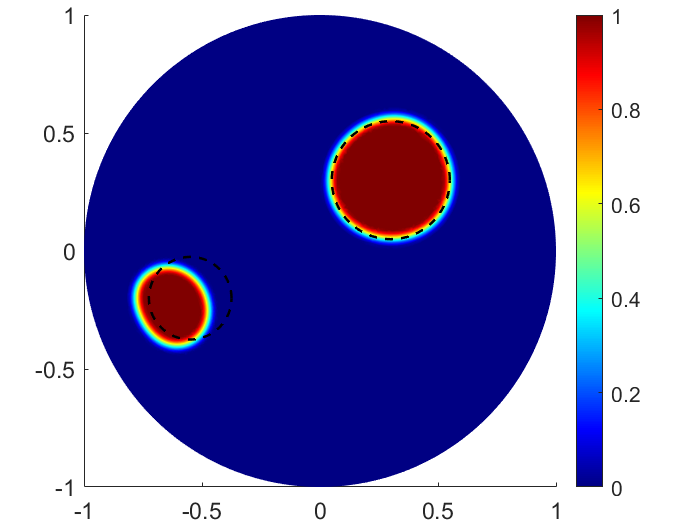}}
\caption{Additional reconstructions: non-convex domains}
    \label{fig:nonconv}
\end{figure}

As a final study, we discuss the behavior of the proposed algorithm in the presence of higher noise level. As previously explained, all the simulations analyzed so far are based on synthetic data perturbed by a Gaussian noise with variance equal to the $1\%$ of the peak value of the signal. In Figure \ref{fig:noise}, we report the reconstructions associated with the same inclusion, but with larger level of noise ((a): $2\%$, (b): $5\%$). As depicted in (c), a higher level of noise can be treated by increasing the value of the regularization parameter $\alpha$, at the price of a lower quality of the reconstruction.
\begin{figure}
\subfloat[$\eta_{\operatorname{noise}} = 0.02, \alpha = 10^{-5}$ ]{\includegraphics[width=0.33\textwidth]{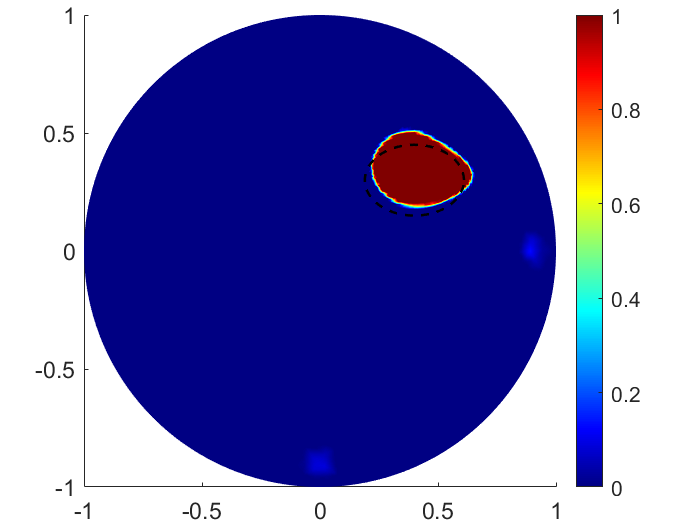}}
\subfloat[$\eta_{\operatorname{noise}} = 0.05, \alpha = 10^{-5}$ ]{\includegraphics[width=0.33\textwidth]{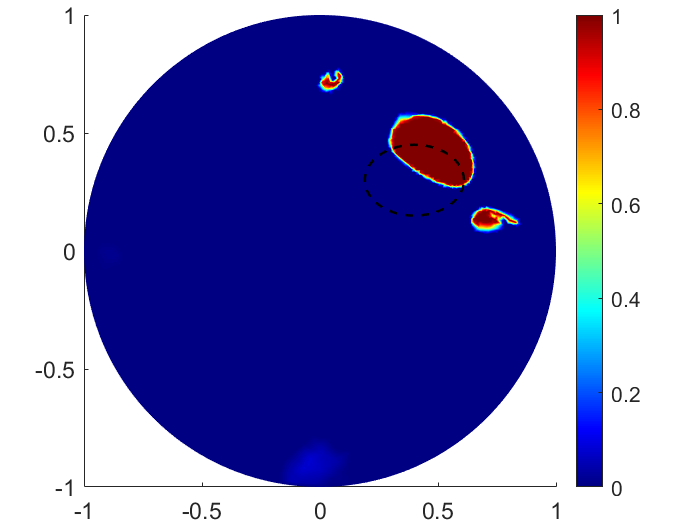}}
\subfloat[$\eta_{\operatorname{noise}} = 0.05, \alpha = 10^{-4}$ ]{\includegraphics[width=0.33\textwidth]{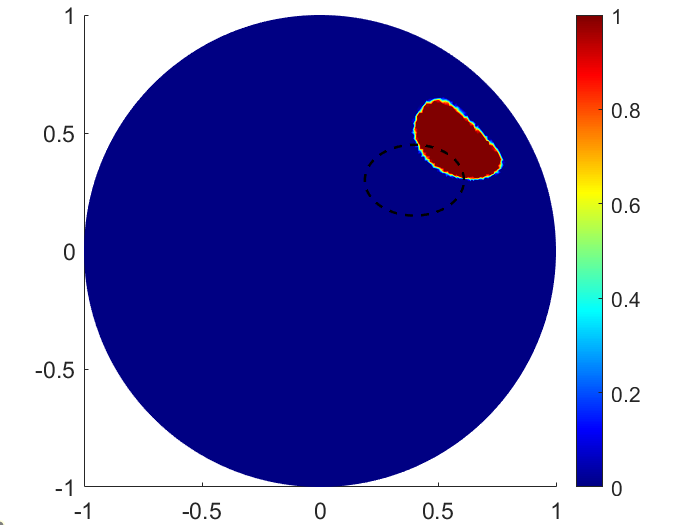}}
\caption{Reconstruction in the presence of large noise}
    \label{fig:noise}
\end{figure}

\section{Final remarks}
We have analyzed the problem of reconstructing Lipschitz cavities from boundary measurements in a model arising from cardiac electrophysiology. 
The reconstruction algorithm relies on a detailed investigation of the dependence of the solutions to the direct problem on the cavities and is based on a phase-field approach that we justify via $\Gamma-$ convergence of a relaxed family of functionals
$J_{\varepsilon,\delta}$ to the original penalized misfit functional $J$. This implies convergence of minima of $J{\varepsilon,\delta}$ to minima of $J$.  

In order to prove our result we have to restrict the relaxed functionals to a non convex subset $\mathcal{K}_{\eta}$ of the convex set of admissible functions $\mathcal{K}$, while in the numerical algorithm we need to minimize the approximating functionals $J_{\varepsilon,\delta}$ over the whole convex set $\mathcal{K}$; nevertheless, as 
discussed in remark \ref{convrelax}, numerical calculations seem to indicate that the minima of the functional  $J_{\delta,\epsilon}$ in $\mathcal{K}$ belong to $\mathcal{K}_{\eta}$. 

Although we have not found a theoretical justification to this property, it could be useful to remark that the $\Gamma-$ convergence of $J_{\delta,\epsilon}$ to $J_{\delta}$ (theorem \ref{thm:Gconv_eps}) and the resulting convergence of the minima (corollary \ref{convmin}) may also be achieved on different subsets $\mathcal{H}\subseteq \mathcal{K}$. 

In fact, by inspection of the proof of the above results, one finds that $\mathcal{H}$ should be a weakly closed subset of $H^1(\Omega)$
such that:

\begin{itemize}
 \item if $v_n\in\mathcal{H}$ is such that $v_n\rightarrow \chi_{\Omega_D}$ in $L^1(\Omega)$, then $D\in {\mathcal D}$, where ${\mathcal D}$ was defined in Assumption $3$;
    \item $\mathcal{H}$ contains the functions $v_{\varepsilon,\beta}$ defined in the proof of theorem \ref{thm:Gconv_eps}) for some $\eta>0$.
\end{itemize}

Note that the first condition is needed in the proof of the lim inf property and the last one for the lim sup property.
It is not clear to us if it is possible to construct a subset $\mathcal{H}$ which is also convex (this would somehow justify the 'convex relaxation' argument of remark \ref{convrelax}).

\section{Acknowledgements}
We would like to thank Giovanni Bellettini for the stimulating and useful suggestions.
The work of LR is supported by the Air Force Office of Scientific Research under award number FA8655-20-1-7027. 
The authors are members of the ``Gruppo Nazionale per l'Analisi Matematica, la Probabilit\`a e le loro Applicazioni'' (GNAMPA), of the ``Istituto Nazionale per l'Alta Matematica'' (INdAM).

 \section{Appendix}

In this appendix we prove Proposition \ref{convudeltan} where we make use of a suitable version of a Caccioppoli type inequality which we prove below.
 \begin{theorem}
 (Caccioppoli type)
 Let $A=A(x)$, $x\in B_{2R}$ be a symmetric $n\times n$ matrix, $L^\infty$ and elliptic.
 Let $w\in L^\infty(B_{2R})$ be a weight such that $0<\delta\leq w\leq1$ a.e. in $B_{2R}$ and $u$ a solution in a weak sense to

 $$-\div(wA\nabla u) +wu^3.$$

 Then
\begin{equation}
 \label{eq:caccioppoli2}   
 \int_{B_R}w\,|\nabla u|^2 \leq \frac{\tilde{C}}{R^2}\int_{B_{2R}}w\;u^2=0 \ \ \ \rm{in} \ \ B_{2R}
 \end{equation}

 \end{theorem}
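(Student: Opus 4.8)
The plan is to prove \eqref{eq:caccioppoli2} by the classical cutoff--and--test--function argument, taking care to keep the weight $w$ on \emph{both} sides of every inequality, so that the resulting constant $\tilde{C}$ depends only on the ellipticity constants of $A$ and on the dimension --- and, crucially, not on $\delta$. This uniformity in $\delta$ is exactly what makes the inequality useful in the analysis of Proposition \ref{convudeltan}.

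First I would fix a cutoff function $\zeta\in C_c^\infty(B_{2R})$ with $0\le\zeta\le 1$, $\zeta\equiv 1$ on $B_R$ and $|\nabla\zeta|\le c/R$, and insert the test function $\phi=\zeta^2 u$ into the weak formulation of $-\div(wA\nabla u)+wu^3=0$, namely
\[
\int_{B_{2R}} wA\nabla u\cdot\nabla\phi + \int_{B_{2R}} wu^3\phi = 0 .
\]
Since for fixed $\delta>0$ the solution belongs to $H^1(B_{2R})$, one has $\phi\in H^1_0(B_{2R})$, and in dimension two $H^1\hookrightarrow L^p$ for every $p<\infty$ guarantees that the cubic term is integrable. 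Expanding $\nabla\phi=\zeta^2\nabla u+2\zeta u\nabla\zeta$ gives
\[
\int_{B_{2R}} w\zeta^2\,A\nabla u\cdot\nabla u + 2\int_{B_{2R}} w\zeta u\,A\nabla u\cdot\nabla\zeta + \int_{B_{2R}} w\zeta^2 u^4 = 0 .
\]

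The key observation is that the last integral is nonnegative because $w\ge 0$, so it can simply be discarded: here only the \emph{sign} of the nonlinearity is used, which is why no $L^\infty$ bound on $u$ is needed. Using ellipticity of $A$ (i.e.\ $A\nabla u\cdot\nabla u\ge\lambda|\nabla u|^2$ and $|A\nabla u|\le\Lambda|\nabla u|$, with $\Lambda$ the $L^\infty$ bound) together with $w\ge 0$, I would then get
\[
\lambda\int_{B_{2R}} w\zeta^2|\nabla u|^2 \le 2\Lambda\int_{B_{2R}} w\zeta|u|\,|\nabla u|\,|\nabla\zeta| ,
\]
and apply Young's inequality $2\Lambda\zeta|u|\,|\nabla u|\,|\nabla\zeta|\le\tfrac{\lambda}{2}\zeta^2|\nabla u|^2+\tfrac{2\Lambda^2}{\lambda}u^2|\nabla\zeta|^2$ (again multiplying through by the nonnegative weight $w$) to absorb the gradient term on the left. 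This yields
\[
\frac{\lambda}{2}\int_{B_{2R}} w\zeta^2|\nabla u|^2 \le \frac{2\Lambda^2}{\lambda}\int_{B_{2R}} w\,u^2|\nabla\zeta|^2 \le \frac{2\Lambda^2 c^2}{\lambda R^2}\int_{B_{2R}} w\,u^2 ,
\]
and the claim follows with $\tilde{C}=4\Lambda^2 c^2/\lambda^2$ after restricting the left-hand integral to $B_R$, where $\zeta\equiv 1$.

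There is no serious obstacle: the estimate is routine once one commits to the right test function. The only points deserving care are (i) checking that $\zeta^2 u$ is a legitimate test function and that all integrals --- in particular $\int w\zeta^2 u^4$ --- are finite, which in the two-dimensional setting is immediate from Sobolev embedding; and (ii) resisting the temptation to bound $w$ from above by $1$ or from below by $\delta$ at any stage, so that $\tilde{C}$ remains independent of $\delta$.
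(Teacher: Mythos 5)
Your proposal is correct and follows essentially the same route as the paper's proof: test with $u\zeta^2$, discard the nonnegative quartic term, use ellipticity and boundedness of $A$, and absorb the gradient term via Young's inequality, keeping the weight $w$ throughout so that $\tilde{C}$ depends only on $\lambda$, $\Lambda$ and the cutoff constant. The only differences are cosmetic (choice of the Young parameter and the resulting explicit constant).
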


 \begin{proof}
  Let $\chi\in \mathcal{C}_0^\infty(B_{2R})$, $0\leq\chi\leq1$ in $B_{2R} $, $\chi\equiv1$ in $B_{R}$ and  $|\nabla\chi|\leq\displaystyle{\frac{C}{R}}$ on $B_{2R}$.

  In the weak formulation take as test function $\varphi=u\chi^2$

 $$-\int_{B_{2R}}div (wA\nabla u)u\chi^2 +\int_{2B}wu^4\chi^2=0$$

i.e., integrating by parts,

$$-\int_{\partial B_{2R}}wA\nabla u \cdot\nu u\chi^2 +\int_{B_{2R}}w\nabla u\,A\nabla(u\chi^2)+\int_{B_{2R}}wu^4\chi^2=0$$

where the first term is $=0$ because ot the definition of $\chi$. Then we have

 \[
\int_{B_{2R}} w\nabla u A\nabla u \chi^2+\int_{B_{2R}} w \nabla u\,A\,\nabla\chi^2 u+\int_{B_{2R}}w\,u^4\chi^2=0
 \]

using ellipticity condition and boundedness of $A$ and of throwing away the last term (that is $\geq0$) , we get

\[
\lambda\int_{B_{2R}} w|\nabla u|^2 \chi^2\leq \int_{B_{2R}}2 w|\nabla u A\nabla \chi|\;|u|\;\chi\leq2\Lambda\int_{B_{2R}} w|\nabla u|\;|\nabla \chi|\;|u|\chi
\]
 and using Young's inequality the latest is

 \[
 \leq \epsilon \int_{B_{2R}}w \;|\nabla u|^2\;\chi^2 + \frac{2\Lambda}{\epsilon}\int_{B_{2R}}w\;|u|^2\,|\nabla\chi|^2
 \]

from which, using the properties of $\chi$,

\[
(\lambda-\epsilon)\int_{B_R} w\,|\nabla u|^2\leq(\lambda-\epsilon)\int_{B_{2R}}w\,|\nabla u|^2\chi^2\leq \frac{2\Lambda C^2}{\epsilon \;R^2}\int_{B_{2R}}w\;|u|^2
\]

Taking for instance $\epsilon=\frac{\lambda}{2}$ the theorem is proved with $\displaystyle{\tilde{C}=\frac{8\Lambda \;C^2}{\lambda^2}}.$
 \end{proof}
 \bigskip
 
{\bf Proof of Proposition \ref{convudeltan}}\\
  \begin{proof}
  For seek of clarity we divide the proof in several steps.\\
{\it First step.} We start proving some weak convergence results. Let $\{v_{\delta_n}\}_{n\geq 1}$ be a given sequence of elements in $X_{0,1}$ converging in $L^1(\Omega)$ as $\delta_n\rightarrow 0$ to an element $v$.Then by Lemma \ref{closeX} it follows that $v\in X_{0,1}$ i.e. $v=\chi_D$ with $D\in\mathcal{D}$.  
 Thus, $v_{\delta_n}\rightarrow v:=\chi_{\Omega_D}$ in $L^1(\Omega)$,  a.e. in $\Omega$ and also,
\begin{equation}\label{adeltanconv}
\sqrt{a_{\delta_n}}\rightarrow \chi_{\Omega_D},\,\, a_{\delta_n}:=a_{\delta_n}(v_n) = \delta_n + (1-\delta_n) v_n\rightarrow \chi_{\Omega_D},\text{ a.e. in }\Omega \text{ and in } L^p(\Omega)
\end{equation}
for any $p\in [1,\infty]$.
Consider now $u_{\delta_n}$ solution of Problem  \ref{eq:probclassic} for $v=v_{\delta_n}$.
Then from \eqref{estimateudelta}, \eqref{estimategradudelta}, we know that the sequences $\{ \sqrt{a_n} u_n \}$ and $\{ \sqrt{a_n} \nabla u_n \}$ are uniformly bounded, respectively in $L^2(\Omega)$ and in $L^2(\Omega; \R^2)$; so, possibly up to a subsequence,
\begin{align}
\sqrt{a_{\delta_n}} u_{\delta_n} &\rightharpoonup \tilde{u} \text{ in } L^2(\Omega)    \label{eq:weaku} \\
\sqrt{a_{\delta_n}} \nabla u_{\delta_n} &\rightharpoonup V \text{ in } L^2(\Omega; \R^2)    \label{eq:weakgradu}
\end{align}

{\it Second step.} In this part we will show that the weak limits $\tilde{u}$ and of $V$ are a.e. equal to zero inside the cavity $D$. In fact,  observe that for any $B_{2R} (y)\subset D$ we have $a_{\delta_n}\rightarrow0$ a.e. in $B_{2R} (y)$ and by dominated convergence theorem

\begin{equation}\label{B2R0}
\int_{B_{2R}}  a_{\delta_n}\;u_{\delta_n}^2 \rightarrow 0,
\end{equation}
and so $$\sqrt{a_{\delta_n}}\;u_{\delta_n}\rightarrow 0$$ in $L^2(B_{2R}(y)$. By uniqueness of the (weak) limit, from (\ref{eq:weaku}), we deduce that 
$
\tilde{u}\equiv 0,\,\ \text{a.e. in } B_{2R}(y)
$
and by the arbitrariness of $y$ it follows
\begin{equation}\label{uincavity}
\tilde{u}\equiv 0,\,\ \text{a.e. in } D.
\end{equation}
In order to conclude a similar result for $V$  we apply the Cacciopoli type inequality \eqref{eq:caccioppoli2}
\begin{equation}
\int_{B_R} a_{\delta_n} |\nabla u_{\delta_n}|^2 \leq C \int_{B_{2R}} a_{\delta_n} u_{\delta_n}^2.
    \label{eq:Caccioppoli}
\end{equation}
which entails
\[
\sqrt{a_{\delta_n}}\;\nabla u_{\delta_n}\rightarrow0\; \text{in}\;L^2\left(B_R(y),\R^2\right)
\]
and which implies 
\begin{equation}\label{Vincavity}
V\equiv\vec{0},\,\ \text{a.e.} \ \ D
\end{equation}
Hence, by the fact that $a_{\delta_n}\rightarrow \chi_D$ a.e. in $\Omega$ and by (\ref{eq:weaku}) and \eqref{eq:weakgradu})  we also have that 
\begin{align}
a_{\delta_n} u_{\delta_n} &\rightharpoonup \tilde{u} \text{ in } L^2(\Omega)    \label{eq:weak_u2} \\
a_{\delta_n} \nabla u_{\delta_n} &\rightharpoonup V \text{ in } L^2(\Omega; \R^2)    \label{eq:weak_gradu2}
\end{align}

 {\it Third step.} In this part of the proof we will show that $\tilde{u}\in H^1(\Omega_D)$ and that $\nabla\tilde{u}=V$ a.e. in $\Omega_D$. Fix $a>0$ and define the set $D^a\equiv\{x\in \Omega \,|\, dist(x,D)\leq a\}$ and let $N_0:=N_0(a)$ be such that for $n\geq N_0$ $D_{n}\subset D^a$ and $a_{\delta_n}=1$ in $\Omega_{ D^a}$. Then by (\ref{estimateudelta}) and (\ref{estimategradudelta}) the following uniform estimate holds
  \begin{equation}\label{estimatesinDa}
 \|u_{\delta_n}\|_{H^1(\Omega_{ D^a})}\leq C
 \end{equation}
 which implies that, possibly up to a subsequence, that for some $U^a\in H^1(\Omega_{D^a})$  
 \begin{equation}\label{weakH1convtoU}
 u_{\delta_n}\rightharpoonup U^a \textrm{ in } H^1(\Omega_{D^a})
 \end{equation}
 and therefore strongly in $L^2(\Omega_{D^a})$  
\begin{equation}\label{strongconv}
u_{\delta_n}\rightarrow U^a\textrm{ in } L^2(\Omega_{D^a}).
\end{equation}
Now, from \ref{eq:weaku} and recalling that $a_{\delta_n}=1$ in $\Omega_{ D^a}$ for $n>N_0$ we can infer that
\begin{equation}
\int_{\Omega}u_{\delta_n}\varphi  \rightarrow\int_{\Omega}U^a\varphi 
\end{equation}
 for any  $\varphi\in L^2(\Omega)$ such that $\varphi=0$ in $\Omega\backslash\Omega_{ D^a}$. So, 
 $$
 \int_{\Omega_{ D^a}}U^a\varphi=\int_{\Omega_{ D^a}}\tilde{u}\varphi,\,\,\forall\varphi\in L^2(\Omega)
 $$
 which implies that
 $$
 U^a=\tilde{u}|_{\Omega_{ D^a}}
 $$
 and hence 
 \begin{equation}\label{gradequality}
 \nabla(U^a)=\nabla(\tilde{u}|_{\Omega_{ D^a}})\textrm{ in }L^2(\Omega_{D^a},\mathbb{R}^2).
 \end{equation}
 Let now $\varphi\in H^1(\Omega)$ (observe that $\varphi\in H^1(\Omega_{ D^a})\, \forall a>0$). From \ref{eq:weaku} and \ref{eq:weakgradu} it follows that for $n>N_0$
 $$
 \int_{\Omega}(\sqrt{a_{\delta_n}} \nabla u_{\delta_n}\cdot\nabla\varphi+\sqrt{a_{\delta_n}} u_{\delta_n}\varphi)
 =\int_{\Omega_{ D^a}\cup (D_a\backslash D)\cup (D\backslash D^a)}(\sqrt{a_{\delta_n}} \nabla u_{\delta_n}\cdot\nabla\varphi+\sqrt{a_{\delta_n}} u_{\delta_n}\varphi)
 $$
 Hence, setting
 \[
 \int_{D^a\backslash D}(\sqrt{a_{\delta_n}} \nabla u_{\delta_n}\cdot\nabla\varphi+\sqrt{a_{\delta_n}} u_{\delta_n}\varphi)=\epsilon_n(a)
 \]
 and observing that by (\ref{eq:weaku}), (\ref{eq:weakgradu}) and (\ref{uincavity}), (\ref{Vincavity}) one has that
 \[
 \int_{D\backslash D^a}(\sqrt{a_{\delta_n}} \nabla u_{\delta_n}\cdot\nabla\varphi+\sqrt{a_{\delta_n}} u_{\delta_n}\varphi)=o(1).
 \]
 we can write
 $$
 \int_{\Omega}(\sqrt{a_{\delta_n}} \nabla u_{\delta_n}\cdot\nabla\varphi+\sqrt{a_{\delta_n}} u_{\delta_n}\varphi)
 =\int_{\Omega_{ D^a}}(\sqrt{a_{\delta_n}} \nabla u_{\delta_n}\cdot\nabla\varphi+\sqrt{a_{\delta_n}} u_{\delta_n}\varphi)+\epsilon_n(a)+o(1).
 $$
 Then again by (\ref{eq:weaku}) and (\ref{eq:weakgradu}) we can write 
 \begin{equation}\label{relationtildeu1}
 \int_{\Omega}(V\cdot\nabla\varphi+\tilde{u}\varphi)+o(1)
 =\int_{\Omega_{ D^a}}( \nabla(\tilde{u}|_{\Omega_{ D^a}})\cdot\nabla\varphi+ \tilde{u}\varphi)+\epsilon_n(a).
 \end{equation}
 and by (\ref{uincavity}) and (\ref{Vincavity}) this last relation also implies that
 \begin{equation}\label{relationtildeu2}
 \int_{\Omega_D}(V\cdot\nabla\varphi+\tilde{u}\varphi)+o(1)
 =\int_{\Omega_{ D^a}}( \nabla(\tilde{u}|_{\Omega_{ D^a}})\cdot\nabla\varphi+ \tilde{u}\varphi)+\epsilon_n(a).
 \end{equation}
 Finally, let us pick up $a=a_n\rightarrow 0$ as $n\rightarrow \infty$ in (\ref{relationtildeu2}) and consider a sequence $\tilde{u}_n\in H^1(\Omega_D)$ such that $\tilde{u}_n|_{\Omega_{D^{a_n}}}=\tilde{u}$ and with $\tilde{u}_n\rightarrow \tilde{u}$ in $L^2(\Omega_D)$. Then the integral on the right-hand side of (\ref{relationtildeu2}) can be written in the form
 \[
 \int_{\Omega_{ D^{a_n}}}( \nabla\tilde{u}_n\cdot\nabla\varphi+ \tilde{u}_n\varphi)=\int_{\Omega_D}( \nabla(\tilde{u}_n)\cdot\nabla\varphi+ \tilde{u}_n\varphi)+\tilde{\epsilon}_n(a_n).
 \]
 We observe that by using Schwartz inequality (and the uniform estimate \eqref{estimatesinDa} we have that $\epsilon_n(a_n),\tilde{\epsilon}_n(a_n)$ both converge to zero as $n\rightarrow \infty$. Hence,  
 \[
  \int_{\Omega_D}(V\cdot\nabla\varphi+\tilde{u}\varphi)=\lim_{n\rightarrow \infty}
  \int_{\Omega_D}( \nabla \tilde{u}_n\cdot\nabla\varphi+ \tilde{u}_n\varphi)
 \]
i.e. $\tilde{u}_n\rightharpoonup\tilde{u}$ in $H^1(\Omega_D)$ and $\nabla\tilde{u}=V$.\\
{\it Fourth step.} We now show that $\tilde{u}$ is the solution of the cavity problem \eqref{probcav} i.e.
 \\
\begin{equation}\label{equtil}
\int_{\Omega_D} \nabla \tilde{u} \cdot\nabla \varphi + \int_{\Omega_D} \tilde{u}^3\varphi = \int_{\Omega_D} f\;\varphi  \ \ \ \ \  \textrm{for} \ \ \varphi\in H^1(\Omega_D)
\end{equation}

Consider the weak formulations for $u_{\delta_n}$

\begin{equation}\label{equdeltan}
\int_\Omega a_{\delta_n}\nabla u_{\delta_n}\cdot \nabla \tilde{\varphi} + \int_\Omega a_{\delta_n} u_{\delta_n}^3\tilde{\varphi} = \int_\Omega f\;\tilde{\varphi } \ \ \ \ \  \textrm{for} \ \ \tilde{\varphi}\in H^1(\Omega)
\end{equation}

Consider $\varphi\in H^1(\Omega_D)$ and extend it to $\tilde{\varphi}\in H^1(\Omega)$. Then subtracting \eqref{equtil} from \eqref{equdeltan} we obtain

\begin{equation}\label{equdiffer}
\int_{\Omega_D} (a_{\delta_n}\nabla u_{\delta_n} - \nabla\tilde{u} )\cdot\nabla {\varphi} + \int_{\Omega_D} (a_{\delta_n} u_{\delta_n}^3-\tilde{u} ^3){\varphi}+\int_D a_{\delta_n}\nabla u_{\delta_n}\cdot \nabla \tilde{\varphi} + \int_D a_{\delta_n} u_{\delta_n}^3\tilde{\varphi}  = 0
\end{equation}

Because of the convergence results collected in the previous steps  all the terms in \eqref{equdiffer} tend to $0$ as $\delta_n\rightarrow0$.

{\it Step 5. } Let us finally prove the convergence of the traces in $L^2$ i.e.
  $$\|u_{\delta_n}-\tilde{u}\|_{L^2(\Sigma)}\rightarrow 0.$$

 In \eqref{equtil} and \eqref{equdeltan} take the test function $\varphi=(u_{\delta_n}-\tilde{u})\chi^2\in H^1(\omd)$ where $\chi$ is the cutoff function such that $0\leq\chi\leq1$ in $\omd$, $\chi=1$ in $\Omega_{d_0/2}$, $\chi=0$ in $\omd\setminus\Omega_{d_0}$ and $|\nabla\chi|\leq\frac{C}{d_0}$.

 Plugging $\varphi$ into \eqref{equdeltan} and in \eqref{equtil} and subtracting the two equations, recalling that $supp (f)\subset\Omega_{d_0}$,  we obtain

 \begin{equation}\label{eqdiff}
 \int_{\Omega_{d_0}}\nabla(u_{\delta_n}-\tilde{u})\cdot\nabla[(u_{\delta_n}-\tilde{u})\chi^2]+\int_{\Omega_{d_0}}(u_{\delta_n}^3-\tilde{u}^3)(u_{\delta_n}-\tilde{u})\chi^2=0
 \end{equation}

 i.e.

 \begin{equation}\label{eqdiff2}
 \int_{\Omega_{d_0}}|\nabla(u_{\delta_n}-\tilde{u})|^2\chi^2+2\int_{\Omega_{d_0}}\nabla(u_{\delta_n}-\tilde{u})\nabla\chi(u_{\delta_n}-\tilde{u})\chi+\int_{\Omega_{d_0}}(u_{\delta_n}-\tilde{u})^2(u_{\delta_n}^2+u_{\delta_0}\tilde{u}+\tilde{u}^2)\chi^2=0
 \end{equation}

 Applying Young's inequality to the second term in \eqref{eqdiff2} we obtain that

 \begin{equation}\label{eqdiff3}
 \left|\int_{\Omega_{d_0}}\nabla(u_{\delta_n}-\tilde{u})\cdot\nabla\chi(u_{\delta_n}-\tilde{u})\chi\right|\leq \epsilon\int_{\Omega_{d_0}}|\nabla(u_{\delta_n}-\tilde{u})|^2\chi^2+\frac{1}{\epsilon}\int_{\Omega_{d_0}}|\nabla\chi|^2|u_{\delta_n}-\tilde{u}|^2
 \end{equation}

 and combining the above back in \eqref{eqdiff2}, reordering terms, using the properties of $\chi$ and the $L^\infty$ estimates on $\tilde{u}$ and $u_{\delta_n}$ we get

 \[
 \int_{\Omega_{d_0}}|\nabla(u_{\delta_n}-\tilde{u})|^2\chi^2\leq C\int_{\Omega_{d_0}}|u_{\delta_n}-\tilde{u}|^2
 \]

which implies

 \[
 \int_{\Omega_{d_0/2}}|\nabla(u_{\delta_n}-\tilde{u})|^2\leq C\|u_{\delta_n}-\tilde{u}\|_{L^2(\Omega_{d_0})}^2
 \]

 and therefore,
 using the fact that $v_{\delta_n}=1$ a.e. in $\Omega_{d_0}$, the fact that $a_{\delta_n}=1$ a.e. in $\Omega_{d_0}$ and the convergences proved above, that

 $$\|u_{\delta_n}-\tilde{u}\|_{H^1(\Omega_{d_0/2})}\leq C\|u_{\delta_n}-\tilde{u}\|_{L^2(\Omega_{d_0})}\rightarrow 0.$$

 Finally, by the trace inequality we conclude that

 $$\|u_{\delta_n}-\tilde{u}\|_{L^2(\Sigma)}\rightarrow 0.$$
 concluding the proof.

 \end{proof}

\end{document}